\def\dvg{{\rm div}}
\def\xdif{{\rm d}}
\def\bs{\boldsymbol}
\newcommand{\R}{\mathbb R}
\newcommand{\ds}{\displaystyle}
\def\p{\bs p}
\def\M{\bs m^{\epsilon}}
\def\K{\bs K}
\def\A{\bs A}
\def\h{\bs h^{\epsilon}}
\def\mpsi{\bs \psi}
\def\dep{\partial}
\def\m{\bs m}
\def\e{\bs e}
\def\curl{{\rm curl\ }}
\newtheorem{theorem}{\bf Theorem}[section]
\newtheorem{proposition}{\bf Proposition}[section]
\newtheorem{corollary}{\bf Corollary}[section]
\newtheorem{remark}{\bf Remark}[section]
\newtheorem{definition}{\bf Definition}[section]
\newtheorem{lemma}{\bf Lemma}[section]
\begin{document}

\title{Homogenized phase transition model for perforated ferromagnetic media}
\author{
Catherine Choquet$^{1}$, Mohamed Ouhadan$^{1,2}$ and Mouhcine Tilioua$^{2}$}

\address{$^{1}$MIA Laboratory, University of La Rochelle, La Rochelle, France\\
$^{2}$MAMCS Team, FST Errachidia, University of Meknes, Errachidia, Morocco\\
\orcidlink{} CC, \href{https://orcid.org/0000-0003-0632-0358}{0000-0003-0632-0358}, MO, \href{https://orcid.org/0000-0002-9117-0448}{0000-0002-9117-0448}, MT, \href{https://orcid.org/0000-0003-1983-9669}{0000-0003-1983-9669} }

\subject{mathematical
	modelling, applied mathematics}

\keywords{periodic homogenization, ferromagnetic materials, phase transition, two scale convergence.}

\corres{Catherine Choquet\\
\email{cchoquet@univ-lr.fr}}

\begin{abstract}
	This work presents a rigorous prediction of the effective equations governing the paramagnetic-ferromagnetic phase transition in a perforated three-dimensional body. Assuming a periodic distribution of perforations, we investigate the asymptotic behavior of solutions to the equations describing the thermodynamic and electromagnetic properties of the material as the period of the microstructure tends to zero. The microscopic model is a phase-field model within the Ginzburg-Landau framework for second-order phase transitions, where the phase-field is directly related to the magnetization vector. This model couples a nonlinear equation for the magnetization with the quasi-static Maxwell system and another nonlinear equation for the temperature. The primary mathematical challenge lies in homogenizing these equations, which exhibit a complex doubly non-linear structure. Additionally, the extension operators used within the homogenization framework precludes the application of standard Aubin--Lions compactness arguments. Our analysis employs two-scale convergence in conjunction with a two-scale decomposition based on an appropriate dilation operator. The nonlinearities are primarily addressed by means of a variant of compensated compactness and a Vitali compactness argument. From the perspective of practical applications, this work enables the explicit calculation of a Curie temperature tensor, capturing at the macroscopic scale the coupled effect of the material's geometric structure and its magnetic permeability tensor.
\end{abstract}

\maketitle

\section{Introduction}	

The theory of periodic homogenization focuses on partial differential equations whose coefficients exhibit periodic oscillations with a small period, denoted by $\epsilon$. The main objective of this theory is to derive a homogenized equation that describes the global behavior of the system as the oscillation period tends to zero. In other words, it seeks to establish an equivalent partial differential equation, whose solutions represent the weak limits of the original equation's solutions as $\epsilon$ decreases. For a comprehensive introduction to this theory, as well as general surveys, the works \cite{bensoussan-lions-papanicolaou,cioranesco-donato,jik,mar,tar} provide essential references. 
A major milestone in this theory was the introduction of the concept of two-scale convergence by \cite{nguestseng}, which paved the way for the analysis of more complex systems. In its initial version, this convergence, explored in \cite{nguestseng,allaire}, is referred to as ``weak two-scale convergence". However, it became crucial to define a more robust form of convergence, known as ``strong two-scale convergence", as presented in the works \cite{vin,mie}. This strong convergence is particularly valuable for dealing with fully nonlinear problems, such as nonsmooth elastoplasticity, discussed in \cite{nes,vin1,sch,gia,han}, and also allows for more efficient numerical approximation of such problems, as mentioned in \cite{mat}.

The present work applies the concepts of  two-scale convergence to a phenomenon of  nonlinear phase transitions, which play a significant role in condensed matter physics. This area is rich with examples of such transitions, including magnetic, ferroelectric, superfluid, and superconducting transitions. We are interested in phase transitions in ferromagnetism, which refers to the ability of certain materials to become magnetized in the presence of an external magnetic field. In this state, the magnetic moments align in the same direction as the field, and even after the field is removed, part of this alignment is retained. However, when the temperature exceeds a specific threshold, known as the Curie temperature $\theta_{c}$, the residual alignment disappears, and the material reverts to the paramagnetic phase. The transition from the paramagnetic to the ferromagnetic state is modelled as a second-order phase transition (see \cite{brokate} and \cite{landau}).
For the mathematical model, our starting point is the work of Berti et al. in \cite{berti} where a system of partial differential equations describing the coupled behavior of magnetization, temperature and magnetic field is provided.

We are interested in ferromagnetic materials with periodic perforated structure. 
The characteristic size of the holes is assumed to be small, denoted by $\epsilon$. As emphasized in the seminal paper \cite{cioranescu-saint}, the homogenization process in this context involves letting $\epsilon$ tend to zero. This presents two key challenges: (1) handling the rapid oscillations within the material due to the presence of the holes, and (2) accounting for the changes in the domain structure as the number of holes increases.	In this work we shall make use of the two-scale convergence method as described in \cite{allaire,pavliotis-stuart}. We also adopt the formalism introduced by Visintin in \cite{vin} which may be viewed as the use of an appropriate dilation operator as in \cite{arbo,choquet1}.

The paper is organized as follows: 
The perforated geometry  and  the model for paramagnetic-ferromagnetic  phase transitions are described in Section \ref{setting}. 
We also introduce various notations. 
\textcolor{blue}{Since we essentially use properties of two-scale convergence, the basics of this concept are recalled in Section \ref{sec!}, together with other auxiliary tools.  }
The main result is stated in Section  \ref{sec3}.
It is the effective model describing the phase transition, justified by the analysis of the asymptotic behavior of the solutions of the micromodel in a periodically perforated domain. 
The rest of the paper is devoted to the proof of the main result. 
In Section \ref{sec4}, the necessary uniform estimates are established. 
Section \ref{sec5} is finally dedicated to the homogenization process, {\it i.e.}, the passage to the limit in the different equations.

\section{Setting of the problem and preliminary results}  
\label{setting}
\subsection{Notations}\label{notata}

	Let $\Omega$ be a bounded domain in $\R^{3}$  with a smooth boundary $\partial\Omega$ and $Y = (0,1)^{3}$ the reference cell  of
periodicity in the auxiliary space  $\R^{3}$. 
The canonical basis of $\R^{3}$ is $(e_{1}, e_{2}, e_{3})$.
Throughout the paper, the small parameter $\epsilon$  takes its values in a sequence of positive real numbers tending to zero.
Let $\mathcal{H}$, the reference hole, be an open subset of $Y$ with a smooth boundary
$\partial \mathcal{H}$ and set $Y^{\ast}=Y\setminus \bar{\mathcal{H}}$. 
For a simple  illustration see Figure \ref{figure1}.	
\textcolor{blue}{However our results may apply to the general case of periodic holes which may not be isolated, as in \cite{allaire1}.}
We set
\begin{equation*}\label{Do}
	\bar{\mathcal{H}}^{\epsilon}=\bigcup_{k\in\mathbb{Z},\epsilon(k+Y)\subset\Omega}\epsilon(k+\bar{\mathcal{H}}), 
	\qquad
	\Omega^{\epsilon}=\Omega\setminus \bar{\mathcal{H}}^{\epsilon} .
\end{equation*}
\textcolor{blue}{Also, for shortening notations when using integration, we introduce the sets} $\Omega^{\epsilon}_{T}=(0,T)\times \Omega^{\epsilon}$ and  $\Omega_{T}=(0,T)\times \Omega$ for $T > 0$. 
Let $\nu$ (resp. $\nu^\epsilon$) be the outer unit normal at the boundary $\dep\Omega$ (resp. $\dep\Omega^{\epsilon}$).
\begin{figure}[pb]
	{\centering
		\includegraphics[width=3in]{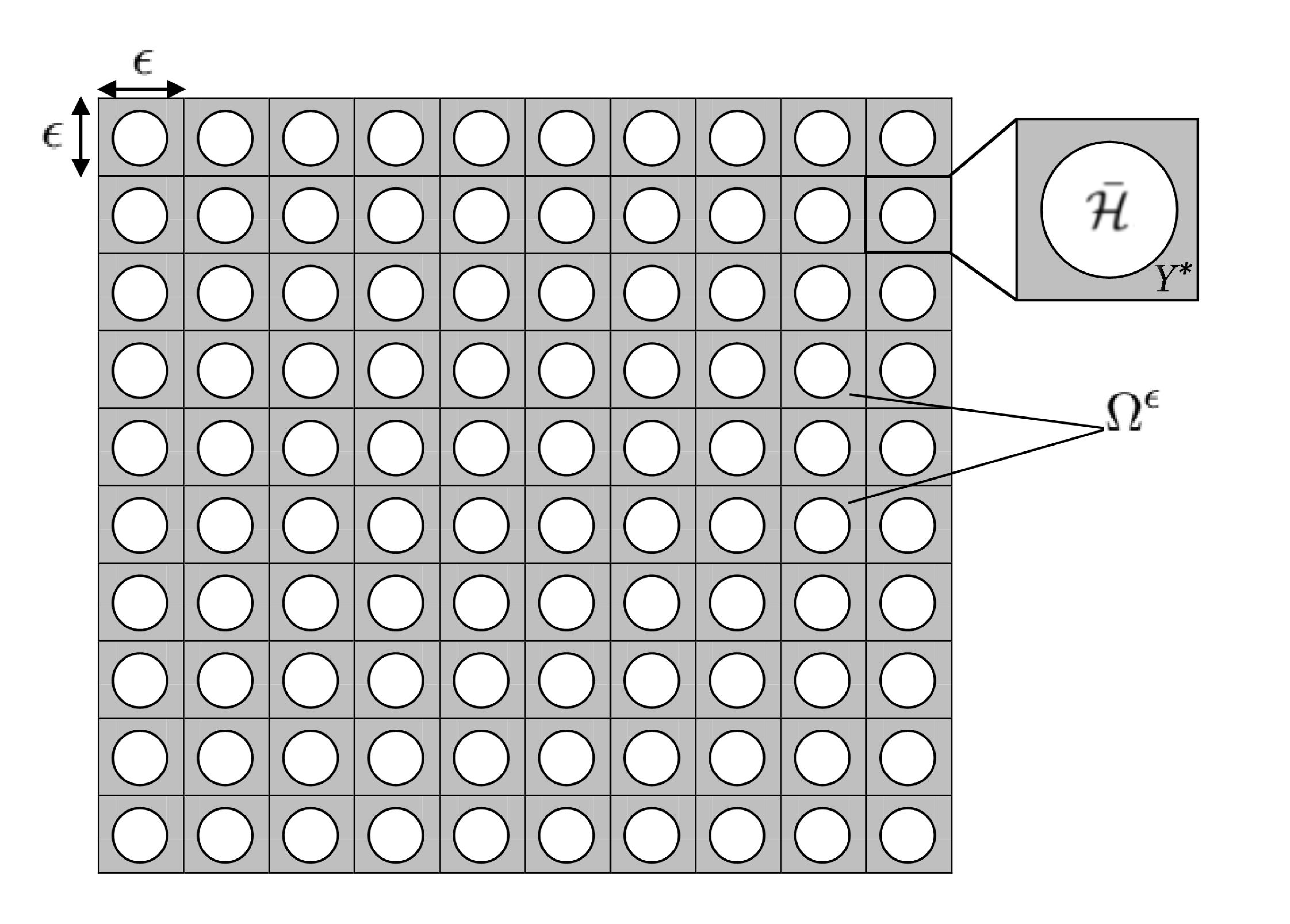}
		\caption{Example of perforated domains $\Omega^{\epsilon}$ and $Y^{\ast}$ in 2-D.}}
	\label{figure1}
\end{figure}

 \textcolor{blue}{Throughout the paper, we denote by $\chi_O$ the characteristic function of a set $O \subset \mathbb{R}^3$. We define the mean value $\bar{\chi}$  of the function $\chi_{Y^\ast}$} on $Y$:
$$\bar{\chi}=\mathcal{M}_{Y}(\chi_{Y^\ast}) = \int_Y \chi_{Y^\ast}(y)\, \xdif y .$$

We now introduce some functional spaces and other notations for the mathematical framework of our problem.
We denote by $C^{\infty}_{\sharp}(Y)$ the set of infinitely differentiable real functions that are $Y$-periodic in each of the three space variables. 
Space $H^{1}_{\sharp}(Y)$ is the closure of $C^{\infty}_\sharp(Y)$ in $H^{1}(Y)$.
Also  $C^{\infty}(\overline{\Omega})\otimes C^{\infty}_{\sharp}(Y)$  denotes the set containing all infinitely differentiable real functions over $\overline{\Omega}\times\R^{3}$ that are $Y$-periodic in the three last variables. 

For any normed vector space $Z$,  $\|\cdot\|_{Z}$ denotes the $Z$-norm. 
To simplify notations, if all the entries of a matrix-valued function $\mathbf{f}$ : $\R^d \to \R^{p\times q}$ belong to a functional space $Z$, we write $\mathbf{f} \in Z$ instead of  $\mathbf{f} \in Z^{p\times q}$. 
Capital $C$  represents various nonnegative real numbers independ of $\epsilon$.

\medskip

Let $\A(x,y)=(A_{ij}(x,y)),\K(x,y)=(K_{ij}(x,y))$, $\bs{\mu}(x,y)=(\mu_{ij}(x,y))$, $1\leq i,j\leq3$, \textcolor{blue}{be three symmetric positive definite matrix fields with values in $\mathbb{R}^{3 \times 3}$. We assume that $\A$ and $\K$ belong to the class $L^{\infty}(\Omega; C_{\sharp}(Y))$, and that $\bs{\mu} \in L^{\infty}(\mathbb{R}^3; C_{\sharp}(Y))$.
} 
We assume also that $\A$, $\K$ and $\bs{\mu}$ are coercive: there exist $\alpha_{1},\alpha_{2},\alpha_3>0$ such that 
\begin{equation*}\label{hy1}
	\A \xi \cdot \xi \geq \alpha_{1}  \vert \xi \vert^2, 
	\quad
	\K \xi \cdot \xi \geq \alpha_{2}  \vert \xi \vert^2, 
	\quad
	\bs{\mu}  \xi \cdot \xi \geq \alpha_{3}  \vert \xi \vert^2,
\end{equation*}
for all $\xi$ in $\R^{3}$, for all $y \in Y$, for almost any $x$ in $\Omega$  or in $\mathbb{R}^3 $.
We set $\A^{\epsilon}(x)=\A\big(x,\frac{x}{\epsilon}\big)$, $\K^{\epsilon}(x)=\K\big(x,\frac{x}{\epsilon}\big)$, $\bs{\mu}^{\epsilon}(x)=\bs{\mu}\big(x,\frac{x}{\epsilon}\big)$.

\subsection{Microscopic model}

Let us now describe the model equations. We assume that a ferromagnetic material occupies the domain $\Omega^{\epsilon}$.
At the microscopic scale, we adapt the model derived in  the paper \cite{berti} of Berti {\it et al.}  (see also Belov \cite{belov}).  
It combines phenomenological constitutive equations for the magnetization $\M$, the magnetic field $\h$ and the absolute temperature $\theta^{\epsilon}$. 
We enrich somehow the model proposed in \cite{berti} by introducing some anisotropy in the problem  (about anisotropy, see also Remark \ref{rem_anis} below).
More precisely we consider two heterogeneity scales in the ferromagnetic material. 
First, using $\A$, $\K$ and $\bs{\mu}$ defined before,  we replace scalar coefficients by matrices. 
Then, most importantly, microscopic heterogeneity is modelled by oscillations, here assumed of period of order $\epsilon$, thus the introduction of $\ds\A^{\epsilon}$,  $\K^\epsilon$ and $\bs{\mu}^{\epsilon}$.

On the one hand, the temperature $\theta^{\epsilon}$ is ruled by the following equation
\begin{equation}\label{new_depart}
	\partial_t (c(\theta^{\epsilon})) - \theta^{\epsilon}  \M \cdot \partial_t \M - \mathrm{div} (\K^{\epsilon}(x)k(\theta^{\epsilon}) \nabla \theta^{\epsilon}) =  r,
\end{equation}
where the functions  $c(\theta^{\epsilon})$ and $k(\theta^{\epsilon})$ are respectively the thermal conductivity and the specific temperature of the material and $\K^{\epsilon}$ is a thermal diffusion matrix that depends on the characteristics and geometry of the material. 
\textcolor{blue}{Studies on phase transitions consider various laws for heat conductivity and specific heat (see, for example, \cite{berti2, brokate}). Here, we focus on the case studied by Berti {\it et al.}  \cite{berti}, where heat conductivity and specific heat depend on the absolute temperature according to polynomial laws}, as follows:
\begin{equation}\label{heatlaw}
	c(\theta^{\epsilon}) = c_1\theta^{\epsilon} + c_2(\theta^{\epsilon})^2/2, \qquad k(\theta^{\epsilon})=k_0 + k_1 \theta^{\epsilon},
\end{equation}
where $c_1,c_2,k_0,k_1>0$.
Like \cite{berti} also, we develop the derivatives in \eqref{new_depart}  and neglect the quadratic terms in $\partial_i \theta^{\epsilon}$.
Dividing by $\theta^\epsilon$, we get 
\begin{equation}\label{depart1}
	c_{1}\dep_t\ln(\theta^{\epsilon})+c_{2}\dep_t\theta^{\epsilon} -   \M \cdot \partial_t \M - \mathrm{div} (\K^{\epsilon}\nabla(k_0\ln(\theta^{\epsilon}) +k_1 \theta^{\epsilon})) = \hat r
\end{equation}
with $\ds\hat r=r/\theta^{\epsilon}$. 
For simplicity,  assume  $\hat{r}\equiv0$.

On the other hand, the evolution of the ferromagnetic material is characterized by the magnetization $\M$ which depends on the temperature $\theta^\epsilon$ as follows: 
\begin{equation}\label{model1}
	\gamma\dep_t \M=\dvg(\A^{\epsilon}\nabla\M)-\theta_{c}(|\M|^{2}-1)\M-\theta^{\epsilon}\M+\h~~~~\mbox{in}~\Omega^{\epsilon}_{T} .
\end{equation}
 \textcolor{blue}{The gyromagnetic ratio} 
$\gamma$ is a given positive real number and $\theta_{c}$ is the Curie temperature. 
The magnetic field $\h$ is the stray field that appears in the Maxwell's equations. The magnetization $\M$ links the magnetic field $\h$ and the magnetic
induction $\mathbf{B}^{\epsilon}$ by the relation 
$\mathbf{B}^{\epsilon} = \bs{\mu}^{\epsilon}\h + \chi^{\epsilon}\M,$ 
where $\bs{\mu}^{\epsilon}$ represents the magnetic permeability. 
The magnetic field $\h$ satisfies $\curl\h = 0$ according to static Maxwell's equations, and, according to  Faraday law,  $\dvg\mathbf{B}^{\epsilon} = 0$. 
Hence the magnetization $\M$ induces a magnetic field $\h$ which is given by
\begin{equation*}\label{model3}
	\begin{array}{ll}
		\curl(\h) = 0 , \quad \nabla \cdot \left( \boldsymbol{\mu}^\epsilon \h + \chi_{\Omega^\epsilon} \M \right) = 0  & \text{in } \mathbb{R}_+ \times \mathbb{R}^3,\\
		\left( \boldsymbol{\mu}^\epsilon \h + \chi_{\Omega^\epsilon} \M \right) \cdot \nu^\epsilon = 0 & \text{on } (0,T) \times \partial\Omega^\epsilon.
	\end{array}
\end{equation*}
The magnetic field $\h$ thus derives from a scalar magnetic potential $\varphi^{\epsilon}$,  $\h=\nabla\varphi^{\epsilon}$, which satisfies
\begin{equation}\label{model4}
	\left\{\begin{array}{ll} 
	\dvg(\bs{\mu}^{\epsilon}\nabla\varphi^{\epsilon} + \chi_{\Omega^\epsilon}\M)=0 ~~~~\mbox{in} ~\R_+ \times \R^{3},\\
		{\color{blue}(\bs{\mu}^{\epsilon}\nabla\varphi^{\epsilon} + \chi_{\Omega^\epsilon}\M)\cdot\nu^\epsilon=0~~\mbox{ on } (0,T)\times\dep\Omega^{\epsilon}}.
	\end{array}\right.
\end{equation}
%
%
%
%
%
%
 Combining equations \eqref{depart1}-\eqref{model4} completed by appropriate initial and boundary conditions,  we get the following problem ruling the evolution of the ferromagnetic material:
\begin{subequations}\label{modelfit}
	\begin{flalign}
		&\gamma\dep_t \M=\dvg(\A^{\epsilon}\nabla\M)-\theta_{c}(|\M|^{2}-1)\M-\theta^{\epsilon}\M+\nabla\varphi^{\epsilon}\hspace{1.3cm}\mbox{ in }\Omega^{\epsilon}_{T}, &
		\label{1.a} 
		\\
		&c_{1}\dep_{t}\ln(\theta^{\epsilon})+c_{2}\dep_t\theta^{\epsilon}-\M\cdot\dep_t\M=\dvg(\K^{\epsilon}\nabla(k_0 \ln(\theta^{\epsilon}) +k_1 \theta^{\epsilon}))\hspace{0.5cm}\mbox{ in } \Omega^{\epsilon}_{T},&
		\label{1.b} \\
		&\dvg(\bs{\mu}^\epsilon\nabla\varphi^{\epsilon} + \chi_{\Omega^\epsilon}\M)=0 \hspace{4.8cm}\mbox{ in }(0,T)\times\R^{3},&
		\label{1.c} 
		\\
		&	(\A^{\epsilon}\nabla\M)\cdot\nu=0   , \quad
		(\K^{\epsilon}\nabla(k_0 \ln(\theta^{\epsilon}) +k_1 \theta^{\epsilon}))\cdot \nu^\epsilon = 0   \hspace{0.6cm} \mbox{ on } (0,T)\times \dep\Omega^{\epsilon} ,&
		\label{1.d} 
		\\
		&(\bs{\mu}^\epsilon\nabla\varphi^{\epsilon}+ \chi_{\Omega^\epsilon}\M)\cdot\nu^\epsilon=0 \hspace{4.8cm} \mbox{ on } (0,T)\times\dep\Omega^{\epsilon} ,&
		\label{1.e} 
		\\
		&	\M(x,0)=\M_0(x),\ \theta^{\epsilon}(x,0)=\theta^{\epsilon}_{0}(x)  \hspace{3.2cm}\mbox{in} ~\Omega^{\epsilon}.&
		\label{1.f} 
	\end{flalign}
\end{subequations}
Additionally,  the initial magnetic field $\nabla\varphi^\epsilon_0$ is also ruled by the quasi-static Maxwell system:
\begin{equation}\label{inmag}
	\left\{\begin{array}{cc}
		\ds\dvg(\bs{\mu}^\epsilon\nabla\varphi^{\epsilon}_0 + \chi_{\Omega^\epsilon}\M_0)=0 &\mbox{ in }\R_+ \times \R^{3},\\
		\ds(\bs{\mu}^\epsilon\nabla\varphi^{\epsilon}_0+ \chi_{\Omega^\epsilon}\M_0)\cdot\nu^\epsilon=0 &  \mbox{ on } \R_+ \times \dep\Omega^{\epsilon}.
	\end{array}\right.
\end{equation}

The mathematical analysis developed by Berti {\it et al.} in \cite{berti} (see also Tilioua in \cite{tilioua}) extents straightforward to the microscopic problem \eqref{modelfit}.
Introduce the functional $\mathcal{E}^{\epsilon}(t)$ defined as
\begin{equation*}\label{feng1}
	\begin{array}{ll}
		\mathcal{E}^{\epsilon}(t)=\ds\frac{1}{2}\Big[\int_{\Omega^{\epsilon}} \A^{\epsilon}\nabla
		\M\cdot\nabla\M \ \xdif x+\frac{\theta_{c}}{2}\|\M\|^{4}_{L^{4}(\Omega^{\epsilon})}+c_{2}\|\theta^{\epsilon}\|^{2}_{L^{2}(\Omega^{\epsilon})}\\[2ex]
		~~~~~~~~~~~~~~~~~~\ds+\int_{\R^3} \bs{\mu}^{\epsilon}\nabla
		\varphi^{\epsilon}\cdot\nabla\varphi^{\epsilon}\ \xdif x+2c_{1}\ds\int_{\Omega^{\epsilon}}\theta^{\epsilon} \ \xdif x\Big], \quad t>0,
	\end{array}
\end{equation*}
and, for time  $t = 0$,
\begin{equation*}\label{feng0}
	\begin{array}{ll}
		\mathcal{E}^{\epsilon}(0)=\ds\frac{1}{2}\Big[\int_{\Omega^{\epsilon}} \A^{\epsilon}\nabla
		\M_{0}\cdot\nabla\M_{0}\ \xdif x+\frac{\theta_{c}}{2}\|\M_{0}\|^{4}_{L^{4}(\Omega^{\epsilon})}+c_{2}\|\theta^{\epsilon}_{0}\|^{2}_{L^{2}(\Omega^{\epsilon})}\\[2ex]
		~~~~~~~~~~~~~~~~~~\ds+\int_{\R^3} \bs{\mu}^{\epsilon}\nabla
		\varphi_0^{\epsilon}\cdot\nabla\varphi_0^{\epsilon}\ \xdif x +2c_{1}\ds\int_{\Omega^{\epsilon}}\theta^{\epsilon}_{0}\ \xdif x\Big] .
	\end{array}
\end{equation*}
The following existence result holds true.
\begin{theorem}[Global existence]{\normalfont (Berti {\it et al.} \cite{berti}, Tilioua \cite{tilioua})}\label{exist}
	Let $\M_0\in H^1(\Omega^{\epsilon})$ and $\theta_0^\epsilon \in L^2(\Omega^\epsilon)$.
	Under the aforementioned hypothesis, for every $T > 0$,
	Problem \eqref{modelfit} admits a weak solution $(\M,\theta^{\epsilon},\varphi^{\epsilon})$ such that 
	$$
	\begin{array}{ll}
		\M\in L^{2}(0,T;H^{2}(\Omega^{\epsilon})) \textcolor{blue}{\cap L^\infty(0,T;L^4(\Omega^\epsilon))}  \cap H^{1}(0,T;L^{2}(\Omega^{\epsilon})),\\[2ex]
		\theta^{\epsilon}\in L^{2}(0,T;H^{1}(\Omega^{\epsilon})),~\ln(\theta^{\epsilon})\in L^{2}(0,T;H^{1}(\Omega^{\epsilon})),\\[2ex]
		c_{1}\ln(\theta^{\epsilon})+c_{2}\theta^{\epsilon} \in H^{1}(0,T;(H^{1}(\Omega^{\epsilon}))^{'}),\\[2ex]
		\nabla\varphi^{\epsilon}\in L^{\infty}(0,T;L^{2}(\R^{3}\setminus \bar{\mathcal{H}}^{\epsilon})).
	\end{array}
	$$
	Moreover the following energy estimate holds for all $t \in (0,T)$:
	\begin{equation}\label{energy-estimate}
		\mathcal{E}^{\color{blue}\epsilon}(t)+\ds\int_0^t\Big(\frac{\gamma}{2}\|\dep_t\M\|_{L^{2}(\Omega^{\epsilon})}^2+\ds\int_{\Omega^{\epsilon}}\K^{\epsilon}\nabla(k_0 \ln(\theta^{\epsilon}) +k_1 \theta^{\epsilon})\cdot\nabla\theta^{\epsilon} \ \xdif x\Big)\ \xdif s
		\leq \eta_{1}{\cal E}^{\color{blue}\epsilon}(0)+\eta_{2}
	\end{equation}
	where $\eta_{1}$ and $\eta_{2}$ are suitable nonnegative real numbers depending on $T$.
\end{theorem}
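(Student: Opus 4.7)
The plan is to construct solutions of \eqref{modelfit} by a Faedo--Galerkin procedure combined with a regularization of the logarithmic nonlinearity, and to identify the energy estimate as a consequence of a careful combination of testings dictated by the structure of $\mathcal{E}^\epsilon$. Since the result is essentially the extension to the perforated domain $\Omega^\epsilon$ of the analysis carried out in \cite{berti,tilioua}, the only domain-dependent subtlety is that the coercivity constants of $\A^\epsilon$, $\K^\epsilon$, $\bs{\mu}^\epsilon$ do not depend on $\epsilon$, so the argument can be run at fixed $\epsilon$ with $\Omega^\epsilon$ playing the role of the fixed domain in the cited works (the $\epsilon$-uniform bounds will be addressed later in Section \ref{sec4}).

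The first step is to introduce a Galerkin basis adapted to the Neumann boundary conditions on $\dep \Omega^\epsilon$ for both $\M$ and $\theta^\epsilon$, and to regularize the singular term by replacing $\ln(\theta^\epsilon)$ with $\ln(\theta^\epsilon + \delta)$ for $\delta > 0$. At each Galerkin level $n$, equations \eqref{1.a}--\eqref{1.b} form a system of ODEs for the coefficients; \eqref{1.c} is solved at each time by Lax--Milgram on a suitable quotient space on $\R^3$, thanks to the coercivity of $\bs{\mu}^\epsilon$. The second step is to derive the energy estimate by testing \eqref{1.a} with $\partial_t\M_n$, \eqref{1.b} with $\theta^\epsilon_n$, and using the time derivative of \eqref{1.c} tested with $\varphi^\epsilon_n$. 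Summing these three identities, the cross term $\int \theta^\epsilon_n\,\M_n\cdot\partial_t\M_n$ cancels, and so does the magnetostatic coupling $\int \nabla\varphi^\epsilon_n\cdot\partial_t\M_n=-\tfrac12\partial_t\!\int\bs{\mu}^\epsilon|\nabla\varphi^\epsilon_n|^2$, yielding exactly the structure of $\mathcal{E}^\epsilon$ together with the dissipation terms $\tfrac{\gamma}{2}\|\partial_t\M\|^2_{L^2}$ and $\int \K^\epsilon\nabla(k_0\ln(\theta^\epsilon)+k_1\theta^\epsilon)\cdot\nabla\theta^\epsilon$, the latter being automatically nonnegative because it equals $(k_0/\theta^\epsilon+k_1)\K^\epsilon\nabla\theta^\epsilon\cdot\nabla\theta^\epsilon$. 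The lower-order term $-\frac{\theta_c}{2}\|\M\|^2_{L^2}$, of unfavorable sign, is absorbed by the quartic part via Young's inequality and handled by a Gronwall argument, producing the constants $\eta_1,\eta_2$.

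The third step is to pass to the limit $n\to\infty$. Weak compactness gives limits in the spaces claimed in the theorem; strong convergence of $\M_n$ in $L^2(0,T;L^4(\Omega^\epsilon))$ follows from Aubin--Lions, since $\M_n$ is bounded in $L^2(0,T;H^2)\cap H^1(0,T;L^2)$, which handles the Ginzburg--Landau nonlinearity $(|\M|^2-1)\M$ and the coupling $\theta^\epsilon\M$. The term $\M\cdot\partial_t\M=\tfrac12\partial_t|\M|^2$ in \eqref{1.b} is reinterpreted as a time derivative to avoid testing compactness of $\partial_t\M$ against the temperature. Passing to the limit $\delta\to 0$ requires a minimum principle for $\theta^\epsilon$: testing \eqref{1.b} with a negative cut-off of $\theta^\epsilon$, and using that the magnetization source $\M\cdot\partial_t\M$ can be controlled a posteriori, one shows $\theta^\epsilon>0$ a.e., which makes $\ln(\theta^\epsilon)$ meaningful and closes the regularization.

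The main obstacle will be precisely this passage to the limit in the doubly nonlinear temperature equation \eqref{1.b}: one has to recover simultaneously the strong convergence of $\ln(\theta^\epsilon_{n,\delta})$ and of $\theta^\epsilon_{n,\delta}$ and to pass to the limit in the product $\M_n\cdot\partial_t\M_n$ appearing as a source, while preserving the strict positivity of $\theta^\epsilon$. The standard way out, as carried out in \cite{berti,tilioua}, is to rewrite $\M_n\cdot\partial_t\M_n=\tfrac12\partial_t|\M_n|^2$ and exploit the strong $L^2$ convergence of $|\M_n|^2$ coming from the Aubin--Lions step, combined with a monotonicity argument (Minty) for the $\ln$ nonlinearity; all other terms are either linear or benefit from the strong $L^2$ convergence of $\M_n$. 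This concludes the existence statement and validates the uniform bound \eqref{energy-estimate}, which will be the starting point for the homogenization analysis.
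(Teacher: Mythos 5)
Your proposal is correct and follows essentially the same route as the paper: the paper delegates the construction of weak solutions to Berti \emph{et al.} and Tilioua (which proceed by the Galerkin/regularization scheme you sketch), and its own derivation of the energy estimate (Lemma \ref{lem11}) is exactly your second step — testing \eqref{1.a} with $\partial_t\M$, \eqref{1.b} with $\theta^\epsilon$, using the time-differentiated \eqref{1.c} tested with $\varphi^\epsilon$ to turn $\int\nabla\varphi^\epsilon\cdot\partial_t\M$ into $-\tfrac12\frac{\xdif}{\xdif t}\int\bs{\mu}^\epsilon\nabla\varphi^\epsilon\cdot\nabla\varphi^\epsilon$, cancelling the $\theta^\epsilon\M\cdot\partial_t\M$ cross terms, and closing with Young and Gr\"onwall. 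The only cosmetic difference is bookkeeping of the lower-order term ($-\theta_c\int\M\cdot\partial_t\M$ absorbed via $\theta_c^4/8\gamma+C\|\M\|_{L^4}^4+\tfrac{\gamma}{2}\|\partial_t\M\|_{L^2}^2$ in the paper versus your absorption into the quartic energy), which leads to the same constants $\eta_1,\eta_2$.
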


{\color{blue}\begin{remark}
The assumption that the initial magnetic field satisfies \eqref{inmag} is introduced to ensure the physical compatibility of the initial data with the magnetostatic constraint, and to formulate the coupled evolution problem from the outset. Mathematically, this assumption guarantees that the magnetic potential $\varphi^\epsilon_0$ is well-defined, so that the total energy $\mathcal{E}^\epsilon$ is finite at the initial time $t = 0$, provided that $\M_0 \in L^2(\Omega^\epsilon)$. 
Indeed, following Amrouche \textit{et al.}~\cite{amrouche}, we introduce the weighted Sobolev space $ W_{bp}(\mathbb{R}^{3} \setminus \bar{\mathcal{H}}^{\epsilon})$, adapted to our geometry, and referred to as the Beppo-Levi space, defined by
	\begin{gather*}
		W_{bp}(\R^{3}\setminus\bar{\mathcal{H}}^{\epsilon})
		=\Big\{\omega\in \mathcal{D}^{'}(\R^{3}\setminus\bar{\mathcal{H}}^{\epsilon})\mbox{ such that }\nabla\omega\in L^{2}(\R^{3}\setminus\bar{\mathcal{H}}^{\epsilon})
		\mbox{ and}~\frac{\omega(x)}{\sqrt{1+|x|^{2}}}\in L^{2}(\R^{3}\setminus\bar{\mathcal{H}}^{\epsilon})\Big\},
	\end{gather*}
	and equipped with scalar product
	$\ds(\omega,\psi)_{W_{bp}(\R^{3}\setminus\bar{\mathcal{H}}^{\epsilon})}=\int_{\R^{3}\setminus\bar{\mathcal{H}}^{\epsilon}}\nabla\omega\cdot\nabla\psi\ \xdif x$.
	Using Lax-Milgram theorem (see, for example, \cite{rep}), one proves that, for any $\M_0$  given in $L^{2}(\Omega^\epsilon)$, Problem \eqref{inmag} has a unique solution in $ W_{bp}(\mathbb{R}^{3} \setminus \bar{\mathcal{H}}^{\epsilon})$.
	\\
Moreover, in order for $\mathcal{E}^\epsilon$ to be finite at time $t = 0$, it is also necessary that $\nabla \M_0 \in L^2(\Omega^\epsilon)$ and $\M_0 \in L^4(\Omega^\epsilon)$. Since the Sobolev space $H^1(\Omega^\epsilon)$ embeds compactly into $L^4(\Omega^\epsilon)$, it is  natural to assume that $\M_0 \in H^1(\Omega^\epsilon)$.
\end{remark}}
\begin{remark}\label{rem_anis}
	Let us provide some details regarding how we account for anisotropy in the model.
	According to Berti {\it et al.}  \cite{berti}, \eqref{1.a} should be replaced by
	$$ \gamma\dep_t \M = \dvg(\A^{\epsilon}\nabla\M) - \theta_{c}f_1'(\M) - \theta^{\epsilon}f_2'(\M)+\nabla\varphi^{\epsilon} $$
	with $f_1$ and $f_2$ in the form
	$$ f_1(\m) = \frac{1}{4} (\mathbb{F} \m \otimes \m)\cdot \m \otimes \m - f_2(\m), \quad f_2(\m) = \frac{1}{2} \mathcal{F} \cdot (\m \otimes \m)$$
	where $\mathbb{F}$ and $\mathcal{F}$ are fourth-order and second-order positive definite tensors, respectively. 
	The isotropic assumption consists in choosing for  $\mathbb{F}$ and $\mathcal{F}$ the identity tensor, thus recovering Eq. \eqref{1.a}, and in replacing $\A$ by a scalar function.
	In the present work, we use $\A$ for taking into account the anisotropy but we keep the isotropic form of $f_1$ and $f_2$.
	This latter choice is for the sake of clarity in the computations. We emphasize that the characteristic nonlinearities are represented by this choice. 
	The only important assumption in the general setting with $f_1$ and $f_2$ would be that the existence result in Theorem \ref{exist} remains true.
\end{remark}

\subsection{Reformulation of the homogenization problem}

We now aim  to study the asymptotic behavior of Problem \eqref{modelfit}  as $\epsilon$ tends to 0 in order to rigorously derive the corresponding homogenized model.
Classically the difficulty lies  in the nonlinear terms. 
Here the double nonlinearity in \eqref{1.b} makes the passage to the limit significantly complex. It is worth noting that this equation can also be written in the following form:
\textcolor{blue}{
\begin{equation*}
	\ds \dep_{t}F_1(\theta^{\epsilon})-\M\cdot\dep_t\M=\dvg F_2\left(x,\frac{x}{\epsilon},\theta^{\epsilon},\nabla \theta^{\epsilon}\right),
\end{equation*}}
 \textcolor{blue}{with $F_1 : \mathbb{R}_+ \to \mathbb{R}$ and $F_2 : \mathbb{R}^3 \times \mathbb{R}^3 \times \mathbb{R}_+ \times \mathbb{R}^3 \to \mathbb{R}^3$ defined by	$$F_1(s) = c_1 \ln(s) + c_2 s, \quad \text{and} \quad
 	F_2(x, y, s, \bs{\xi}) = \mathbf{K}(x, y) \left( \frac{k_0}{s} + k_1 \right) \bs{\xi}.$$
}
In general, the homogenization of this type of equations is based on certain conditions imposed on $F_i, \ i = 1, 2$. We cite, for example, \cite{Jian} for the case of homogenization of parabolic equations in fixed domains. We  also mention \cite{Nan1} and \cite{Nan2} for the case of perforated domains with Dirichlet and Neumann boundary conditions, respectively. \textcolor{blue}{In particular,  common assumptions imposed on $F_1$ are: (i) Function $F_1$ is continuous and non-decreasing, with $F_1(0) = 0$; (ii) There exists a constant $\delta > 0$ such that, for every $r$ and $S$ satisfying $0 < r < S$, there exists a constant $C(r, S) > 0$ such that
	$
	|F_1(s_1) - F_1(s_2)| \geq C(r, S) |s_1 - s_2|^\delta
	$
	for all $s_1, s_2 \in [-S, S]$ with $r < |s_1| < R$.
Assumptions (i) and (ii) play a crucial role in characterizing the homogenized model because they allow for strong convergence of the solution. For instance (i) is essential for proving the strong convergence of  $F_1$ valued in the zero extension of the solution (the nature of this extension varies depending on the context: in \cite{Nan1} and \cite{Nan2}, it refers to the extension by zero inside the perforated regions, whereas in \cite{Jian}, it concerns the extension outside a set defined in the proof of Lemma 1.9 in \cite{alt}).
 In our case,  $F_1$ do not satisfy (i) nor (ii), of course due to  the logarithmic function. }
  \textcolor{blue}{In brief, our concern is the possibly very singular behaviour of the logarithmic function that characterizes the time dynamics, instead of the nonlinearity or the possible degenerescence of the time derivative.} Therefore, we adopt a different strategy to pass to the limit.

The first step is a reformulation of the problem. 
Let $F$ be the function defined in $\R_{+}^*$ by 
\begin{equation}\label{fonction}
	F(x) = c_1\ln(x) + c_2x.
\end{equation}
It is clear that $F$ is bijective, as it is strictly increasing (injective) and its image covers all of $\R$ (surjective). Let $G = F^{-1}$, which is defined on $\R$ and is valued in $\R_{+}^*$.
Set
\begin{equation}\label{tra10}
	\theta^{\epsilon}=G(v^{\epsilon}), \quad v^\epsilon=c_{1}\ln(\theta^{\epsilon})+c_{2}\theta^{\epsilon}=F(\theta^\epsilon).
\end{equation} 
Notice that
\begin{equation}\label{tra20}
	\nabla(k_0 \ln(\theta^{\epsilon}) +k_1 \theta^{\epsilon})=g(\theta^\epsilon)\nabla v^\epsilon, \ \mbox{with }\ \ds g(x)=\dfrac{k_0+k_{1}x}{c_1+c_{2}x},\ \forall x>0.
\end{equation}
Hence, from  \eqref{tra10}-\eqref{tra20}, Problem \eqref{modelfit} is rewritten as
\begin{subequations}\label{modelfit1}
	\begin{flalign}
		& \gamma\dep_t \M=\dvg(\A^{\epsilon}\nabla\M)-\theta_{c}(|\M|^{2}-1)\M-G(v^{\epsilon})\M+\nabla\varphi^{\epsilon}\hspace{0.5cm}\mbox{ in }\Omega^{\epsilon}_{T},&
		\label{2.a}
		\\
		&\ds	\dep_{t}v^{\epsilon}-\M\cdot\dep_t\M=\dvg(\K^{\epsilon}g(G(v^{\epsilon}))\nabla  v^{\epsilon})\hspace{3.55cm}\mbox{ in } \Omega^{\epsilon}_{T},&
		\label{2.b}
		\\
		&	\dvg(\bs{\mu^{\epsilon}}\nabla\varphi^{\epsilon} + \chi_{\Omega^\epsilon}\M)=0 \hspace{4.86cm}\mbox{ in }(0,T) \times\R^{3},&
		\label{2.c}
		\\
		& (\A^{\epsilon}\nabla\M)\cdot\nu^\epsilon=0   ,\quad
		(	\K^{\epsilon}g(G(v^{\epsilon})\nabla  v^{\epsilon})\cdot \nu^\epsilon = 0   \hspace{1.32cm}  \mbox{ on } (0,T)\times \dep\Omega^{\epsilon}, &
		\label{2.d}
		\\
		&	(\bs{\mu^{\epsilon}}\nabla\varphi^{\epsilon}+ \chi_{\Omega^\epsilon}\M)\cdot\nu^\epsilon=0 \hspace{4.5cm} \mbox{ on } (0,T)\times\dep\Omega^{\epsilon} ,&
		\label{2.e}
		\\
		&\M(x,0)=\M_0(x),\ v^{\epsilon}(x,0)=v^{\epsilon}_{0}(x) \hspace{4.6cm}\mbox{in} ~\Omega^{\epsilon}.&
		\label{2.f}
	\end{flalign}
\end{subequations}
\textcolor{blue}{Function $F$ defined in \eqref{fonction} and relations \eqref{tra10}-\eqref{tra20} are presented here as a computational trick. Note that the same idea can be expressed in terms of linear operators and bilinear forms, involving an inversion of the special operator at the time derivative (see {\it e.g.}, Sanchez-Palencia \cite{Sanchez} Chapter 6, Section 4).} 
%
\section{Auxiliary tools}\label{sec!}
\textcolor{blue}{
The aim of the paper is now to pass to the limit $\epsilon \to 0$ in Problem \eqref{modelfit1}.
This section compiles the principal convergence analysis tools that will be utilized subsequently.
We begin by recalling in Subsection \ref{sub2} the definition of two-scale convergence, along with some fundamental results related to this notion. For a more detailed presentation, we refer the reader to the works of Allaire \cite{allaire} and Pavliotis and Stuart \cite{pavliotis-stuart} (see in particular Subsection 2.5.2). 
The strength of this method lies in its natural ability to capture the microscopic scale during the limiting process
However, passing to the limit in the nonlinear terms of Problem \eqref{modelfit1} requires a slightly modified formalization of the standard two-scale convergence method. 
We make use of a dilation operator, as proposed in \cite{arbo}. In this framework, we also adapt the two-scale decomposition method introduced in \cite{vin} to handle time-dependent problems. This approach, commonly referred to in the literature as the unfolding method (see \cite{ciora}), requires an extension of the classical framework to simultaneously account for both spatial and temporal variations.
It is described in Subsection \ref{2svi}.
The unfolding method transforms the problem into a fixed domain. It is thus renowned for allowing the use of standard notions of weak and strong convergence. However, we will primarily use it to concisely describe the limiting processes in the nonlinear terms. The essential compactness argument will first be obtained directly within the framework of two-scale convergence, using a compensated compactness argument between different types of solution extensions. We rely on a result by Moussa, recalled in Subsection \ref{mouSs}.}

\subsection{ Two-scale convergence}\label{sub2}
\textcolor{blue}{In this subsection, we present the two-scale convergence results needed for the asymptotic analysis of our problem. }
\begin{definition}\label{def}
	A function $f\in L^{2}(\Omega\times Y)$ is admissible if
	\begin{enumerate}
		\item the sequence $f^{\epsilon}(x)=f(x,x/\epsilon)$ is uniformly bounded in $L^{2}(\Omega)$;
		\item 
		$\ds \lim_{\epsilon\rightarrow 0}\int_{\Omega^{\epsilon}}\Big|f\Big(x,\frac{x}{\epsilon}\Big)\Big|^{2}\xdif x=\int_{\Omega\times Y}|f(x,y)|^{2}\xdif y\xdif x .$
	\end{enumerate}
\end{definition}
\begin{remark}
	The functional spaces $L^{2}(\Omega;C_{\sharp} (Y))$, $C(\Omega;C_{\sharp} (Y))$ and $L^{2}_{\sharp}(\Omega;C (Y))$ are spaces of admissible functions,
	which identify with dense subspaces of $ L^{2}(\Omega\times Y)$.
	Notice that $\A$, $\K$ and $\bs{\mu}$ are admissible functions.
\end{remark}
The two-scale convergence is designed for capturing high frequency oscillations of the micro-scale by using resonance with the oscillations in admissible test functions.
\begin{definition} A sequence $u^{\epsilon}\in L^{2}(\Omega_{T})$ two-scale converges to $u^{0}\in L^{2}(\Omega_{T}\times Y)$, denoted in the sequel
	$$   u^\epsilon    \overset{2}{\rightharpoonup} u^0 , $$ 
	if for every test function $\phi\in L^{2}(\Omega_{T};\mathcal{C}_{\sharp}(\mathrm{Y}))$ 
	$$\lim_{\epsilon\rightarrow 0}\int_{\Omega_{T}}u^{\epsilon}(t,x)\phi(t,x,\frac{x}{\epsilon})\ \xdif x\xdif t=\int_{\Omega_{T}}\int_{Y}u^{0}(t,x,y)\phi(t,x,y)\ \xdif y\xdif x\xdif t.$$
	
\end{definition}
Fundamental  properties are the following.
\begin{proposition}\label{t-s}
	\begin{enumerate}
		\item From any \textcolor{blue}{uniformly (w.r.t. $\epsilon$)} bounded sequence $(u^\epsilon)$  in $L^2(\Omega_T)$, one can extract a subsequence that two-scale converges.
		\item\label{p2} If $(u^{\epsilon})$ is a bounded sequence in $L^{2}(\Omega_{T})$, which two-scale converges to $u^0\in L^{2}(\Omega_{T}\times Y)$, then 
		\begin{equation*}
			\liminf_{\epsilon \to 0}\|u^{\epsilon}\|_{L^{2}(\Omega_{T})}\geq\|u^{0}\|_{L^{2}(\Omega_{T}\times Y)}.
		\end{equation*}
		\item Let $(u^{\epsilon})$ be a bounded sequence in $H^{1}(\Omega_{T})$ which converges weakly to $u^{0}$ in $H^{1}(\Omega_{T})$. Then  $ u^\epsilon    \overset{2}{\rightharpoonup} u^0$ and there exists a function $u_{1}\in L^{2}(\Omega_{T};H^{1}_{\sharp}(Y))$ such that, up to a subsequence, $\nabla u^{\epsilon}\overset{2}{\rightharpoonup} \nabla_{x} u^{0}+\nabla_{y} u^{1}$.
		\item \label{pr}  Let $(u^{\epsilon})$ be a sequence in $L^{2}(\Omega_{T})$ such that $ u^\epsilon    \overset{2}{\rightharpoonup} u^0$ in $L^{2}(\Omega_{T}\times Y)$. Assume that
		$$\lim\limits_{\epsilon \rightarrow 0}\|u^{\epsilon}\|_{L^{2}(\Omega_{T})}=\|u^{0}\|_{L^{2}(\Omega_{T}\times Y)}.$$
		Then $ u^\epsilon$ strongly two-scale converges to $u^0$ \textcolor{blue}{(denoted in the sequel 
			$u^\epsilon\overset{2}{\to} u^0$)} in the following sense: for any sequence $(v^{\epsilon})\subset L^{2}(\Omega_{T})$  such that $ v^\epsilon    \overset{2}{\rightharpoonup} v^0$ in $L^{2}(\Omega_{T}\times Y)$, and for any bounded and admissible function $\phi$, one has:
		$$\!\! \lim\limits_{\epsilon \rightarrow 0}\int_{\Omega_{T}} \!\!\!\! u^{\epsilon}(t,x)v^{\epsilon}(t,x)\phi(t,x,\frac{x}{\epsilon})\ \xdif x\xdif t=\int_{\Omega_{T}\times Y} \!\!\!\!  u^{0}(t,x,y)v^{0}(t,x,y)\phi(t,x,y)\ \xdif y\xdif x\xdif t.$$
	\end{enumerate}
\end{proposition}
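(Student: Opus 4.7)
The plan is to establish the four items along the lines of the original arguments of Nguetseng and Allaire, with only minor adjustments to accommodate the space-time variable on $\Omega_T$. No new idea is required for items 1--3; item 4 is the most delicate and constitutes the main obstacle, because it combines two two-scale converging sequences together with an admissible multiplier.

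For item 1, the approach is Riesz representation on the separable space of admissible test functions. For each $\epsilon$ define the linear form
\[
L_\epsilon(\phi) = \int_{\Omega_T} u^\epsilon(t,x)\,\phi\!\left(t,x,\frac{x}{\epsilon}\right)\xdif x\xdif t, \qquad \phi\in L^2(\Omega_T;C_\sharp(Y)).
\]
Admissibility of $\phi$ together with Cauchy-Schwarz and the uniform bound on $u^\epsilon$ gives $|L_\epsilon(\phi)|\le C\|\phi\|_{L^2(\Omega_T;C_\sharp(Y))}$, so that, along a subsequence, $L_\epsilon\to L$ in the weak-$*$ sense. Density of admissible functions in $L^2(\Omega_T\times Y)$ allows $L$ to be extended to a bounded linear functional on $L^2(\Omega_T\times Y)$, whose Riesz representative is the required $u^0$. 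For item 2, expand the non-negative quantity $\int_{\Omega_T}|u^\epsilon-\phi(\cdot,\cdot/\epsilon)|^2\xdif x\xdif t$ and pass to the limit, using admissibility to identify $\lim\int|\phi(\cdot,\cdot/\epsilon)|^2\xdif x\xdif t$ with $\|\phi\|^2_{L^2(\Omega_T\times Y)}$; this yields $\liminf\|u^\epsilon\|^2\ge 2\langle u^0,\phi\rangle_{L^2(\Omega_T\times Y)}-\|\phi\|^2_{L^2(\Omega_T\times Y)}$, and it remains to take $\phi\to u^0$ along admissible approximants.

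For item 3, apply item 1 separately to $u^\epsilon$ and to each component of $\nabla u^\epsilon$ to obtain, along a subsequence, $u^\epsilon\overset{2}{\rightharpoonup}\tilde u^0$ and $\nabla u^\epsilon\overset{2}{\rightharpoonup}\bs\xi_0$ in $L^2(\Omega_T\times Y)$. The weak $H^1$-convergence of $u^\epsilon$ to $u^0$, combined with Rellich compactness, forces $\tilde u^0=u^0$, and in particular $u^0$ is independent of $y$. To identify $\bs\xi_0$, test the two-scale limit of $\nabla u^\epsilon$ against $\epsilon$-scaled fields $\epsilon\,\bs\phi(t,x,x/\epsilon)$ with $\bs\phi\in\mathcal D(\Omega_T;C^\infty_\sharp(Y))^3$: integration by parts in $x$ and passing to the limit forces ${\rm div}_y\bs\xi_0=0$ in $Y$ in the periodic sense. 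The standard orthogonal decomposition of periodic solenoidal fields then provides $u^1\in L^2(\Omega_T;H^1_\sharp(Y))$ with $\bs\xi_0=\nabla_x u^0+\nabla_y u^1$; the $\nabla_x u^0$ part is pinned down by testing against fields of the form $\bs\phi(t,x)+\epsilon\,\bs\psi(t,x,x/\epsilon)$ and matching with the weak $L^2$-limit of $\nabla u^\epsilon$.

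Item 4 is the main obstacle. The strategy is first to upgrade the hypothesis into
\[
\lim_{\epsilon\to 0}\|u^\epsilon-\tilde u(\cdot,\cdot/\epsilon)\|^2_{L^2(\Omega_T)}=\|u^0-\tilde u\|^2_{L^2(\Omega_T\times Y)}
\]
for every admissible $\tilde u$; this follows by expanding the square and combining the assumed norm convergence with the two-scale convergence of $u^\epsilon$ tested against $\tilde u$ and with admissibility of $\tilde u$. Given $v^\epsilon\overset{2}{\rightharpoonup} v^0$ and a bounded admissible $\phi$, approximate $u^0$ in $L^2(\Omega_T\times Y)$ by smooth admissible $\tilde u_n$, and split
\[
\int_{\Omega_T} u^\epsilon v^\epsilon\phi(\cdot,\cdot/\epsilon)\,\xdif x\xdif t = \int_{\Omega_T}(u^\epsilon-\tilde u_n(\cdot,\cdot/\epsilon))\,v^\epsilon\,\phi(\cdot,\cdot/\epsilon)\,\xdif x\xdif t + \int_{\Omega_T}\tilde u_n(\cdot,\cdot/\epsilon)\,v^\epsilon\,\phi(\cdot,\cdot/\epsilon)\,\xdif x\xdif t.
\]
Estimate the first integral via Cauchy-Schwarz using the $L^\infty$-bound on $\phi$, the uniform $L^2$-bound on $v^\epsilon$ (inherited from its two-scale convergence and item 2), and the upgraded norm convergence; pass to the two-scale limit in the second integral noting that $\tilde u_n\phi$ is admissible in the sense of Definition \ref{def}. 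A diagonal argument letting first $\epsilon\to 0$ and then $n\to\infty$ concludes.
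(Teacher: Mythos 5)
The paper itself does not prove this proposition: it is recalled as classical, with references to Allaire \cite{allaire} and Pavliotis--Stuart \cite{pavliotis-stuart}, so your reconstruction has to be measured against the standard Nguetseng--Allaire arguments. Items 1, 2 and 4 follow that classical route correctly: the weak-$*$ compactness/Riesz argument on $L^2(\Omega_T;C_\sharp(Y))$, the expansion of $\int_{\Omega_T}|u^\epsilon-\phi(\cdot,\cdot/\epsilon)|^2\,\xdif x\,\xdif t\ge 0$, and, for item 4, the upgrade $\lim_\epsilon\|u^\epsilon-\tilde u_n(\cdot,\cdot/\epsilon)\|_{L^2(\Omega_T)}=\|u^0-\tilde u_n\|_{L^2(\Omega_T\times Y)}$ followed by the splitting and a diagonal argument are exactly the classical proof of strong (corrector-type) two-scale convergence. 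One point you should make explicit in item 4: passing to the limit in $\int_{\Omega_T}\tilde u_n(\cdot,\cdot/\epsilon)\,v^\epsilon\,\phi(\cdot,\cdot/\epsilon)$ uses $\tilde u_n\phi$ as a test function for the two-scale convergence of $v^\epsilon$, whereas the definition only admits test functions in $L^2(\Omega_T;C_\sharp(Y))$; you need the (true, but not free) extension of two-scale convergence to bounded admissible test functions, or you must take $\tilde u_n$ continuous and argue that the product of a continuous function with a bounded admissible one is admissible.

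The genuine gap is in item 3. Testing against the $\epsilon$-scaled fields $\epsilon\,\bs\phi(t,x,x/\epsilon)$ and integrating by parts gives no information on $\bs\xi_0$: since $\nabla u^\epsilon$ is bounded in $L^2(\Omega_T)$, the left-hand side tends to $0$, and the computation only yields $\int_{\Omega_T}\int_Y u^0\,\mathrm{div}_y\bs\phi\,\xdif y\,\xdif x\,\xdif t=0$, i.e. the $y$-independence of $u^0$, which you already had. Moreover the conclusion you draw, $\mathrm{div}_y\bs\xi_0=0$, is false in general: for the structure you are trying to establish, $\bs\xi_0=\nabla_x u^0+\nabla_y u_1$, one has $\mathrm{div}_y\bs\xi_0=\Delta_y u_1$, which does not vanish; and if $\bs\xi_0$ were $y$-solenoidal, periodicity would force $u_1$ to be constant in $y$ and the corrector would disappear. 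The correct identification step is to test $\nabla u^\epsilon$ against \emph{unscaled} fields $\bs\phi(t,x,x/\epsilon)$ with $\mathrm{div}_y\bs\phi=0$ and $\int_Y\bs\phi\,\xdif y=0$: the $\epsilon^{-1}$ term then vanishes identically, the $\mathrm{div}_x$ term vanishes in the limit because the $y$-average of $\bs\phi$ is zero, and one obtains that $\bs\xi_0(t,x,\cdot)$ is orthogonal in $L^2_\sharp(Y)$ to all zero-mean solenoidal periodic fields; the periodic Helmholtz decomposition gives $\bs\xi_0=\nabla_y u_1+c(t,x)$, and $c=\nabla_x u^0$ follows by comparing $y$-averages with the weak $L^2(\Omega_T)$ limit of $\nabla u^\epsilon$. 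With this replacement your item 3 coincides with the classical argument, and the proposal as a whole is sound.
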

\subsection{Time-dependent two-scale decomposition}\label{2svi}

\textcolor{blue}{In this subsection, we introduce some notations that will be used to define two-scale convergence, following \cite{vin}. Let:}
$$
\begin{array}{ll}
	\hat{n}(x)= \max\{n \in \mathbb{Z} : n \leq x\}, \quad \hat{r}(x)= x - \hat{n}(x) \in [0, 1[ \quad \forall x \in \mathbb{R},\\[2ex]
	\mathcal{N}(x)= (\hat{n}(x_1),\hat{n}(x_2),\hat{n}(x_3)) \in \mathbb{Z}^3, \quad \mathcal{R}(x)= x - \mathcal{N}(x) \in Y \quad \forall x \in \mathbb{R}^3. 
\end{array}
$$
Thus, $x = \epsilon[\mathcal{N}(x/\epsilon) + \mathcal{R}(x/\epsilon)]$ for any $x \in \mathbb{R}^3$, $\epsilon>0$. 
The terms $\mathcal{N}(x/\epsilon)$ and $\mathcal{R}(x/\epsilon)$ can be interpreted as representing coarse-scale and fine-scale variables, respectively.
Set
\begin{equation}\label{trans} 
	\ds S^\epsilon(t,x, y) = \Bigl(t, \epsilon \mathcal{N}\Bigl(\frac{x}{\epsilon}\Bigr) + \epsilon y\Bigr), \quad \forall (t,x, y) \in (0,T) \times \mathbb{R}^3 \times Y . 
\end{equation} 
Definition \eqref{trans} is inspired by the dilation operator used in homogenization, as described in \cite{arbo,choquet1}. 
\textcolor{blue}{The operator $S^\epsilon$ is also called the unfolding operator (\cite{casa,ciora}).}
Notice  that $\ds S^\epsilon(t,x, y) = (t,x + \epsilon [y - \mathcal{R}(x/\epsilon)] )$ and
\begin{equation} \label{conu}
	\ds S^\epsilon(t,x, y) \to (t,x) \text{ uniformly in } (0,T) \times \mathbb{R}^3  \times Y \text{ as } \epsilon \to 0. 
\end{equation}
The transformation $S^\epsilon$  captures variations that occur on different scales. 
For any measurable function $\ds u: (0,T) \times \mathbb{R}^3  \times \mathbb{R}^3 \to \mathbb{R}$, where $y \mapsto u(t,x, y)$ is $Y$-periodic, one has 
\begin{equation}\label{tra} 
	\int_{0}^{T}\int_{\mathbb{R}^3} u\Bigl(t, x,\frac{x}{\epsilon}\Bigr)\  \xdif x  \xdif t = \int_{0}^{T}\int_{\mathbb{R}^3 \times Y} u(S^{\epsilon}(t,x, y),y)\  \xdif x  \xdif y \xdif t, 
\end{equation}
which allows reducing the two-scale convergence to the standard convergence in Lebesgue spaces.
The seminal result in \cite{vin} (also in \cite{casa,ciora}) is actually the equivalence
\begin{equation}
	u^\epsilon    \overset{2}{\rightharpoonup} u^0 \mbox{ in } L^2(\Omega_T\times Y) \Leftrightarrow  u^\epsilon \circ S^\epsilon \rightharpoonup u^0 \mbox{ weakly in } L^2(\Omega_T\times Y)  .
	\label{equ}
\end{equation}

\subsection{A variant of Aubin-Lions Lemma}\label{mouSs}

\textcolor{blue}{A classical tool for obtaining compactness results when dealing with evolution problems, is the Aubin-Lions argument. 
It does not apply here because  an estimate for a term in the form $\partial_t f^\epsilon$ does not {\it a priori} provide information about the temporal derivative of an extension of $f^\epsilon$.
And because we consider the Neumann problem on holes, we need corresponding extension theorems.
In this paper, we address this issue by establishing a kind of compensated compactness argument, that balances the behaviour of different types of extensions of the solutions. The first step is laid by the following result, adapted from \cite{moussa} by A. Moussa. }
We denote by $\mathcal{M}$ the space of Radon measures.
\begin{proposition} {\normalfont (Moussa \cite{moussa}, Prop. 3)}\label{mossa}
	Let $q \in [1, \infty]$, $ p \in [1, n)$, $ \alpha \in [1, np/(n-p))$, $m \in \mathbb{N}^*$.
	Set $r'=r/(r-1)$ if $r>1$ and $r'=\infty$ if $r=1$, $r=q,\alpha$.
	Assume that  $a_\epsilon$  and  $b_\epsilon$ are two sequences of functions weakly or weakly-$*$ convergent to $a$ and $b$ in  $L^q(0,T; W^{1,p}(\Omega))$ and $L^{q'}(0,T; L^{\alpha'}(\Omega))$, respectively.
	If $\dep_t b_\epsilon$ is bounded in $\mathcal{M}(0,T; H^{-m}(\Omega))$, 
	then
	\begin{equation}
		\ds \lim_{\epsilon \to 0} \int_{\Omega_T}a_\epsilon b_\epsilon\phi \xdif x \xdif t=  \int_{\Omega_T}a b\phi \xdif x \xdif t, \  \forall \phi \in \mathcal{C}((0,T)\times\overline{\Omega}).
	\end{equation}
\end{proposition}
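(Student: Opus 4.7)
The plan is to establish this compensated compactness result by combining a time mollification of $b_\epsilon$ with an Aubin-Lions-Simon argument built on a dual Rellich embedding. By density of smooth functions in $C([0,T]\times \overline{\Omega})$ and the uniform bound $\bigl|\int_{\Omega_T} a_\epsilon b_\epsilon \phi\,\xdif x\,\xdif t\bigr|\leq C\|\phi\|_\infty$ (which follows from H\"older and the Sobolev embedding $W^{1,p}\hookrightarrow L^\alpha$), one may assume $\phi$ smooth. The key spatial ingredient is Rellich's theorem: since $\alpha < np/(n-p)$, the embedding $W^{1,p}(\Omega) \Subset L^\alpha(\Omega)$ is compact, and dually $L^{\alpha'}(\Omega) \Subset (W^{1,p}(\Omega))'$. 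After possibly enlarging $m$ (harmless, since $\partial_t b_\epsilon$ then remains bounded as an $H^{-m'}$-valued measure for $m'\geq m$), one also has the continuous embedding $(W^{1,p})' \hookrightarrow H^{-m}$ through $H^m \hookrightarrow W^{1,p}$.

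The first technical step is the time regularization. Let $\rho_\eta \in C_c^\infty(\mathbb{R})$ be a standard mollifier supported in $[-\eta,\eta]$, extend $b_\epsilon$ outside $(0,T)$ by reflection, and set $b_\epsilon^\eta = \rho_\eta \ast_t b_\epsilon$. Writing
$$b_\epsilon(t) - b_\epsilon^\eta(t) = \int \rho_\eta(s)\bigl(b_\epsilon(t) - b_\epsilon(t-s)\bigr)\,\xdif s$$
and controlling the inner difference by the total variation of the $H^{-m}$-valued measure $\partial_t b_\epsilon$ on $[t-s,t]$, a Fubini argument first gives $\|b_\epsilon - b_\epsilon^\eta\|_{L^1(0,T; H^{-m})} \leq C\eta$; interpolation against the uniform $L^\infty$-in-time bound then yields
$$\|b_\epsilon - b_\epsilon^\eta\|_{L^{q'}(0,T; H^{-m}(\Omega))} \leq C\,\omega(\eta), \qquad \omega(\eta)\to 0 \ \text{as}\ \eta \to 0,$$
uniformly in $\epsilon$.

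I would then split
$$\int_{\Omega_T} a_\epsilon b_\epsilon\phi\,\xdif x\,\xdif t = \int_{\Omega_T} a_\epsilon(b_\epsilon - b_\epsilon^\eta)\phi\,\xdif x\,\xdif t + \int_{\Omega_T} a_\epsilon b_\epsilon^\eta \phi\,\xdif x\,\xdif t .$$
The first piece is bounded by $C\|a_\epsilon\phi\|_{L^q(0,T;H^m)}\,\omega(\eta)\leq C\omega(\eta)$, uniformly in $\epsilon$, using the embedding $W^{1,p}\hookrightarrow H^m$ and the boundedness of $a_\epsilon$. For the second, at fixed $\eta > 0$ one has $\|\partial_t b_\epsilon^\eta\|_{L^\infty(0,T; H^{-m})} \leq C_\eta$ by direct differentiation of the convolution. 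Combined with the uniform bound of $b_\epsilon^\eta$ in $L^{q'}(0,T; L^{\alpha'})$, the Aubin-Lions-Simon lemma applied in the chain $L^{\alpha'} \Subset (W^{1,p})' \hookrightarrow H^{-m}$ produces strong convergence $b_\epsilon^\eta \to b^\eta$ in $L^{q'}(0,T;(W^{1,p})')$. Weak-strong duality between $a_\epsilon\phi \rightharpoonup a\phi$ in $L^q(0,T;W^{1,p})$ and this strong convergence then passes the second integral to the limit $\int_{\Omega_T} ab^\eta\phi\,\xdif x\,\xdif t$ as $\epsilon \to 0$.

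Finally, letting $\eta \to 0$ and noting that $b^\eta \to b$ in $L^{q'}(0,T; H^{-m})$ by the same mollification estimate applied to the limit $b$, the conclusion follows. The main obstacle is the uniform-in-$\epsilon$ regularization estimate, which crucially exploits the measure nature of $\partial_t b_\epsilon$ through a total-variation inequality; the rest is essentially standard machinery, once the dual compact embedding is inserted at the right rung of the Aubin-Lions chain.
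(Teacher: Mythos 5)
You should first note that the paper offers no proof of this statement: it is quoted directly from Moussa \cite{moussa}, so your attempt can only be measured against the standard argument in that reference. Your overall architecture (time mollification of $b_\epsilon$, the dual Rellich embedding of $L^{\alpha'}(\Omega)$ into $(W^{1,p}(\Omega))'$, an Aubin--Lions--Simon step, weak--strong pairing) is the right one, but the crucial step does not survive scrutiny. To estimate $\int_{\Omega_T} a_\epsilon (b_\epsilon - b_\epsilon^\eta)\phi\,\xdif x\,\xdif t$ you invoke ``the embedding $W^{1,p}\hookrightarrow H^m$'' to bound it by $C\|a_\epsilon\phi\|_{L^q(0,T;H^m)}\,\omega(\eta)$. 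That embedding is false in general (it is the reverse of the one you correctly used earlier, $H^m\hookrightarrow W^{1,p}$, to place $(W^{1,p})'$ inside $H^{-m}$), and ``enlarging $m$'' makes it worse, not harmless, for this step. Since $a_\epsilon$ is only bounded in $L^q(0,T;W^{1,p})$, smallness of $b_\epsilon-b_\epsilon^\eta$ in the very weak norm of $H^{-m}$ cannot be paired against it; what the argument needs is smallness, uniform in $\epsilon$, in $(W^{1,p}(\Omega))'$, and nothing in your proposal produces it.

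The repair is to use the compactness hypothesis a second time: combine the mollification estimate in $H^{-m}$ with the uniform bound of $b_\epsilon-b_\epsilon^\eta$ in $L^{q'}(0,T;L^{\alpha'})$ through an Ehrling-type interpolation inequality, $\|v\|_{(W^{1,p})'}\le\delta\|v\|_{L^{\alpha'}}+C_\delta\|v\|_{H^{-m}}$ (valid because $L^{\alpha'}$ embeds compactly into $(W^{1,p})'$ and continuously into $H^{-m}$), choosing first $\delta$ and then $\eta$; equivalently, skip the splitting altogether and apply Simon's compactness theorem directly to $b_\epsilon$ (bounded in $L^{q'}(0,T;L^{\alpha'})$, of bounded variation in time with values in $H^{-m}$) to get $b_\epsilon\to b$ strongly in $L^{q'}(0,T;(W^{1,p})')$, after which weak--strong duality with $a_\epsilon\phi$ concludes. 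Your final step has the same flaw ($b^\eta\to b$ is asserted in $H^{-m}$ but paired against $a\phi$, which is only $W^{1,p}$-valued), though it is trivially repaired since mollification converges strongly in $L^{q'}(0,T;L^{\alpha'})$ when $q'<\infty$. Finally, the endpoint $q=1$ (so $q'=\infty$) is not covered: interpolating the $L^1$-in-time estimate against an $L^\infty$-in-time bound cannot give smallness in $L^\infty(0,T;H^{-m})$ uniformly in $\epsilon$, and that case needs a separate argument (for instance equi-integrability of $t\mapsto\|a_\epsilon(t)\|_{W^{1,p}}$ from Dunford--Pettis combined with convergence in measure in time of $\|b_\epsilon-b\|_{(W^{1,p})'}$).
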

%

\section{Main result: homogenized phase transition model}\label{sec3}

The main result of the paper is the following.

\begin{theorem}[Homogenized model]\label{main}
	Assume that $\M_{0}$  and $\theta^\epsilon_0 $ are uniformly bounded \textcolor{blue}{with regard to $\epsilon$}  in, respectively, $H^{1}(\Omega^{\epsilon})$ and $L^{2}(\Omega^{\epsilon})$ \textcolor{blue}{(\textit{i.e.} $\ds\Vert \M_{0}\Vert_{H^{1}(\Omega^{\epsilon})}\leq C$ and  $\ds\Vert \theta^\epsilon_0\Vert_{L^{2}(\Omega^{\epsilon})}\leq C$)}. 
	For  $\epsilon > 0$, let the triple $(\M, v^{\epsilon}, \varphi^{\epsilon})$ be a solution of \eqref{modelfit1}.
	Set  $v^{\epsilon}=F(\theta^{\epsilon})$ and $v^{\epsilon}_0=F(\theta^{\epsilon}_0)$, where $F$ is given by \eqref{fonction}. 
	There exist  extensions $\widetilde{\M}$, $\widetilde{\M_0}$, $\widetilde{v^{\epsilon}}$, $\widetilde{v^{\epsilon}_0}$ and $\widetilde{\varphi^{\epsilon}}$  of $\M$, $\M_0$, $v^{\epsilon}$, $v^\epsilon_0$  and  $\varphi^{\epsilon}$ such that, up to a subsequence \textcolor{blue}{(still denoted by $\epsilon$)}, the following two-scale convergence results  hold:
	\begin{eqnarray*}
		&& (\widetilde{\M}, \widetilde{v^{\epsilon}}, \widetilde{\varphi^{\epsilon}}) \overset{2}{\rightharpoonup}
		(\chi_{Y^\ast} \m, \chi_{Y^\ast} v, (\chi_{\mathbb{R}^{3} \setminus \overline{\Omega}} + \chi_{\Omega} \chi_{Y^\ast}) \varphi),
		\\
		&& {\color{blue}(\widetilde{\M}, \widetilde{v^{\epsilon}})\overset{2}{\to}(\chi_{Y^\ast} \m, \chi_{Y^\ast} v)}\\
		&& (\widetilde{\M_0}, \widetilde{v^{\epsilon}_0}) \overset{2}{\rightharpoonup}  (\chi_{Y^\ast} \m_0, \chi_{Y^\ast} v_0) ,
	\end{eqnarray*}
	with $\m_0 \in H^{1}(\Omega)$ and $v_0 \in L^2(\Omega_T,L^2_\sharp(Y))$. 
	The limit triple $(\m,v,\varphi) \in L^2(0,T;H^1(\Omega)) \times L^2(0,T;H^1(\Omega)) \times L^2(0,T;W_{bp}(\R^3))$ is a solution of the following problem, the effective problem corresponding to \eqref{modelfit}:
	\begin{subequations}\label{modelfit100I}
		\begin{flalign}
			&\gamma\dep_t \m=\dvg(\A^{\ast}\nabla\m)-\theta_{c}(|\m|^{2}-1)\m-\theta\m+\bar{\bs{\mu}}\nabla \varphi +\mathbf{H}_2\m \hspace{0.8cm} \mbox{in}~\Omega_{T},&
			\label{3.a}
			\\
			&c_{1}\dep_{t}\ln(\theta)+c_{2}\dep_t\theta-\m\cdot\dep_t\m=\dvg(\K^{\ast}\nabla(k_0 \ln(\theta) +k_1 \theta)) \hspace{1.4cm} \mbox{in}~\Omega_{T},&
			\label{3.b}
			\\
			&\dvg(\bs{\mu}^{\ast}\nabla \varphi + \mathbf{H}_1\m)=0\hspace{5.3cm} \mbox{on}~(0,T)\times\R^3 , &
			\label{3.c}
			\\
			&\A^{\ast}\nabla\m\cdot\nu^\epsilon=0  , \quad
			(\K^{\ast}\nabla (k_0\ln(\theta)+k_1\theta))\cdot\nu^\epsilon=0  \hspace{1.3cm} \mbox{on}~(0,T)\times\dep\Omega, &
			\label{3.d}
			\\
			& (\bs{\mu}^{\ast}\nabla \varphi + \mathbf{H}_1\m)\cdot\nu^\epsilon=0  \hspace{5.2cm} \mbox{on}~(0,T)\times\dep\Omega,&
			\label{3.e}
			\\
			& \theta(x,0)=\mathcal{M}_Y(\chi^\ast\theta_0) (x), \quad \m(x,0)=\overline\chi \m_0(x) \hspace{3.55cm} \mbox{in } \Omega,&
			\label{3.f}
		\end{flalign}
	\end{subequations}
	with $\theta = G(v)$,  $\A^\ast,\ \K^\ast,\ \bs{\mu}^\ast,\ \bar{\bs{\mu}},\ \mathbf{H}_1$ and $\mathbf{H}_2$
	are the 3 × 3 symmetric matrix whose entries,  for $1\leq i,j\leq 3$, are  defined by
	\begin{equation}
		\!\!\!\
		\begin{array}{ll}
			\ds A^{\ast}_{ij}=\mathcal{M}_{Y^\ast}\!\Bigl( A_{ij}+\sum_{k=1}^{3}A_{ik}\dep_{y_k}\omega_j\Bigr),\ K^{\ast}_{ij}=\mathcal{M}_{Y^\ast}\!\Bigl( K_{ij}+\sum_{k=1}^{3}K_{ik}\dep_{y_k}\widehat{\omega}_j\Bigr),\\
			\ds \mu^{\ast}_{ij}=\mathcal{M}_{Y}\!\Bigl((\chi_{\R^{3}\setminus\overline{\Omega}}+\chi_{\Omega}\chi_{Y^\ast}) \bigl(\mu_{ij}+\sum_{k=1}^{3}\mu_{ik}\dep_{y_k}\bar{\omega}^{1}_j\bigr)\Bigr),\ \bar{\mu}_{ij}=\delta_{ij}+\mathcal{M}_{Y^\ast}\!(\dep_{y_i}\bar{\omega}^{1}_j),\\
			\ds {H_1}_{ij}=\mathcal{M}_{Y}\!\Bigl(\chi_{Y^\ast}\Bigl(\delta_{ij} +\sum_{k=1}^{3}\mu_{ik}\dep_{y_k}\bar{\omega}^{2}_j\Bigr)\Bigr),\ {H_2}_{ij}=\delta_{ij} +\mathcal{M}_{Y^\ast}(\dep_{y_i}\bar{\omega}^{2}_j),
		\end{array}
	\end{equation}
	where $\delta_{ij}$ denotes the Kronecker delta, and the functions $\omega_i$, $\bar{\omega}^k_i$, and $\widehat{\omega}_i$ for $1 \leq i \leq 3$ and $k = 1,2$,  satisfy the auxiliary problems \eqref{echm}, \eqref{omega1}, \eqref{omega2} and \eqref{omegav}, defined below.
\end{theorem}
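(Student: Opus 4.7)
The plan is to follow the classical four-stage periodic homogenization program (uniform estimates, compactness, passage to the limit, identification of effective coefficients), adapting the third stage to the doubly nonlinear structure of \eqref{2.b} and to the perforated geometry. First I would collect uniform-in-$\epsilon$ bounds: under the assumed initial-data estimates, the coercivity of $\mathbf{A}$, $\mathbf{K}$, $\boldsymbol{\mu}$ ensures that $\mathcal{E}^\epsilon(0)$ is bounded independently of $\epsilon$, whence the energy estimate \eqref{energy-estimate} of Theorem~\ref{exist} yields bounds on $\mathbf{m}^\epsilon$ in $L^\infty(0,T;H^1(\Omega^\epsilon)) \cap H^1(0,T;L^2(\Omega^\epsilon)) \cap L^\infty(0,T;L^4(\Omega^\epsilon))$, on $\theta^\epsilon$ and $\ln\theta^\epsilon$, hence on $v^\epsilon = F(\theta^\epsilon)$, in $L^2(0,T;H^1(\Omega^\epsilon))$, and on $\nabla\varphi^\epsilon$ in $L^\infty(0,T;L^2(\mathbb{R}^3 \setminus \bar{\mathcal{H}}^\epsilon))$.

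Next I would transfer these bounds to the fixed domain. A Cioranescu-Saint Jean Paulin-type extension (applicable because $\overline{\mathcal{H}} \subset Y$ is isolated, as in Figure~\ref{figure1}) produces $\widetilde{\mathbf{m}^\epsilon}$ and $\widetilde{v^\epsilon}$ on $\Omega_T$ preserving $H^1$-norms uniformly in $\epsilon$; $\varphi^\epsilon$ is extended by zero inside the perforations to obtain $\widetilde{\varphi^\epsilon}$ on $(0,T)\times\mathbb{R}^3$. Proposition~\ref{t-s}(1)--(3) then supplies two-scale limits $\widetilde{\mathbf{m}^\epsilon} \overset{2}{\rightharpoonup} \mathbf{m}^0$ with $\nabla \widetilde{\mathbf{m}^\epsilon} \overset{2}{\rightharpoonup} \nabla_x \mathbf{m}(t,x) + \nabla_y \mathbf{m}^1(t,x,y)$, and analogously for $v^\epsilon$ and $\varphi^\epsilon$. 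The structure $\chi_{Y^\ast} \mathbf{m}$ of the limit of $\widetilde{\mathbf{m}^\epsilon}$, and likewise $(\chi_{\mathbb{R}^3 \setminus \overline{\Omega}} + \chi_\Omega \chi_{Y^\ast})\varphi$ for the magnetic potential, is read off from the zero-extension convention on the holes.

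The main obstacle is the passage to the limit in the nonlinear terms $|\mathbf{m}^\epsilon|^2 \mathbf{m}^\epsilon$, $G(v^\epsilon)\mathbf{m}^\epsilon$, $\mathbf{K}^\epsilon g(G(v^\epsilon)) \nabla v^\epsilon$ and the product $\mathbf{m}^\epsilon \cdot \partial_t \mathbf{m}^\epsilon$, all of which require strong two-scale convergence of the extensions. Because the extension operator does not transport a bound on $\partial_t \widetilde{\mathbf{m}^\epsilon}$ from the bound on $\partial_t \mathbf{m}^\epsilon$, the Aubin-Lions lemma cannot be applied directly. The plan is to invoke Proposition~\ref{mossa} on the bilinear products by pairing the spatially-extended sequence (which carries the spatial regularity) with the un-extended sequence (which carries the temporal-derivative bound); this yields a compensated-compactness identity which, after composition with the unfolding operator $S^\epsilon$ and use of the equivalence \eqref{equ}, translates into ordinary strong convergence in $L^2(\Omega_T\times Y)$ and hence, via Proposition~\ref{t-s}(4), into strong two-scale convergence of $\widetilde{\mathbf{m}^\epsilon}$ and $\widetilde{v^\epsilon}$. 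The composition $G(v^\epsilon)$, for which $G$ is merely continuous with possibly singular behaviour near $0$, would then be handled by Vitali's theorem, using the $L^2$-boundedness of $v^\epsilon$ together with the almost-everywhere convergence inherited from the strong two-scale limit and suitable uniform integrability.

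The last step is the passage to the limit in the weak formulations of \eqref{2.a}--\eqref{2.f}. Using oscillating test functions of the form $\phi(t,x) + \epsilon \psi(t,x,x/\epsilon)$ in \eqref{2.a}, \eqref{2.b} and \eqref{2.c}, the variational structure decouples the macroscopic equations \eqref{3.a}--\eqref{3.c} from a family of cell problems defining the correctors $\omega_j$, $\widehat{\omega}_j$, $\bar{\omega}^1_j$, $\bar{\omega}^2_j$, whose averaged forms yield the explicit formulas for $\mathbf{A}^\ast$, $\mathbf{K}^\ast$, $\boldsymbol{\mu}^\ast$, $\bar{\boldsymbol{\mu}}$, $\mathbf{H}_1$, $\mathbf{H}_2$. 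The Maxwell equation is handled on $(0,T)\times\mathbb{R}^3$ by separating $\mathbb{R}^3 \setminus \overline{\Omega}$ and $\Omega$ and distinguishing matrix from perforations, which produces the two distinct correctors $\bar{\omega}^1_j$ and $\bar{\omega}^2_j$. The limit initial conditions in \eqref{3.f} follow from two-scale convergence of the extended initial data, with the factor $\overline{\chi} = \mathcal{M}_Y(\chi_{Y^\ast})$ in front of $\mathbf{m}_0$ arising from the cell-average of the characteristic function.
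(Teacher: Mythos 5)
Your proposal follows essentially the same route as the paper: uniform energy estimates, a Damlamian--Donato-type extension combined with the zero extension on the holes, Moussa's compensated-compactness variant of Aubin--Lions applied to the pair of extensions to obtain strong two-scale convergence via the unfolding operator $S^\epsilon$ and the equivalence \eqref{equ}, a Vitali argument for the nonlinearities, and oscillating test functions producing the cell problems and the effective tensors $\A^\ast,\K^\ast,\bs{\mu}^\ast,\bar{\bs{\mu}},\mathbf{H}_1,\mathbf{H}_2$. The only (harmless) bookkeeping discrepancies: the paper reserves Vitali for the cubic term $|\widetilde{\M}|^{2}\widetilde{\M}$ using the $L^\infty(0,T;L^4)$ bound, whereas $G=F^{-1}$ and $g\circ G$ are globally Lipschitz (since $F'\geq c_2$) — not singular near $0$ as you suggest — so those compositions are treated by Lipschitz stability of strong two-scale convergence; also the paper extends $\varphi^\epsilon$ inside $\Omega$ with $P^\epsilon$ (keeping the $H^1$ structure needed for the corrector) rather than by zero, the $\chi_{Y^\ast}$ factor entering through the characteristic function in the weak formulation and through the zero-extended sequences $\widetilde{\M},\widetilde{v^\epsilon}$.
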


The structure of the homogenized problem remains essentially the same as that of the microscopic problem. The effective parameters, as is often the case after homogenization, are calculated from auxiliary problems defined on the unit cell.
An important point however is the definition of the effective magnetic field.
Observation of the last term of \eqref{3.a} reveals that the magnetic field at the macroscopic scale can be decomposed into two parts. It can be expressed as the sum of a field that, similar to the microscopic scale, derives from a scalar potential, and a linear function of the magnetization. 
Another way to view this result is to interpret it in light of the definition of the Curie temperature.
It has been widely investigated in dozens of works, most of them being devoted to measures in special class of materials.
Here, we can characterize it with explicit expression of the magnetic potential,
\textcolor{blue}{
$$W_{\text{mic}}(\vert \M \vert, \theta^\epsilon) = \int_{\Omega^\epsilon} \Bigl( \frac{1}{4} \theta_c \vert \M \vert^4 + \frac{1}{2}(\theta^\epsilon - \theta_c)\vert \M \vert^2 \Bigr) ,$$
 }
in the microscopic model \eqref{model1}, {\it versus}
\textcolor{blue}{$$
W_{\text{mac}}(\vert \mathbf{m} \vert, \theta) = \int_{\Omega} \Bigl( \frac{1}{4} \theta_c \vert \mathbf{m} \vert^4 + \frac{1}{2} \left( (\theta - \theta_c) I_3 - \mathbf{H}_2 \right) \mathbf{m} \cdot \mathbf{m} \Bigr)
$$
}
in the macroscopic model \eqref{3.a} (we denote by $I_3$ the identity matrix).
The first one has a global minimum at $\M = 0$ for all $\theta^\epsilon \ge \theta_c$, $\theta_c$ being the scalar Curie temperature at the microscale.
The same computation with $W_{\text{mac}}$ has to take into account the tensor $\mathbf{H}_2$ and thus leads to the definition of a Curie temperature tensor.
Let us point out that the result thus  enriches the understanding of the behavior of the material on the macroscopic scale.

\medskip

The rest of the paper is devoted to the proof of Theorem \ref{main}.

\section{Uniform estimates}\label{sec4}

The present section is devoted to the statement of uniform estimates for a solution $(\M,\theta^{\epsilon},\varphi^{\epsilon})$ of Problem \eqref{modelfit} and $v^{\epsilon}$ featured in \eqref{modelfit1}  under the hypothesis of Theorem \ref{main}. We first prove the following result.
\begin{lemma}\label{lem11}
	There exist real numbers $\eta_1$ and $\eta_2$ that do not depend on $\epsilon$ such that  energy estimate \eqref{energy-estimate} holds true. \textcolor{blue}{In particular, $\m^\epsilon$ is uniformly bounded in $L^\infty(0,T;L^4(\Omega^\epsilon))$:}
	\begin{equation}\label{L4}
		\ds \Vert \m^{\epsilon}\Vert_{L^\infty(0,T;L^4(\Omega^\epsilon))}\leq C.
	\end{equation}
\end{lemma}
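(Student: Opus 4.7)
The plan is to upgrade the energy estimate of Theorem \ref{exist} to a bound that is uniform in $\epsilon$. Since the derivation of \eqref{energy-estimate} is already carried out in \cite{berti, tilioua} for fixed $\epsilon$, the task decomposes into three subtasks: (i) verify that the constants $\eta_1,\eta_2$ arising in that derivation depend only on $\epsilon$-independent data; (ii) show that $\mathcal{E}^\epsilon(0)$ is uniformly bounded in $\epsilon$; (iii) extract \eqref{L4} from the resulting uniform energy inequality.

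For (i), I would revisit the standard testing procedure: multiply \eqref{1.a} by $\partial_t\M$, multiply \eqref{1.b} by $1$ after invoking the manipulation leading to \eqref{depart1}, and multiply \eqref{1.c} by $\partial_t\varphi^{\epsilon}$; then integrate on $\Omega^{\epsilon}$ (resp.\ $\mathbb{R}^3$) and sum. All the terms absorbed on the left-hand side rely on the coercivity constants $\alpha_1,\alpha_2,\alpha_3$, while the terms handled through Gronwall use $\|\A\|_\infty,\|\K\|_\infty,\|\bs{\mu}\|_\infty$. Since these quantities, together with $T,\gamma,\theta_c,c_1,c_2,k_0,k_1$, are all independent of $\epsilon$ by hypothesis, the resulting $\eta_1,\eta_2$ are $\epsilon$-independent.

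For (ii), I would bound each term of $\mathcal{E}^{\epsilon}(0)$ separately. The diffusion term $\int_{\Omega^{\epsilon}}\A^{\epsilon}\nabla\M_0\cdot\nabla\M_0$ is controlled by $\|\A\|_\infty\|\nabla\M_0\|^2_{L^2(\Omega^{\epsilon})}\le C$ directly from the hypothesis $\|\M_0\|_{H^1(\Omega^{\epsilon})}\le C$. The temperature terms $c_2\|\theta_0^\epsilon\|^2_{L^2(\Omega^{\epsilon})}$ and $2c_1\int_{\Omega^{\epsilon}}\theta_0^\epsilon$ are controlled by $\|\theta_0^\epsilon\|_{L^2(\Omega^{\epsilon})}\le C$, using Cauchy-Schwarz and $|\Omega^{\epsilon}|\le|\Omega|$ for the linear term. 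For the quartic term $\|\M_0\|^4_{L^4(\Omega^{\epsilon})}$, I would invoke an $\epsilon$-uniform extension operator $P^{\epsilon}:H^1(\Omega^{\epsilon})\to H^1(\Omega)$ (available under our geometric assumption on $\mathcal{H}$; cf.\ \cite{cioranescu-saint}) together with the three-dimensional Sobolev embedding $H^1(\Omega)\hookrightarrow L^4(\Omega)$ to obtain $\|\M_0\|_{L^4(\Omega^{\epsilon})}\le\|P^{\epsilon}\M_0\|_{L^4(\Omega)}\le C\|\M_0\|_{H^1(\Omega^{\epsilon})}\le C$. The magnetostatic term is the most delicate: test \eqref{inmag} against $\varphi_0^\epsilon\in W_{bp}(\mathbb{R}^3\setminus\bar{\mathcal{H}}^{\epsilon})$ to get $\int_{\mathbb{R}^3}\bs{\mu}^{\epsilon}\nabla\varphi_0^\epsilon\cdot\nabla\varphi_0^\epsilon=-\int_{\Omega^{\epsilon}}\M_0\cdot\nabla\varphi_0^\epsilon$, then apply Cauchy-Schwarz and coercivity to conclude $\alpha_3\|\nabla\varphi_0^\epsilon\|^2_{L^2(\mathbb{R}^3)}\le\|\M_0\|_{L^2(\Omega^{\epsilon})}\|\nabla\varphi_0^\epsilon\|_{L^2(\mathbb{R}^3)}$, whence a uniform $L^2$ bound on $\nabla\varphi_0^\epsilon$.

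For (iii), bound \eqref{L4} is immediate: from the uniform energy inequality, $\tfrac{\theta_c}{4}\|\M(t,\cdot)\|^4_{L^4(\Omega^{\epsilon})}\le\mathcal{E}^{\epsilon}(t)\le\eta_1\mathcal{E}^{\epsilon}(0)+\eta_2\le C$ for a.e.\ $t\in(0,T)$, which gives the uniform $L^\infty(0,T;L^4(\Omega^{\epsilon}))$ bound. The main obstacle is ensuring that every constant — especially the Sobolev embedding constant on the perforated domain and the well-posedness constant on the unbounded Beppo-Levi space — is genuinely $\epsilon$-independent. Both rely crucially on the geometric hypothesis that $\bar{\mathcal{H}}\subset Y$ is strictly contained in the unit cell, which guarantees the existence of $\epsilon$-uniform extension operators and of standard Poincaré/Sobolev inequalities for the extended fields.
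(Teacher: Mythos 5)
Your plan is, in substance, the paper's own proof: re-run the a priori energy computation behind \eqref{energy-estimate} while checking that every constant comes from $\epsilon$-independent data, bound $\mathcal{E}^{\epsilon}(0)$ uniformly, and read \eqref{L4} off the definition of $\mathcal{E}^{\epsilon}$. One step, however, is wrong as literally written: in (i) you propose to multiply \eqref{1.b} by $1$. With that multiplier you would get $c_1\frac{\xdif}{\xdif t}\int_{\Omega^{\epsilon}}\ln\theta^{\epsilon}$, which is not part of $\mathcal{E}^{\epsilon}$ and has no sign, you would lose both the dissipation term $\int_{\Omega^{\epsilon}}\K^{\epsilon}\nabla(k_0\ln\theta^{\epsilon}+k_1\theta^{\epsilon})\cdot\nabla\theta^{\epsilon}$ and the term $\frac{c_2}{2}\frac{\xdif}{\xdif t}\|\theta^{\epsilon}\|^2_{L^2(\Omega^{\epsilon})}$, and the coupling $-\int_{\Omega^{\epsilon}}\theta^{\epsilon}\M\cdot\partial_t\M$ produced by testing \eqref{1.a} with $\partial_t\M$ would no longer cancel. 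The correct multiplier is $\theta^{\epsilon}$ for \eqref{1.b} (equivalently, $1$ for the undivided equation \eqref{new_depart}), which is what the paper uses to reach \eqref{dest}. With that repair your step (i) coincides with the paper's argument; your handling of the magnetostatic term (testing \eqref{1.c} with $\partial_t\varphi^{\epsilon}$, then using the equation once more) is equivalent to the paper's testing of the time-differentiated \eqref{1.c} with $\varphi^{\epsilon}$. You should also make explicit the one point where $\epsilon$-uniformity is not just a matter of coefficient bounds: the right-hand side is absorbed through $\theta_c\int_{\Omega^{\epsilon}}\M\cdot\partial_t\M\le \theta_c^4/(8\gamma)+C_\Omega\|\M\|^4_{L^4(\Omega^{\epsilon})}+\frac{\gamma}{2}\|\partial_t\M\|^2_{L^2(\Omega^{\epsilon})}$ as in \eqref{terme2}, where $C_\Omega$ is uniform because $|\Omega^{\epsilon}|\le|\Omega|$; this is what closes the Grönwall loop since $\|\M\|^4_{L^4(\Omega^{\epsilon})}$ sits inside $\mathcal{E}^{\epsilon}$.

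Your step (ii) is a useful addition rather than a deviation: the paper leaves the uniform bound on $\mathcal{E}^{\epsilon}(0)$ essentially implicit (hypotheses on $\M_0,\theta_0^{\epsilon}$ plus the remark following Theorem \ref{exist}), whereas you verify it term by term; the two nontrivial points — the quartic term via an $\epsilon$-uniform extension and $H^1(\Omega)\hookrightarrow L^4(\Omega)$ (consistent with Lemma \ref{ext}), and the magnetostatic term via testing \eqref{inmag} with $\varphi_0^{\epsilon}$ and coercivity of $\bs{\mu}$ — are exactly what is needed. Step (iii) is the paper's conclusion; note only that it also uses the positivity of $\theta^{\epsilon}$ and the coercivity of $\A$ and $\bs{\mu}$, so that all terms of $\mathcal{E}^{\epsilon}(t)$ are nonnegative and $\frac{\theta_c}{4}\|\M(t)\|^4_{L^4(\Omega^{\epsilon})}\le\mathcal{E}^{\epsilon}(t)$.
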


\begin{proof} 
	The proof of \eqref{energy-estimate} under assumptions of Theorem \ref{main} is based on techniques  already employed in \cite{berti} and \cite{tilioua}. Indeed, 
	we multiply equations \eqref{1.a} by $\partial_t \M$ and \eqref{1.b} by $\theta^\epsilon$. Then, by an integration by parts and a summation of the two equations, we find:
	\begin{eqnarray}\label{dest}
		&&	\ds \gamma\int_{\Omega^{\epsilon}}\vert\dep_{t}\M\vert^2\ \xdif x +\int_{\Omega^{\epsilon}} \A^\epsilon \nabla \M \cdot \partial_t \nabla  \M \ \xdif x+\frac{\theta_{c}}{4}\frac{\xdif}{\xdif t}\int_{\Omega^{\epsilon}}\vert \M\vert^4\ \xdif x
		\nonumber \\
		&&	\qquad 
		\ds +\frac{c_2}{2}\frac{\xdif}{\xdif t}\int_{\Omega^{\epsilon}}\vert \theta^{\epsilon}\vert^2\ \xdif x
		+c_1\frac{\xdif}{\xdif t}\int_{\Omega^{\epsilon}}\theta^{\epsilon}\ \xdif x-\int_{\Omega^{\epsilon}} \nabla \varphi^{\epsilon}\cdot \dep_{t}\M\ \xdif x
		\nonumber \\
		&&	\qquad
		\ds +\int_{\Omega^{\epsilon}}\K^\epsilon\nabla(k_0\ln(\theta^\epsilon)+k_1\theta^\epsilon)\cdot\nabla\theta^\epsilon\ \xdif x
		=\theta_{c}\int_{\Omega^{\epsilon}}\M\cdot\dep_{t} \M\ \xdif x.
	\end{eqnarray}
	Next, bearing in mind the symmetry of $\A$, notice that 
	\begin{equation}\label{fb}
		\ds\int_{\Omega^{\epsilon}} \A^\epsilon \nabla \M \cdot \partial_t \nabla  \M \ \xdif x=\frac{1}{2}\frac{\xdif}{\xdif t}\int_{\Omega^{\epsilon}}\A^\epsilon \nabla \M \cdot  \nabla  \M \ \xdif x.
	\end{equation}
	A concern in \eqref{dest} is the term $\ds \int_{\Omega^{\epsilon}} \nabla \varphi^{\epsilon}\cdot \dep_{t}\M\ \xdif x$, which needs to be evaluated. To this aim, we refer to equation \eqref{1.c} recalled below:
	$$\dvg(\bs{\mu}^\epsilon\dep_{t}\nabla\varphi^{\epsilon} + \chi_{\Omega^\epsilon}\dep_{t}\M)=0 \ \mbox{ in the weak sense in }(0,T) \times\R^{3},$$
	Multiplying this equation by $\varphi^{\epsilon}$ and integrating by parts in $\R^3$, we obtain:
	\begin{equation}
		\ds \int_{\Omega^{\epsilon}} \nabla \varphi^{\epsilon}\cdot\dep_{t}\M\ \xdif x=-\int_{\R^3}\bs{\mu}^\epsilon\dep_{t}\nabla\varphi^{\epsilon}\cdot\nabla \varphi^{\epsilon}\ \xdif x,
		\label{demes}
	\end{equation}
	where, the permeability $\bs{\mu}^\epsilon$ being symmetric, $$\ds \int_{\Omega^{\epsilon}} \nabla \varphi^{\epsilon}\cdot\dep_{t}\M\ \xdif x=-\frac{1}{2}\frac{\xdif}{\xdif t}\int_{\R^3}\bs{\mu}^\epsilon\nabla\varphi^{\epsilon}\cdot\nabla \varphi^{\epsilon}\ \xdif x.$$
	Using Hölder's and Young's inequalities and the injection property of Lebesgue spaces $L^4\subset L^2$, the right-hand side of \eqref{dest} is bounded as follows:
	\begin{equation}\label{terme2}
		\ds\theta_{c}\int_{\Omega^{\epsilon}}\M\cdot\dep_{t} \M\ \xdif x\leq\frac{\theta_{c}^4}{8\gamma}+C_\Omega\Vert \M\Vert_{L^4(\Omega^\epsilon)}^{4}+\frac{\gamma}{2}\Vert \dep_{t}\M\Vert_{L^2(\Omega^\epsilon)}^{2}.
	\end{equation}
	Now, according to \eqref{fb}-\eqref{terme2} as well as the  positivity of $\theta^{\epsilon}$, we infer from \eqref{dest} that
	\begin{equation*}
		\frac{\xdif}{\xdif t}\mathcal{E}^{\epsilon}(t)	+\frac{\gamma}{2}\|\dep_t\M\|_{L^{2}(\Omega^{\epsilon})}^2+\ds\int_{\Omega^{\epsilon}}\K^{\epsilon}\nabla(k_0 \ln(\theta^{\epsilon}) +k_1 \theta^{\epsilon})\cdot\nabla\theta^{\epsilon} \ \xdif x\leq C_\Omega\mathcal{E}^{\epsilon}(t)+C,
	\end{equation*}
	where $C_\Omega$ and $C=\theta_{c}^4/8\gamma$ do not depend on $\epsilon$. Now, \textcolor{blue}{bearing in mind the coercivity assumptions on $\mathbf{A}$, $\mathbf{\mu}$ and $\mathbf{K}$}, a direct application of Grönwall's lemma allows  to obtain \eqref{energy-estimate}, with $\ds\eta_{1}=e^{C_\Omega T}>0$ and $\ds\eta_{2}=C(e^{C_\Omega T}-1)/C_\Omega>0$.  Finally, \eqref{L4} follows directly from the definition of $\mathcal{E}^{\epsilon}$. 
\textcolor{blue}{Notice that this  result gives sense to the definition of magnetic potentials given after Theorem \ref{main}.}
\end{proof}

Next we derive some estimates for the sequence $v^{\epsilon}$.
\begin{lemma}\label{lem2}
	There exists a constant $C>0$, such that
	\begin{equation}\label{enest1}
		\|v^\epsilon\|_{L^{2}(0,T;H^{1}(\Omega^{\epsilon}))} \leq C, \quad 
		\Vert\dep_{t}v^\epsilon\|_{L^{2}(0,T;H^{-1}(\Omega^{\epsilon}))} \leq C.
	\end{equation}
\end{lemma}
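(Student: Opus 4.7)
The plan is the following. I would first observe that the scalar coefficient $g \circ G$ appearing in front of $\K^\epsilon$ in \eqref{2.b} is uniformly elliptic and uniformly bounded. A direct calculation gives $g'(s) = (k_1 c_1 - k_0 c_2)/(c_1 + c_2 s)^2$, so $g$ is monotone on $(0,\infty)$ with horizontal asymptotes $k_0/c_1$ at $s = 0$ and $k_1/c_2$ at $s = \infty$. Since $G(v^\epsilon) = \theta^\epsilon > 0$, it follows that
\begin{equation*}
0 < g_0 := \min\Bigl(\frac{k_0}{c_1},\frac{k_1}{c_2}\Bigr) \leq g(G(v^\epsilon)) \leq \max\Bigl(\frac{k_0}{c_1},\frac{k_1}{c_2}\Bigr) =: g_1 < \infty,
\end{equation*}
uniformly in $\epsilon$. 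Combined with the assumptions on $\K^\epsilon$, the matrix $\K^\epsilon g(G(v^\epsilon))$ is symmetric, coercive with constant $\alpha_2 g_0$, and bounded in $L^\infty$ uniformly in $\epsilon$.

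Next I would test \eqref{2.b} by $v^\epsilon$ and integrate over $\Omega^\epsilon$, using the homogeneous Neumann condition in \eqref{2.d}. The resulting identity reads
\begin{equation*}
\frac{1}{2}\frac{\xdif}{\xdif t}\|v^\epsilon\|_{L^2(\Omega^\epsilon)}^2 + \int_{\Omega^\epsilon}\K^\epsilon g(G(v^\epsilon))|\nabla v^\epsilon|^2\,\xdif x = \int_{\Omega^\epsilon}(\M\cdot\dep_t\M)\,v^\epsilon\,\xdif x.
\end{equation*}
The second term in the left-hand side is bounded below by $\alpha_2 g_0\|\nabla v^\epsilon\|_{L^2(\Omega^\epsilon)}^2$. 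For the right-hand side I would apply Hölder's inequality with exponents $(4,2,4)$ and invoke the Sobolev embedding $H^1(\Omega^\epsilon)\hookrightarrow L^4(\Omega^\epsilon)$ with a constant independent of $\epsilon$, obtained via the classical Cioranescu--Saint Jean Paulin extension operator on periodic perforated domains. This yields
\begin{equation*}
\Bigl|\int_{\Omega^\epsilon}(\M\cdot\dep_t\M)\,v^\epsilon\,\xdif x\Bigr| \leq \|\M\|_{L^4(\Omega^\epsilon)}\|\dep_t\M\|_{L^2(\Omega^\epsilon)}\|v^\epsilon\|_{L^4(\Omega^\epsilon)} \leq C\|\M\|_{L^4}\|\dep_t\M\|_{L^2}\bigl(\|v^\epsilon\|_{L^2}+\|\nabla v^\epsilon\|_{L^2}\bigr).
\end{equation*}
Young's inequality absorbs the gradient contribution into the left-hand side and leaves a remainder of the form $C\|\M\|_{L^4}^2\|\dep_t\M\|_{L^2}^2 + C\|v^\epsilon\|_{L^2}^2$. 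Since \eqref{L4} and \eqref{energy-estimate} imply that $t\mapsto\|\M(t)\|_{L^4}^2\|\dep_t\M(t)\|_{L^2}^2$ is bounded in $L^1(0,T)$ uniformly in $\epsilon$, an application of Grönwall's lemma gives
\begin{equation*}
\|v^\epsilon\|_{L^\infty(0,T;L^2(\Omega^\epsilon))}^2 + \|\nabla v^\epsilon\|_{L^2(\Omega^\epsilon_T)}^2 \leq C\bigl(1+\|v_0^\epsilon\|_{L^2(\Omega^\epsilon)}^2\bigr),
\end{equation*}
and the first bound in \eqref{enest1} follows, the initial datum $v_0^\epsilon=F(\theta_0^\epsilon)$ being controlled uniformly in $\epsilon$ by the hypotheses on $\theta_0^\epsilon$.

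For the time-derivative estimate I would read \eqref{2.b} as $\dep_t v^\epsilon = \M\cdot\dep_t\M + \dvg(\K^\epsilon g(G(v^\epsilon))\nabla v^\epsilon)$ and bound each contribution in the dual space $H^{-1}(\Omega^\epsilon)$. Hölder gives $\|\M\cdot\dep_t\M\|_{L^{4/3}(\Omega^\epsilon)}\leq\|\M\|_{L^4}\|\dep_t\M\|_{L^2}$, which is uniformly bounded in $L^2(0,T)$, and in three dimensions the embedding $L^{4/3}(\Omega^\epsilon)\hookrightarrow H^{-1}(\Omega^\epsilon)$, dual to $H^1\hookrightarrow L^4$, has an $\epsilon$-uniform constant. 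The divergence term is controlled using $\K^\epsilon g(G(v^\epsilon))\nabla v^\epsilon \in L^2(\Omega^\epsilon_T)$ uniformly (via the upper bounds on $\K^\epsilon$, $g\leq g_1$, and the first part of the lemma), so its divergence is uniformly bounded in $L^2(0,T;H^{-1}(\Omega^\epsilon))$. Adding the two contributions yields the second bound in \eqref{enest1}. The main technical obstacle I anticipate is ensuring that the Sobolev embedding constant in $\Omega^\epsilon$ does not degenerate as $\epsilon\to 0$; this is overcome by the standard extension operator for periodic perforations with isolated holes, which is also implicitly needed to give meaning to uniform embeddings on the perforated geometry.
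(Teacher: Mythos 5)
Your proposal is correct and follows essentially the same route as the paper: testing \eqref{2.b} with $v^\epsilon$, using the uniform bounds $0<\min(k_0/c_1,k_1/c_2)\le g(G(v^\epsilon))\le\max(k_0/c_1,k_1/c_2)$ together with the coercivity of $\K^\epsilon$, controlling the source term $\M\cdot\partial_t\M$ via H\"older, a Sobolev-type inequality with $\epsilon$-uniform constant and Young's inequality, closing with Gr\"onwall, and then estimating $\partial_t v^\epsilon$ by the same duality argument against $H^1_0(\Omega^\epsilon)$ test functions. The only cosmetic difference is your choice of exponents ($L^4\!\times\! L^2\!\times\! L^4$ with $H^1\hookrightarrow L^4$, and $L^{4/3}\hookrightarrow H^{-1}$ for the dual bound) where the paper works with $L^{3/2}\!\times\! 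L^3$ and an $L^3$ interpolation inequality; your explicit remark on the $\epsilon$-uniformity of the embedding constant, via the extension operator, is a point the paper leaves implicit.
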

\begin{proof} Multiply \eqref{2.b} by $v^{\epsilon}$. Integration by parts gives
	\begin{equation}\label{1.0}
		\ds \frac{1}{2}\frac{\xdif}{\xdif t}\int_{\Omega^{\epsilon}}\vert v^{\epsilon}\vert^{2}\xdif x+\int_{\Omega^{\epsilon}}\K^{\epsilon}(x)g(\theta^\epsilon)\nabla v^{\epsilon}\cdot \nabla v^{\epsilon} \xdif x
		=\ds\int_{\Omega^{\epsilon}}(\M\cdot\dep_{t}\M)v^{\epsilon} \xdif x,
	\end{equation}
	where $g$, \textcolor{blue}{defined by \eqref{tra20}, is bounded by parameters depending on the model chosen for the heat conductivity and the specific heat (see \eqref{heatlaw} where $c_1,c_2,k_0,k_1>0$):}
	\begin{equation}\label{0}
		0<\pi_{0}=\min\Big(\frac{k_{0}}{c_{1}},\frac{k_{1}}{c_{2}}\Big)\leq g(x)\leq\max\Big(\frac{k_{0}}{c_{1}},\frac{k_{1}}{c_{2}}\Big)=\pi_{1},\ \forall x>0.
	\end{equation}
	Using H\"older's and Young's inequalities, we get
	\begin{equation}\label{1}
		\ds\int_{\Omega^{\epsilon}}(\M\cdot\dep_{t}\M)v^{\epsilon} \xdif x
		\leq 
		\Vert \M \partial_t \M \Vert_{L^{3/2}(\Omega^\epsilon)}\Vert v^{\epsilon}\Vert_{L^3(\Omega^\epsilon)}
		\leq
		\frac{C(t)}{2} + \frac{1}{2} \Vert v^\epsilon \Vert^2_{L^3(\Omega^\epsilon)},
	\end{equation}
	where $t \mapsto C(t)$ is uniformly bounded in  $L^1(0,T)$ since $ L^\infty(0,T;H^1(\Omega^\epsilon)) \subset L^\infty(0,T;L^6(\Omega^\epsilon))$  (see Lemma \ref{lem11}).
	Next, the interpolation inequality and once again the Young inequality leads to
	$$\ds  \frac{1}{2} \Vert v^{\epsilon}\Vert_{L^3(\Omega^\epsilon)}^2 \leq  \frac{C}{2} {\color{blue}\Vert  v^{\epsilon}\Vert_{H^1(\Omega^\epsilon)}}\Vert v^{\epsilon}\Vert_{L^2(\Omega^\epsilon)}
	\leq \left(\frac{C^2}{8\pi_0 \alpha_2}+{\color{blue}\frac{\pi_0 \alpha_2}{2}} \right)\Vert v^{\epsilon}\Vert_{L^2(\Omega^\epsilon)}^2 + \frac{\pi_0 \alpha_2}{2} \Vert \nabla v^{\epsilon}\Vert_{L^2(\Omega^\epsilon)}^2.
	$$
	Inserting the latter result into \eqref{1} and bearing in mind the coercivity of $\K^{\epsilon}$, we deduce from \eqref{1.0}, after integrating over time, that
	\begin{eqnarray*}
		&& \frac{1}{2} \int_0^\tau \frac{\xdif}{\xdif t}\int_{\Omega^{\epsilon}}\vert v^{\epsilon}\vert^{2}
		+ \frac{\pi_0 \alpha_2}{2} \int_0^\tau \int_{\Omega^\epsilon} \vert \nabla v^\epsilon \vert^2 
		\leq \frac{C(t)}{2} + {\color{blue}\left(\frac{C^2}{8\pi_0 \alpha_2}+\frac{\pi_0 \alpha_2}{2} \right)}\int_0^\tau \int_{\Omega^\epsilon} \vert v^{\epsilon}\vert^{2}
	\end{eqnarray*}
	for any $\tau \in (0,T)$. According to Gr$\ddot{\mbox{o}}$nwall's lemma, the following uniform estimates follow:
	\begin{equation*}\label{es}
		\begin{array}{ll}
			\Vert v^{\epsilon}\Vert_{L^{\infty}(0,T;L^2(\Omega^\epsilon))}\leq C, \quad 
			\Vert  v^{\epsilon}\Vert_{L^{2}(0,T;H^1(\Omega^\epsilon))}\leq C   .
		\end{array}
	\end{equation*}

	For the estimate of  $\dep_{t} v^\epsilon$ in $L^{2}(0,T;H^{-1}(\Omega^{\epsilon}))$, it is sufficient to show that there exists a constant $C$ such that,
	\begin{equation}\label{V1}
		\Bigr \vert \int_{0}^{T}\int_{\Omega^{\epsilon}} \dep_{t} v^\epsilon \Phi\xdif x\xdif t \Bigr \vert\leq C \Vert \Phi \Vert_{L^{2}(0,T;H_0^1(\Omega^\epsilon))},\ \forall \Phi\in L^{2}(0,T;H_0^1(\Omega^\epsilon)).
	\end{equation}
	Multiplying  \eqref{2.b} by $\Phi$ and integrating over $\Omega^\epsilon_T$, one obtains
	\begin{eqnarray*}
		&&	\int_{\Omega^{\epsilon}_T} \dep_{t} v^\epsilon \Phi\xdif x\xdif t
		=-\ds \int_{\Omega^{\epsilon}_T}\K^{\epsilon}(x) g(\theta^\epsilon)\nabla v^{\epsilon}\cdot \nabla \Phi \xdif x \xdif t+\int_{\Omega_T^{\epsilon}}(\M\cdot\dep_{t}\M)\Phi \xdif x \xdif t.
		\\
		&& \qquad \qquad
		\le C \Vert \nabla \Phi \Vert_{L^2(\Omega^\epsilon_T)} + C \Vert \Phi \Vert_{L^2(0,T;L^3(\Omega^\epsilon))}  .
	\end{eqnarray*}
	Using once again an interpolation inequality together with Poincar\'e's inequality, we get $\ds\Vert \Phi \Vert_{L^2(0,T;L^3(\Omega^\epsilon))}\le C \biggr(\int_0^T \Vert \nabla \Phi \Vert_{L^2(\Omega^\epsilon)} \Vert \Phi \Vert_{L^2(\Omega^\epsilon)}  \biggr)^{1/2} \le C \Vert \Phi \Vert_{L^2(0,T; H_0^1(\Omega^\epsilon)   )}$. 
	Lemma \ref{lem2} is proved.
\end{proof}

\section{Homogenization process}\label{sec5}

In this section, we homogenize the phase transition micro-model in perforated domains by letting $\epsilon$ tend to zero in Problem \eqref{modelfit1}.
The first two subsections are devoted to the preliminary statement of convergence results for solutions of \eqref{modelfit1}: weak and strong two-scale convergence results are respectively given  in paragraphs \ref{2s} and \ref{2st}, strong  results being proved using a compensated compactness argument;  these results are re-interpreted, also in paragraph \ref{2st}, using an adaptation of the two-scale decomposition of Visintin \cite{vin}.
Finally, we let $\epsilon\to 0$ in a variational formulation of \eqref{modelfit1} in paragraph \ref{passage} using 
especially a Vitali compactness argument  to overcome the inapplicability of the classical Lebesgue theorem.

\subsection{Two-scale convergence results}\label{2s}
Since the structure of the perforated domain $\Omega^\epsilon$ oscillates with $\epsilon$, \textcolor{blue}{and because we consider the Neumann problem on the holes (see \eqref{2.d} and \eqref{2.e})}, we first need to extend the sequences $\mathbf{m}^\epsilon$, $v^{\epsilon}$, and $\varphi^{\epsilon}$ to the whole domain $\Omega$. This requires the use of a suitable extension operator, as presented in the following lemma:

\begin{lemma}{\normalfont (Damlamian-Donato \cite{damlamain-donato})}\label{ext}
	There exists $c>0$ such that, for all $\epsilon> 0$, there exists an extension operator $P^{\epsilon}$ from $H^{1}(\Omega^{\epsilon})$ to $H^{1}(\Omega)$  such that
	$$P^{\epsilon}\psi=\psi \   \mbox{in } \Omega^{\epsilon} \mbox{ and }
	\|P^{\epsilon}\psi\|_{H^{1}(\Omega)}\leq c\|\psi\|_{H^{1}(\Omega^{\epsilon})}  \quad  \forall \psi \in H^{1}(\Omega^{\epsilon}). $$
\end{lemma}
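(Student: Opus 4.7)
The plan is to prove this classical extension result by the standard three-step procedure: construct an extension on the reference cell from $Y^\ast$ to $Y$, transport it to each $\epsilon$-cell inside $\Omega$ by rescaling, then glue the pieces together and handle the boundary strip separately. The delicate point I would emphasize from the outset is that the constant $c$ must be independent of $\epsilon$, which forces an extension whose gradient norm is controlled by the gradient norm of the original function \emph{alone}, rather than by its full $H^1$ norm. Indeed, under the dilation $x=\epsilon(k+y)$, the squared $L^2$ and squared $L^2$-gradient norms scale as $\epsilon^3$ and $\epsilon$ respectively, so a naive $H^1$-to-$H^1$ bound on the reference cell would produce a constant that degenerates as $\epsilon\to 0$.

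For the reference cell, since $\bar{\mathcal{H}}\subset\subset Y$ has smooth boundary, $Y^\ast$ is Lipschitz and Calder\'on's theorem supplies a continuous linear extension $\tilde{\pi}:H^1(Y^\ast)\to H^1(Y)$. To upgrade this to the gradient-only estimate, I would split any $v\in H^1(Y^\ast)$ as $v=(v-\bar v)+\bar v$ with $\bar v=\mathcal{M}_{Y^\ast}(v)$, apply Poincar\'e--Wirtinger to obtain $\Vert v-\bar v\Vert_{H^1(Y^\ast)}\le C\Vert\nabla v\Vert_{L^2(Y^\ast)}$, and set $\pi v:=\tilde{\pi}(v-\bar v)+\bar v$. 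Since $\nabla\bar v=0$, this extension agrees with $v$ on $Y^\ast$ and satisfies both $\Vert\nabla\pi v\Vert_{L^2(Y)}\le C\Vert\nabla v\Vert_{L^2(Y^\ast)}$ and $\Vert\pi v\Vert_{L^2(Y)}\le C\Vert v\Vert_{H^1(Y^\ast)}$.

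For the global construction, for each $k\in\mathbb{Z}^3$ with $\epsilon(k+Y)\subset\Omega$ I would set $(P^\epsilon\psi)(x):=(\pi v_k)(x/\epsilon-k)$ on $\epsilon(k+Y)$, where $v_k(y):=\psi(\epsilon(k+y))$. Because $\bar{\mathcal{H}}$ is strictly interior to $Y$, the restriction of $\pi v_k$ to every face $\partial Y$ coincides with $v_k$ itself, so the locally defined pieces glue into a single element of $H^1$ across the interior cell boundaries. On the boundary strip made of cells not entirely contained in $\Omega$, the definition of $\bar{\mathcal{H}}^\epsilon$ places no holes, so I simply set $P^\epsilon\psi:=\psi$ there. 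The scale-invariant gradient bound on $\pi$ transports cell-by-cell to $\Vert\nabla P^\epsilon\psi\Vert_{L^2(\epsilon(k+Y))}\le C\Vert\nabla\psi\Vert_{L^2(\epsilon(k+Y^\ast))}$ with the \emph{same} constant, while the $L^2$ bound transports similarly up to a harmless factor $\epsilon^2\le 1$ in front of the gradient term on the right. Summing the local estimates over all cells yields $\Vert P^\epsilon\psi\Vert_{H^1(\Omega)}\le c\Vert\psi\Vert_{H^1(\Omega^\epsilon)}$ with $c$ independent of $\epsilon$, which is the announced bound. The main obstacle is precisely the gradient-only controlled extension on the reference cell, which the Poincar\'e--Wirtinger decomposition above resolves.
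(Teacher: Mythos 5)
Your construction is correct, but note that the paper does not prove this lemma at all: it is imported verbatim from Damlamian--Donato, so there is no internal proof to compare against. What you have reconstructed is the classical Cioranescu--Saint Jean Paulin-type argument, and each step checks out: the mean-value decomposition $\pi v=\tilde{\pi}(v-\mathcal{M}_{Y^\ast}(v))+\mathcal{M}_{Y^\ast}(v)$ combined with Poincar\'e--Wirtinger is exactly the right device to get a gradient-by-gradient bound on the cell, the scaling bookkeeping ($\epsilon^3$ for $L^2$, $\epsilon$ for gradients, the harmless $\epsilon^2\le 1$) is accurate, and the gluing works because the extension leaves the function unchanged on $Y^\ast$, hence on a neighbourhood of the cell faces, so traces match across interfaces and along the unperforated boundary strip where you set $P^\epsilon\psi=\psi$. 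Two caveats are worth making explicit. First, your argument uses $\bar{\mathcal{H}}\subset\subset Y$ (holes not touching $\partial Y$) and connectedness of $Y^\ast$ (needed for Poincar\'e--Wirtinger); this matches the geometry of Section \ref{setting} and Figure \ref{figure1}, but not the paper's remark that the results should extend to non-isolated periodic holes as in Allaire--Murat. In that generality the cell-by-cell scheme breaks down, since the holes may cross the cell faces, $\partial Y$ is no longer contained in $\overline{Y^\ast}$, and the locally extended pieces need not have matching traces; this is precisely the situation the cited Damlamian--Donato result is designed to handle, so their theorem is genuinely stronger than what you prove. Second, linearity of $P^\epsilon$ (used implicitly later when extensions are combined with two-scale limits) does hold for your $\pi$, since the subtracted mean is linear in $v$, but it deserves a sentence. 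With these qualifications, your proof is a valid, self-contained justification of the lemma in the isolated-hole setting actually used in the paper's analysis.
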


According to  Lemmas \ref{lem11} and \ref{lem2},  the following uniform estimates hold for the extended solutions:
\begin{equation}\label{ES1}
	\begin{array}{ll}
		\ds	\|P^{\epsilon}\M\|_{L^{2}(0,T;H^{1}(\Omega))}\leq C, \\
		\ds\|\chi_{\R^{3}\setminus\overline{\Omega}}\nabla\varphi^{\epsilon}+\chi_{\Omega}\nabla P^{\epsilon}\varphi^{\epsilon}\|_{L^{\infty}(0,T;L^{2}(\R^{3}))}\leq C, \\
		\ds\|P^{\epsilon}v^{\epsilon}\|_{L^{2}(0,T;H^1(\Omega))}\leq C.
	\end{array}
\end{equation}
The latter estimates with Proposition \ref{t-s} let us claim the following  results.
\begin{proposition}\label{rescon}
	There exist limit functions  $\m \in L^2(0,T; H^1(\Omega))$, $\m_1 \in L^2(\Omega_T; H^1_{\sharp}(Y))$, $v \in L^{2}(0,T;H^1(\Omega))$ and $v_1 \in L^{2}(\Omega_T;H^1_{\sharp}(Y))$ such that the following convergence hold true,  up to a subsequence (not relabeled for simplicity):
	\begin{equation}\label{conv}
		\begin{array}{ll}
			P^\epsilon \M \rightharpoonup \m \quad \text{weakly  in } L^2(0,T; H^1(\Omega)), \\
			P^\epsilon v \rightharpoonup v \quad \text{weakly in } L^2(0,T; H^1(\Omega)), \\
			\nabla(P^\epsilon \M)  \overset{2}{\rightharpoonup} \nabla \m + \nabla_y\m_1 \quad \text{ in } L^2(\Omega_T\times Y), \\
			\nabla(P^\epsilon v^\epsilon)  \overset{2}{\rightharpoonup} \nabla v + \nabla_y v_1 \quad \text{in } L^2(\Omega_T\times Y).
		\end{array}
	\end{equation}
\end{proposition}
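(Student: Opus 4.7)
The plan is to obtain the four convergences in \eqref{conv} by extracting suitable subsequences from the uniformly bounded extended sequences and invoking the compactness results of Proposition \ref{t-s}. I would proceed in two steps, treating first the weak convergences in $L^2(0,T;H^1(\Omega))$ and then the two-scale convergences of the gradients.

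First I would rely on the uniform bounds in \eqref{ES1}: the sequences $(P^\epsilon \M)_\epsilon$ and $(P^\epsilon v^\epsilon)_\epsilon$ are uniformly bounded in the Hilbert space $L^2(0,T;H^1(\Omega))$. By the Banach--Alaoglu theorem, we extract (not relabeled) subsequences converging weakly to limits $\m, v \in L^2(0,T;H^1(\Omega))$. This gives the first two convergences in \eqref{conv}.

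Next, since $\nabla(P^\epsilon \M)$ and $\nabla(P^\epsilon v^\epsilon)$ are uniformly bounded in $L^2(\Omega_T)$, Proposition \ref{t-s}(1) (applied component by component, the matrix-valued notation having been introduced precisely for this purpose) provides further subsequences that two-scale converge in $L^2(\Omega_T \times Y)$. The structural identification of these two-scale limits as $\nabla \m + \nabla_y \m_1$ and $\nabla v + \nabla_y v_1$, with correctors $\m_1, v_1 \in L^2(\Omega_T; H^1_\sharp(Y))$, follows from Proposition \ref{t-s}(3) adapted to the time-dependent setting. The classical argument consists in first noting that $\epsilon \nabla(P^\epsilon u^\epsilon)$ two-scale converges to zero (it is of order $\epsilon$ in $L^2$), which, tested against smooth $Y$-periodic vector fields $\bs\Psi(t,x,y)$, forces the two-scale limit of $\nabla(P^\epsilon u^\epsilon)$ to have vanishing $y$-curl; this yields, for almost every $(t,x)$, the existence of a periodic potential $u_1(t,x,\cdot) \in H^1_\sharp(Y)/\mathbb{R}$ such that the two-scale limit equals $\nabla_x u + \nabla_y u_1$, with measurability in $(t,x)$ following from standard selection arguments.

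The only real technicality, essentially cosmetic, is that Proposition \ref{t-s}(3) is formally stated for sequences bounded in $H^1(\Omega_T)$, whereas here we only control the spatial gradient in $L^2(\Omega_T)$, without a priori regularity in time. This is handled by treating $t$ as a parameter in the identification argument: the two-scale test functions in Definition 3.2 already depend on $t$, and the curl-free argument in $y$ needs only spatial integration by parts, so the proof carries over verbatim to sequences in $L^2(0,T;H^1(\Omega))$. I expect this adaptation to be the only step requiring care; the rest is a direct application of the weak compactness and structural theorems recalled in Section \ref{sec!}.
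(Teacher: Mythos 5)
Your proposal is correct and follows exactly the route the paper takes: the paper simply asserts Proposition \ref{rescon} as a consequence of the uniform bounds \eqref{ES1} (via the extension operator of Lemma \ref{ext} and Lemmas \ref{lem11}--\ref{lem2}) together with Proposition \ref{t-s}, giving no further detail. Your additional observation that the structure theorem must be applied with $t$ as a parameter -- since only $\nabla_x(P^\epsilon\M)$ and $\nabla_x(P^\epsilon v^\epsilon)$ are controlled, not the time derivatives of the extensions -- and your sketch of the standard identification of the limit as $\nabla_x u + \nabla_y u_1$ are accurate and merely make explicit what the paper leaves implicit.
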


The convergence results for the potential field are described in the following proposition.
\begin{proposition}
	Let $\widetilde{\varphi^\epsilon }$ be the extension of $\varphi^\epsilon$ defined by 
	$$ \widetilde{\varphi^\epsilon }= \chi_{\mathbb{R}^{3} \setminus \overline{\Omega}} \varphi^\epsilon + \chi_{\Omega} P^\epsilon \varphi^\epsilon .$$
	There exist $\varphi \in L^{\infty}(0,T; W_{bp}(\mathbb{R}^{3}))$ and $\varphi_1 \in L^{\infty}(0,T; L^2(\mathbb{R}^{3}; H^1_{\sharp}(Y)))$,   \textcolor{blue}{such that up to a subsequence, the following convergence results hold:}
	\begin{equation}\label{conv1}
		\begin{aligned}
			&\widetilde{\varphi^\epsilon } \rightharpoonup \varphi \quad \text{weakly in } L^2(0,T; W_{bp}(\mathbb{R}^{3})),\\
			&\nabla\widetilde{\varphi^\epsilon } \overset{2}{\rightharpoonup} \nabla \varphi + \nabla_y \varphi_1 \quad \text{in  } L^2((0,T)\times\mathbb{R}^{3}  \times Y).
		\end{aligned}
	\end{equation}
\end{proposition}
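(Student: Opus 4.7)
The plan proceeds in three steps: a uniform bound on $\widetilde{\varphi^\epsilon}$ in a reflexive Beppo-Levi space, extraction of the weak and two-scale limits, and identification of the microscopic corrector $\varphi_1$.

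First, I would establish that $\widetilde{\varphi^\epsilon}$ is uniformly bounded in $L^\infty(0,T;W_{bp}(\mathbb{R}^3))$. Since $P^\epsilon\varphi^\epsilon = \varphi^\epsilon$ on $\Omega^\epsilon$, the two pieces defining $\widetilde{\varphi^\epsilon}$ share the same trace on $\partial \Omega$, so $\widetilde{\varphi^\epsilon} \in H^1_{\rm loc}(\mathbb{R}^3)$ with distributional gradient
$$\nabla \widetilde{\varphi^\epsilon} = \chi_{\mathbb{R}^3 \setminus \overline{\Omega}} \nabla \varphi^\epsilon + \chi_\Omega \nabla P^\epsilon \varphi^\epsilon.$$
The second estimate of \eqref{ES1} then yields $\|\nabla \widetilde{\varphi^\epsilon}\|_{L^\infty(0,T;L^2(\mathbb{R}^3))} \leq C$. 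Combined with the weighted Hardy inequality inherent to $W_{bp}(\mathbb{R}^3)$, recalled after Theorem \ref{exist} following Amrouche et al., and possibly after subtracting an additive constant calibrated with the decay of $\varphi^\epsilon$ at infinity, this gives $\|\widetilde{\varphi^\epsilon}\|_{L^\infty(0,T;W_{bp}(\mathbb{R}^3))} \leq C$.

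Next, the reflexivity of $W_{bp}(\mathbb{R}^3)$ provides a subsequence with $\widetilde{\varphi^\epsilon} \rightharpoonup \varphi$ weakly in $L^2(0,T;W_{bp}(\mathbb{R}^3))$. The $L^2$-bound on $\nabla \widetilde{\varphi^\epsilon}$ permits applying the two-scale compactness of Proposition \ref{t-s}(1) to extract $\nabla \widetilde{\varphi^\epsilon} \overset{2}{\rightharpoonup} \Xi$ for some $\Xi \in L^2((0,T)\times\mathbb{R}^3\times Y;\mathbb{R}^3)$; since the ambient spatial domain is $\mathbb{R}^3$, this step requires an exhaustion by bounded subdomains combined with a diagonal extraction against a countable dense family of admissible test functions. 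To identify $\Xi$, I would follow the classical Nguetseng-Allaire procedure: testing $\nabla\widetilde{\varphi^\epsilon}$ against $\epsilon\,\bs\Phi(t,x,x/\epsilon)$ with $\bs\Phi \in \mathcal{D}((0,T)\times\mathbb{R}^3;C^\infty_\sharp(Y))^3$ satisfying $\dvg_y \bs\Phi = 0$ and passing to the two-scale limit shows that $\mathrm{curl}_y \Xi = 0$, while testing against $y$-independent admissible functions forces $\mathcal{M}_Y \Xi = \nabla \varphi$. A Poincaré lemma on the torus then produces $\varphi_1 \in L^2((0,T)\times\mathbb{R}^3;H^1_\sharp(Y)/\mathbb{R})$ such that $\Xi = \nabla \varphi + \nabla_y \varphi_1$, which is the claimed decomposition.

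The main obstacle, I expect, is the non-compactness of $\mathbb{R}^3$: the standard two-scale convergence machinery is naturally stated on bounded open sets, so both the compactness extraction and the identification of $\Xi$ must be carried out through an exhaustion argument combined with compactly supported test functions. A secondary subtlety is the piecewise nature of $\widetilde{\varphi^\epsilon}$, but the trace compatibility on $\partial\Omega$ (via $P^\epsilon\varphi^\epsilon = \varphi^\epsilon$ on $\Omega^\epsilon$) ensures no singular boundary contribution enters the distributional gradient, keeping this part of the argument routine.
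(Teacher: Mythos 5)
Your proposal is correct and follows essentially the same route as the paper: the paper's proof consists precisely of noting that the extension property of $P^\epsilon$ together with the second estimate in \eqref{ES1} yields $\|\widetilde{\varphi^\epsilon}\|_{L^{2}(0,T;W_{bp}(\mathbb{R}^{3}))}\leq C$, after which the convergences follow from the two-scale compactness results of Proposition \ref{t-s}. Your additional details (trace compatibility across $\partial\Omega$, exhaustion of $\mathbb{R}^3$, and the Nguetseng--Allaire identification of the gradient limit) simply unpack the standard machinery the paper invokes implicitly.
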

\begin{proof} For the proof, it is sufficient to observe that 
	$$
	\Vert\widetilde{\varphi^\epsilon }\Vert_{L^{2}(0,T; W_{bp}(\mathbb{R}^{3}))} \leq C,
	$$ 
	which can be verified using the extension property and the estimate $\eqref{ES1}_3$. 
\end{proof}

In the remainder of this paper, we denote by  $\widetilde{v^\epsilon}$ and $\widetilde{\M}$ the extensions by zero of $v^\epsilon$ and $\M$ in the holes of $\Omega^\epsilon$. Notice that
$$\widetilde{v^\epsilon}=\chi_{\Omega^\epsilon} P^{\epsilon}v^\epsilon\ \mbox{and}\  \widetilde{\M}=\chi_{\Omega^\epsilon} P^{\epsilon}\M.$$
One checks (see Lemma 2.3 and Remark 2.4 in \cite{allaire1} for details) that
\begin{equation}\label{conv2}
	\widetilde{v^\epsilon}\rightharpoonup \bar{\chi} v \ \mbox{and}\ \widetilde{\M}\rightharpoonup \bar{\chi} \m\ \mbox{weakly in } L^2(\Omega_{T}).
\end{equation}

\medskip

Let us end the subsection with a result for the initial data. 
Assume as in Theorem \ref{main} that $\M_{0}$  and $\theta^\epsilon_0 $ are uniformly bounded in, respectively, $H^{1}(\Omega^{\epsilon})$ and $L^{2}(\Omega^{\epsilon})$. 
Define $\widetilde{v_0^\epsilon}$ and $\widetilde{\M_0}$ by
$$\widetilde{v_0^\epsilon}=\chi_{\Omega^\epsilon} P^{\epsilon}v_0^\epsilon\ \mbox{and}\  \widetilde{\M_0}=\chi_{\Omega^\epsilon} P^{\epsilon}\M_0.$$
With this definition, which is similar to the one used for $(\widetilde{v^\epsilon},\widetilde{\M})$, the existence of limit functions  $\m_0 \in H^{1}(\Omega)$ and $v_0 \in L^2(\Omega_T;L^2_\sharp(Y))$ such that
\begin{equation}
	(\widetilde{\M_0}, \widetilde{v^{\epsilon}_0}) \overset{2}{\rightharpoonup}  (\chi_{Y^\ast} \m_0, \chi_{Y^\ast} v_0) 
	\label{convinit}
\end{equation}
is obvious.

\subsection{Strong two-scale convergence results}\label{2st}

Weak convergence results \eqref{conv}-\eqref{conv2} are not sufficient for passing to the limit in the nonlinear terms of Problem  \eqref{modelfit1}. 
A classical tool for obtaining compactness results when dealing with evolution problems, is the Aubin-Lions argument. 
It does not apply here because  an estimate for a term in the form $\partial_t f^\epsilon$ does not  provide information about $\partial_t (P^\epsilon f^\epsilon)$. 
\textcolor{blue}{To address this problem, we propose a kind of compensated compactness argument for passing to the limit in the product of the two kind of extensions used in the paper (extension by zero and extension by operator $P^\epsilon$). The following lemma, obtained using Proposition \ref{mossa}, is the first step toward a strong two-scale convergence result.}

\begin{lemma}\label{cor1}
	The following convergence results hold true:
	\begin{eqnarray}
		\label{cf2}
		&&		\ds \lim_{\epsilon \to 0} \int_{\Omega_{T}}  (P^{\epsilon}v^\epsilon) \widetilde{v^\epsilon}\phi \xdif x \xdif t = \int_{\Omega_{T}} \bar{\chi}v^2 \phi \xdif x \xdif t ,\  \forall \phi \in \mathcal{C}((0,T)\times\overline{\Omega}),
		\\
		\label{cvM}
		&&
		\lim_{\epsilon \to 0} \int_{\Omega_{T}}  (P^{\epsilon}\M) \widetilde{\M}\mpsi \xdif x \xdif t = \int_{\Omega_{T}} \bar{\chi}\m^2 \mpsi \xdif x \xdif t,\  \forall \mpsi \in \mathcal{C}((0,T)\times\overline{\Omega}).
	\end{eqnarray}
\end{lemma}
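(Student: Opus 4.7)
The plan is to apply the compensated compactness result Proposition \ref{mossa} separately to each product, with $a_\epsilon$ the $H^1$-regular extension and $b_\epsilon$ the zero extension. Fix $n=3$, $p=q=\alpha=2$, so that $\alpha' = q' = 2$ and $\alpha < np/(n-p) = 6$. Proposition \ref{rescon} together with \eqref{conv2} already supplies the required weak convergences: $P^\epsilon v^\epsilon \rightharpoonup v$ in $L^2(0,T; H^1(\Omega))$ with $\widetilde{v^\epsilon} \rightharpoonup \bar\chi v$ in $L^2(\Omega_T)$, and similarly for $\M$. The only hypothesis of Proposition \ref{mossa} left to check is the control of $\partial_t b_\epsilon$ in $\mathcal{M}(0,T; H^{-m}(\Omega))$ for some $m \in \mathbb{N}^*$.

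The main obstacle, and the very reason a classical Aubin--Lions argument is unavailable, is exactly this temporal bound: $\partial_t v^\epsilon$ and $\partial_t \M$ are controlled only on $\Omega^\epsilon$, and one has no direct information on $\partial_t (P^\epsilon v^\epsilon)$ or $\partial_t (P^\epsilon \M)$. The key observation is that the bound is straightforward for the \emph{zero} extension: since the holes $\bar{\mathcal H}^\epsilon$ are time-independent,
$$\partial_t \widetilde{v^\epsilon} = \chi_{\Omega^\epsilon} \partial_t v^\epsilon, \qquad \partial_t \widetilde{\M} = \chi_{\Omega^\epsilon} \partial_t \M$$
as distributions on $(0,T) \times \Omega$, and the restriction of any $\phi \in C_c^\infty((0,T)\times\Omega)$ to $\Omega^\epsilon$ is an admissible test function for \eqref{2.a}--\eqref{2.b} thanks to the Neumann condition \eqref{2.d}, which kills the would-be boundary terms on $\partial\mathcal H^\epsilon$.

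For $\widetilde{v^\epsilon}$, testing \eqref{2.b} against $\phi$ yields
$$\langle \partial_t \widetilde{v^\epsilon}, \phi \rangle = -\int_{\Omega^\epsilon_T} \K^\epsilon g(G(v^\epsilon)) \nabla v^\epsilon \cdot \nabla \phi + \int_{\Omega^\epsilon_T}(\M \cdot \partial_t \M)\phi .$$
The boundedness of $g$ from \eqref{0}, Lemma \ref{lem2}, and Lemma \ref{lem11} (which in particular gives $\M \in L^\infty(0,T; L^2(\Omega^\epsilon))$ and $\partial_t \M \in L^2(\Omega^\epsilon_T)$) give, by Cauchy--Schwarz in space and time, a bound of the form $C(\Vert \nabla\phi\Vert_{L^2(\Omega_T)} + \Vert \phi \Vert_{L^\infty(\Omega_T)})$. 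Taking $m = 2$ and using the three-dimensional embedding $H^2_0(\Omega) \hookrightarrow H^1_0(\Omega) \cap L^\infty(\Omega)$, I conclude that $\partial_t \widetilde{v^\epsilon}$ is bounded in $\mathcal{M}(0,T; H^{-2}(\Omega))$ uniformly in $\epsilon$.

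The identical strategy applied to \eqref{2.a} yields the analogous bound on $\partial_t \widetilde{\M}$: the cubic term $(\vert\M\vert^2-1)\M$ and the coupling term $G(v^\epsilon)\M = \theta^\epsilon \M$ both lie uniformly in $L^\infty(0,T; L^{4/3}(\Omega^\epsilon))$ by H\"older, since \eqref{L4} gives $\M \in L^\infty(0,T; L^4(\Omega^\epsilon))$ and the energy estimate gives $\theta^\epsilon \in L^\infty(0,T; L^2(\Omega^\epsilon))$; the stray-field term $\nabla \varphi^\epsilon$ is controlled via \eqref{ES1}; and the embedding $H^2_0(\Omega) \hookrightarrow L^4(\Omega)$ absorbs the pairings with $\phi$. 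Once both time-derivative bounds are in hand, Proposition \ref{mossa} applied with $(a_\epsilon, b_\epsilon) = (P^\epsilon v^\epsilon, \widetilde{v^\epsilon})$ and with $(P^\epsilon \M, \widetilde{\M})$ delivers \eqref{cf2} and \eqref{cvM} respectively, the limit products being identified as $\bar\chi\, v^2$ and $\bar\chi\, \m^2$ via the weak convergences recalled above.
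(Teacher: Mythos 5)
Your argument is correct and follows essentially the same route as the paper: both apply Moussa's Proposition \ref{mossa} with $n=3$, $p=q=\alpha=2$, taking $a_\epsilon$ to be the $P^\epsilon$-extension and $b_\epsilon$ the extension by zero, and identifying the limit products through the weak convergences \eqref{conv} and \eqref{conv2}. The only difference is in how the bound on $\partial_t b_\epsilon$ is checked: for $\widetilde{\M}$ your detour through equation \eqref{2.a} is unnecessary, since $\partial_t\widetilde{\M}=\chi_{\Omega^\epsilon}\partial_t\M$ is already uniformly bounded in $L^2(\Omega_T)$ by the energy estimate (which is what the paper uses), while for $\widetilde{v^\epsilon}$ your explicit verification via \eqref{2.b} and the admissibility of restricted test functions under the Neumann condition is precisely the justification implicit in the paper's brief appeal to the estimate of Lemma \ref{lem2}.
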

\begin{proof} In Prop. \ref{mossa}, set $n = 3$, $q = 2$, $p = 2$. Consequently, $\alpha = 2<np/(n-p)$.
	Set $a_{\epsilon} = P^{\epsilon} v^{\epsilon}$ and $b_{\epsilon} = \widetilde{v^{\epsilon}}$. 
	It follows  from $\eqref{ES1}_{4,5}$ that $a_{\epsilon}$, $b_{\epsilon}$ and $\partial_t b_{\epsilon}$ are respectively bounded in $L^{2}(0, T; H^1(\Omega))$, $ L^{2}(0, T; L^{2}(\Omega))$, and $L^{2}(0, T; H^{-1}(\Omega))$. 
	Using the convergences $\eqref{conv}_{2}$ and \eqref{conv2} as well as the previous proposition, one obtains \eqref{cf2}.
	To obtain \eqref{cvM}, it is sufficient to consider $ a_{\epsilon} = P^{\epsilon} \M \in L^{2}(0, T; H^{1}(\Omega))$ and $b_{\epsilon} = \widetilde{\M} \in L^{2}(0, T; L^{2}(\Omega))$, with $\partial_{t} b_{\epsilon} \in L^{2}(0, T; L^{2}(\Omega)) $.  
\end{proof}

Next, a strong  two-scale  convergence result for  $\widetilde{v^{\epsilon}}$ follows from Lemma \ref{cor1}. 
\begin{lemma}\label{lem4}
	The sequences  $\widetilde{v^{\epsilon}}$ and $\widetilde{\M} $ strongly  two-scale converge  in $L^2(\Omega_{T} \times Y)$ to $\chi_{Y^\ast} v$ and $\chi_{Y^\ast} \m$, respectively,  in the sense of Proposition \ref{t-s}.
\end{lemma}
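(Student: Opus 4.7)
The strategy is to invoke the characterization of strong two-scale convergence provided by item (4) of Proposition \ref{t-s}: it suffices to establish (i) the weak two-scale convergence to the claimed limit, and (ii) convergence of the $L^2(\Omega_T)$ norms to the $L^2(\Omega_T\times Y)$ norm of the limit. I will work out the details for $\widetilde{v^\epsilon}$; the argument for $\widetilde{\M}$ is identical after replacing Poincaré-type bounds with those of Lemma \ref{lem11}.

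First, I establish the weak two-scale convergence $\widetilde{v^\epsilon}\overset{2}{\rightharpoonup}\chi_{Y^\ast}v$. Write $\widetilde{v^\epsilon}(t,x)=\chi_{Y^\ast}(x/\epsilon)\,P^\epsilon v^\epsilon(t,x)$. For any admissible test $\phi\in L^2(\Omega_T;\mathcal{C}_\sharp(Y))$, the function $(t,x,y)\mapsto \chi_{Y^\ast}(y)\phi(t,x,y)$ is itself admissible in the sense of Definition \ref{def}. Since, by $\eqref{conv}_2$ and Proposition \ref{t-s}(3), $P^\epsilon v^\epsilon$ two-scale converges to the $y$-independent limit $v(t,x)$, we obtain
\begin{equation*}
\int_{\Omega_T}\widetilde{v^\epsilon}(t,x)\,\phi\bigl(t,x,\tfrac{x}{\epsilon}\bigr)\,\xdif x\xdif t
\longrightarrow
\int_{\Omega_T\times Y} v(t,x)\,\chi_{Y^\ast}(y)\,\phi(t,x,y)\,\xdif y\xdif x\xdif t,
\end{equation*}
which is exactly the weak two-scale convergence $\widetilde{v^\epsilon}\overset{2}{\rightharpoonup}\chi_{Y^\ast}v$.

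Next, I establish the norm convergence. Observing that $\widetilde{v^\epsilon}=\chi_{\Omega^\epsilon}P^\epsilon v^\epsilon$ and $(\widetilde{v^\epsilon})^2=\widetilde{v^\epsilon}\,P^\epsilon v^\epsilon$, the identity reads
\begin{equation*}
\|\widetilde{v^\epsilon}\|_{L^2(\Omega_T)}^2=\int_{\Omega_T}\widetilde{v^\epsilon}\,P^\epsilon v^\epsilon\,\xdif x\xdif t.
\end{equation*}
Applying Lemma \ref{cor1} with the continuous test function $\phi\equiv 1\in\mathcal{C}((0,T)\times\overline{\Omega})$, this quantity converges to $\int_{\Omega_T}\bar{\chi}\,v^2\,\xdif x\xdif t$. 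On the other hand, since $v$ does not depend on $y$,
\begin{equation*}
\|\chi_{Y^\ast}v\|_{L^2(\Omega_T\times Y)}^2=\int_{\Omega_T}v(t,x)^2\Bigl(\int_{Y}\chi_{Y^\ast}(y)\,\xdif y\Bigr)\xdif x\xdif t=\int_{\Omega_T}\bar{\chi}\,v^2\,\xdif x\xdif t,
\end{equation*}
so that the two limits coincide.

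Combining the weak two-scale convergence with the norm matching, Proposition \ref{t-s}(4) yields $\widetilde{v^\epsilon}\overset{2}{\to}\chi_{Y^\ast}v$ in $L^2(\Omega_T\times Y)$. The very same reasoning applied to $\widetilde{\M}=\chi_{\Omega^\epsilon}P^\epsilon\M$, using $\eqref{conv}_1$ for the weak two-scale convergence and \eqref{cvM} with $\boldsymbol{\psi}\equiv 1$ for the norm identity $\|\widetilde{\M}\|_{L^2(\Omega_T)}^2=\int_{\Omega_T}\widetilde{\M}\cdot P^\epsilon\M\,\xdif x\xdif t\to\int_{\Omega_T}\bar{\chi}\,\m^2\,\xdif x\xdif t=\|\chi_{Y^\ast}\m\|_{L^2(\Omega_T\times Y)}^2$, gives the corresponding strong two-scale convergence for the magnetization. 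The only nontrivial ingredient is Lemma \ref{cor1}, which was precisely designed via the compensated-compactness argument of Proposition \ref{mossa} to compensate the absence of a direct Aubin--Lions-type compactness on $P^\epsilon v^\epsilon$; everything else is straightforward algebraic manipulation.
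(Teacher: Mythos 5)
Your proposal is correct and follows essentially the same route as the paper: the decisive ingredient in both is Lemma \ref{cor1} applied with the unit test function, combined with the identity $(\widetilde{v^\epsilon})^2=\widetilde{v^\epsilon}\,P^\epsilon v^\epsilon$ (resp. $|\widetilde{\M}|^2=\widetilde{\M}\cdot P^\epsilon\M$). The only difference is presentational: you verify the norm-matching criterion of Proposition \ref{t-s}(4) and invoke it, whereas the paper equivalently expands $\|\widetilde{v^\epsilon}-\chi_{\Omega^\epsilon}v\|^2_{L^2(\Omega_T)}$, shows it tends to zero via \eqref{cf2}, and checks the product-limit property directly against an arbitrary weakly two-scale convergent sequence.
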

\begin{proof} Let $f^\epsilon$ be a sequence in $L^2(\Omega_T)$ and $f \in L^2(\Omega_T;L_{\sharp}^2(Y))$ such that $f^\epsilon \overset{2}{\rightharpoonup} f$ in $L^2(\Omega_T \times Y)$.
	The aim is to pass to the limit $\epsilon \to 0$ in 
	\begin{equation}\label{cf3}
		\ds \int_{\Omega_{T}} \widetilde{v^\epsilon}f^\epsilon \xdif x\xdif t = \int_{\Omega_{T}} \chi_{\Omega^\epsilon}vf^\epsilon\xdif x\xdif t+\int_{\Omega_{T}} (\widetilde{v^\epsilon}-\chi_{\Omega^\epsilon}v)f^\epsilon \xdif x\xdif t
	\end{equation}
	where 
	\begin{eqnarray*}
		&& \Bigr\vert\int_{\Omega_{T}} (\widetilde{v^\epsilon}-\chi_{\Omega^\epsilon}v)f^\epsilon\xdif x\xdif t\Bigr\vert\leq C \Vert\widetilde{v^\epsilon}-\chi_{\Omega^\epsilon}v\Vert_{L^2(\Omega_{T})} ,
		\\
		&& \ds\Vert\widetilde{v^\epsilon}-\chi_{\Omega^\epsilon}v\Vert_{L^2(\Omega_{T})}^{2}= \int_{\Omega_{T}}\widetilde{v^\epsilon}^2\xdif x\xdif t+\int_{\Omega_{T}}\chi_{\Omega^\epsilon}v^2\xdif x\xdif t-2\int_{\Omega_{T}}\chi_{\Omega^\epsilon}\widetilde{v^\epsilon}v\xdif x\xdif t .
	\end{eqnarray*}
	Using  $(\chi_{\Omega^\epsilon})^2 = \chi_{\Omega^\epsilon}$ and  \eqref{cf2} with $\phi = 1$, one obtains
	$$
	\ds \lim_{\epsilon\rightarrow 0} \ds\Vert\widetilde{v^\epsilon}-\chi_{\Omega^\epsilon}v\Vert_{L^2(\Omega_{T})}=0.
	$$
	Therefore,  the limit in \eqref{cf3} reads
	$$ \lim_{\epsilon\rightarrow 0} \ds \int_{\Omega_{T}} \widetilde{v^\epsilon}f^\epsilon \xdif x\xdif t = \int_{\Omega_{T}}\int_{Y} \chi_{Y^\ast}vf\xdif x\xdif y \xdif t .$$ 
	The proof of the convergence result  for $\M$ follows the same line.  
\end{proof}

%
%
Passing to the limit in the nonlinear terms of Problem \eqref{modelfit1} requires further a slightly different formalization of the two-scale convergence method. 
Here we use the dilation operator introduced in Subsection \ref{2svi}.
In particular, it allows to interpret the former two-scale convergence results  as follows.
Notice that, for using the transform $S^\epsilon$, we extend by zero in $\mathbb{R}^3\setminus \Omega$ all functions defined in $\Omega$ without changing the notations.

\begin{lemma}\label{strong1}
	\textcolor{blue}{Up to a subsequence}, the following convergence results hold true
	\begin{equation}\label{CON}
		\begin{array}{ll}
			\ds \widetilde{v^\epsilon} \circ S^{\epsilon} \to \chi_{Y^\ast} v \ \mbox{ in }L^2(\Omega_T \times Y )\\
			\ds \widetilde{\M}\circ S^{\epsilon}  \to \chi_{Y^\ast} \m \ \mbox{ in }L^2(\Omega_T \times Y ),\\
			\ds \dep_t\widetilde{\M}\circ S^{\epsilon} \rightharpoonup \chi_{Y^\ast} \dep_t\m \ \mbox{weakly in }L^2(\Omega_T \times Y ).
		\end{array}
	\end{equation}
	\begin{equation}\label{correc}
		\begin{array}{ll}
			\ds \nabla P^\epsilon v^\epsilon\circ S^\epsilon\rightharpoonup \nabla_x v+\nabla_y v_1\ \mbox{weakly in } \ L^2(\Omega_{T}\times Y),\\
			\ds \nabla P^\epsilon \M\circ S^\epsilon\rightharpoonup \nabla_x \m+\nabla_y \m_1\ \mbox{weakly in } \ L^2(\Omega_{T}\times Y),\\
			\ds \nabla \widetilde{\varphi^\epsilon }\circ S^\epsilon\rightharpoonup \nabla_x \varphi+\nabla_y \varphi_1\ \mbox{weakly in } \ L^2((0,T)\times\R^3\times Y).
		\end{array}
	\end{equation}
\end{lemma}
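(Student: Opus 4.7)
The plan reduces essentially to combining the unfolding equivalence \eqref{equ} with the two-scale convergence results already established in Subsection \ref{2s}. I would first dispose of \eqref{correc}: the weak two-scale convergences $\nabla P^\epsilon \M \overset{2}{\rightharpoonup} \nabla \m + \nabla_y \m_1$ and $\nabla P^\epsilon v^\epsilon \overset{2}{\rightharpoonup} \nabla v + \nabla_y v_1$ stated in Proposition \ref{rescon}, together with $\nabla \widetilde{\varphi^\epsilon} \overset{2}{\rightharpoonup} \nabla \varphi + \nabla_y \varphi_1$ from \eqref{conv1}, translate componentwise via \eqref{equ} into the three weak convergences claimed in \eqref{correc}. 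No additional ingredient is required, since the relevant $L^2$-boundedness is guaranteed by \eqref{ES1}.

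For the first two statements in \eqref{CON} I would rely on Lemma \ref{lem4}, which provides strong two-scale convergence of $\widetilde{v^\epsilon}$ and $\widetilde{\M}$. The equivalence \eqref{equ} gives at once the weak $L^2(\Omega_T \times Y)$ convergence of $\widetilde{v^\epsilon}\circ S^\epsilon$ and $\widetilde{\M}\circ S^\epsilon$ to the expected limits $\chi_{Y^\ast} v$ and $\chi_{Y^\ast}\m$. To upgrade this to strong convergence, I would apply the identity \eqref{tra} to $u(t,x,y) = |\widetilde{v^\epsilon}(t,x)|^2$ (extended by zero outside $\Omega$), which yields
\[
\|\widetilde{v^\epsilon}\circ S^\epsilon\|_{L^2(\Omega_T\times Y)} = \|\widetilde{v^\epsilon}\|_{L^2(\Omega_T)},
\]
and combine it with the norm convergence $\|\widetilde{v^\epsilon}\|_{L^2(\Omega_T)} \to \|\chi_{Y^\ast} v\|_{L^2(\Omega_T\times Y)}$ furnished by the strong two-scale convergence (Proposition \ref{t-s}.\ref{pr}). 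Weak convergence together with convergence of norms in the Hilbert space $L^2(\Omega_T\times Y)$ forces strong convergence, and the same scheme closes the case of $\widetilde{\M}\circ S^\epsilon$.

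The third line of \eqref{CON} is the step I expect to be the main obstacle, as it demands the identification of a two-scale limit rather than a mere translation. The energy estimate \eqref{energy-estimate} gives $\partial_t\widetilde{\M} = \chi_{\Omega^\epsilon}\partial_t\M$ uniformly bounded in $L^2(\Omega_T)$, so the two-scale compactness in Proposition \ref{t-s} yields, up to a subsequence, a weak two-scale limit $\eta^0 \in L^2(\Omega_T\times Y)$. To identify $\eta^0 = \chi_{Y^\ast}\partial_t\m$, I would test against $\phi(t,x,y) = \psi(t,x)\zeta(y)$ with $\psi \in \mathcal{D}(\Omega_T)$ and $\zeta \in C^\infty_\sharp(Y)$, integrate by parts in $t$ at the $\epsilon$ level, and pass to the limit using the strong two-scale convergence of $\widetilde{\M}$ provided by Lemma \ref{lem4}. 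The weak convergence $\widetilde{\M}\rightharpoonup \bar{\chi}\m$ in $L^2(\Omega_T)$ from \eqref{conv2}, combined with the uniform bound on $\partial_t\widetilde{\M}$, forces $\partial_t\m \in L^2(\Omega_T)$, which legitimizes the integration by parts on the limit side and produces the required identification. A final application of \eqref{equ} then converts this two-scale statement into the weak $L^2(\Omega_T\times Y)$ convergence of $\partial_t\widetilde{\M}\circ S^\epsilon$ claimed in \eqref{CON}.
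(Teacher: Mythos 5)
Your proposal is correct and follows essentially the same route as the paper: \eqref{correc} is obtained by translating the two-scale convergences of Proposition \ref{rescon} and \eqref{conv1} through the equivalence \eqref{equ}, and the strong convergences in \eqref{CON} follow from weak convergence of the unfolded sequences plus norm convergence via \eqref{tra} and Lemma \ref{lem4} in the Hilbert space $L^2(\Omega_T\times Y)$. Your third step merely spells out the identification of the limit of $\partial_t\widetilde{\M}\circ S^\epsilon$ (via the $L^2$ bound from Lemma \ref{lem11} and integration by parts in time), which the paper states tersely as a direct consequence of that bound; this is a welcome clarification but not a different argument.
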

\begin{proof}  The first convergence in \eqref{CON} means
	\begin{equation}\label{limt}
		\ds\lim_{\epsilon\rightarrow 0} \Vert \widetilde{v^\epsilon}(S^{\epsilon}(t,x,y))-\chi_{Y^\ast}(y) v(t,x)\Vert_{L^2(\Omega_{T}\times Y)} =0.
	\end{equation}
	From \eqref{tra}, one has
	%
	$$	\ds        \int_{0}^{T}\int_{\Omega}\int_{Y}  \vert \widetilde{v^\epsilon}(S^{\epsilon}(t,x,y))\vert^2\xdif x\xdif y\xdif t
	= \int_{0}^{T}\int_{\Omega}  \vert \widetilde{v^\epsilon}(t,x)\vert^2\xdif x\xdif t   , $$
	%
	where, according to Lemma \ref{lem4}, 
	\begin{equation*}
		\ds\lim_{\epsilon\rightarrow 0} \int_{0}^{T}\int_{\Omega}  \vert \widetilde{v^\epsilon}(t,x)\vert^2\xdif x\xdif t=\int_{0}^{T}\int_{\Omega}\int_{Y}  \vert \chi_{Y^\ast}(y)v(t,x)\vert^2\xdif x\xdif y\xdif t  .
	\end{equation*}
	One infers from the two latter relations that
	\begin{equation}\label{tra4}
		\ds \lim_{\epsilon\rightarrow 0} \Vert \widetilde{v^\epsilon}(S^{\epsilon}(t,x,y))\Vert^{2}_{L^2(\Omega_{T}\times Y)} = \Vert \chi_{Y^\ast}(y) v(t,x)\Vert^{2}_{L^2(\Omega_{T}\times Y)}.
	\end{equation}
	With \eqref{tra4} in hand, \eqref{limt} follows from the weak convergence in $L^2(\Omega_T \times Y)$ of $\widetilde{v^\epsilon} \circ S^{\epsilon} $ to $\chi_{Y^\ast} v$ which is ensured by Lemma \ref{lem4} and \eqref{equ}. 
	The proof of the convergence of $\widetilde \M \circ S^\epsilon$ in $L^2(\Omega_T \times Y)$ follows the same lines. The convergence of $\dep_t\widetilde{\M}\circ S^{\epsilon}$ is a direct consequence of Lemma \ref{lem11} that ensures the uniform boundedness of $\dep_t\M$ in $L^2(\Omega_T^\epsilon)$  and thus of $\dep_t\widetilde{\M}$ in $L^2(\Omega_T)$. 
	Convergence results \eqref{correc} for the gradients follows from $\eqref{conv}_{4,5}$, $\eqref{conv1}_{2}$ and \eqref{equ}.
\end{proof} 

\bigskip

We now present a result for handling nonlinear terms, particularly those involving a function composed with a Lipschitz but unbounded function which prevents from using Lebesgue's dominated convergence theorem.
An example in Problem \eqref{modelfit1} is  function $G$.
\textcolor{blue}{The following result of course exploits the two-scale compactness established for $\widetilde{v^{\epsilon}}$ in Lemma \ref{lem4}.}
\begin{proposition}\label{prop}
	For every Lipschitz-continuous function $h : \mathbb{R} \to \mathbb{R}$, we have, up to a subsequence:
	\begin{equation}\label{strong}
		h(\widetilde{v^\epsilon}) \overset{2}{\to} h(\chi_{Y^\ast} v) \ \text{ strongly two-scale in} \ L^2(\Omega_{T}\times Y).
	\end{equation}
\end{proposition}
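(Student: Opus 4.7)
The plan is to transfer the strong two-scale convergence already obtained for $\widetilde{v^\epsilon}$ in Lemma \ref{lem4} through the Lipschitz map $h$, by working at the level of the unfolded sequence $h(\widetilde{v^\epsilon}) \circ S^\epsilon$. The crucial observation is that since $S^\epsilon$ acts only on space–time, not on values, it commutes with any pointwise operation; in particular, $h(\widetilde{v^\epsilon}) \circ S^\epsilon = h(\widetilde{v^\epsilon} \circ S^\epsilon)$. This identity will let me exploit the Lipschitz estimate directly inside the $L^2(\Omega_T\times Y)$ norm of the unfolded sequences, reducing the whole argument to standard manipulations in a fixed domain.

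First, by Lemma \ref{strong1}, up to a subsequence, $\widetilde{v^\epsilon} \circ S^\epsilon \to \chi_{Y^\ast} v$ strongly in $L^2(\Omega_T\times Y)$. Denoting by $L$ a Lipschitz constant for $h$, the pointwise bound $|h(a)-h(b)|\le L|a-b|$ yields
$$
\|h(\widetilde{v^\epsilon}) \circ S^\epsilon - h(\chi_{Y^\ast} v)\|_{L^2(\Omega_T\times Y)} \le L\,\|\widetilde{v^\epsilon} \circ S^\epsilon - \chi_{Y^\ast} v\|_{L^2(\Omega_T\times Y)} \to 0,
$$
so the unfolded nonlinear sequence converges strongly in $L^2(\Omega_T\times Y)$ to $h(\chi_{Y^\ast} v)$. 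Via the equivalence \eqref{equ}, this strong (hence weak) $L^2$ convergence of the unfolded sequence translates into the weak two-scale convergence $h(\widetilde{v^\epsilon}) \overset{2}{\rightharpoonup} h(\chi_{Y^\ast} v)$. Next, applying identity \eqref{tra} to the (trivially $Y$-periodic, $y$-independent) function $|h(\widetilde{v^\epsilon})(t,x)|^2$ gives
$$
\|h(\widetilde{v^\epsilon})\|_{L^2(\Omega_T)}^2 = \|h(\widetilde{v^\epsilon}) \circ S^\epsilon\|_{L^2(\Omega_T\times Y)}^2 \longrightarrow \|h(\chi_{Y^\ast} v)\|_{L^2(\Omega_T\times Y)}^2,
$$
which is precisely the norm convergence required by Proposition \ref{t-s}(\ref{pr}) to upgrade weak two-scale convergence to strong two-scale convergence. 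Applying this criterion closes the proof.

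The only non-routine point is conceptual rather than computational: the Lipschitz hypothesis is exactly what allows the strong $L^2$ estimate on the unfolded sequence to survive composition with $h$, a step that would fail for a merely continuous nonlinearity. In the latter case one could still obtain a.e. convergence along a subsequence of $\widetilde{v^\epsilon} \circ S^\epsilon$ and invoke Vitali's theorem, with equi-integrability of $|h(\widetilde{v^\epsilon}\circ S^\epsilon)|^2$ supplied by a growth bound on $h$ and the uniform $L^2$ bound on $\widetilde{v^\epsilon}\circ S^\epsilon$; the Lipschitz form of the assumption packages both requirements in one line.
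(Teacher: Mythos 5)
Your proposal is correct and follows essentially the same route as the paper: both reduce the statement to the strong $L^2(\Omega_T\times Y)$ convergence of the unfolded sequence $h(\widetilde{v^\epsilon}\circ S^\epsilon)$, which follows from the Lipschitz bound together with Lemma \ref{strong1}. The only difference is that you spell out, via \eqref{equ}, \eqref{tra} and Proposition \ref{t-s}(\ref{pr}), the step the paper asserts in one line (that strong convergence of the unfolded sequence suffices for strong two-scale convergence), which is a harmless and welcome elaboration.
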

\begin{proof}	To prove \eqref{strong}, it is sufficient to show that
	$$	h(\widetilde{v^\epsilon}\circ S_\epsilon) \to h(\chi_{Y^\ast} v) \ \text{  in} \ L^2(\Omega_{T}\times Y).$$
	Since $h$ is a Lipschitz continuous function, there exists a constant $C_h$ such that
	\begin{equation*}
		\ds\Vert h(\widetilde{v^\epsilon}\circ S_\epsilon)-h(\chi_{Y^\ast} v) \Vert_{L^2(\Omega_{T}\times Y)}\leq C_h
		\ds\Vert \widetilde{v^\epsilon}\circ S_\epsilon-\chi_{Y^\ast} v \Vert_{L^2(\Omega_{T}\times Y)}.
	\end{equation*}
	Now, using Lemma \ref{strong1}, one gets
	\begin{equation*}
		\ds\lim_{\epsilon\to 0}\Vert h(\widetilde{v^\epsilon}\circ S_\epsilon)-h(\chi_{Y^\ast} v) \Vert_{L^2(\Omega_{T}\times Y)}=0,
	\end{equation*}
	that is \eqref{strong}. The proposition is proved.
\end{proof}

\begin{corollary}\label{cor}
	If $h : \mathbb{R} \to \mathbb{R}$ is a Lipschitz continuous function, then, \textcolor{blue}{up to a subsequence:}
	\begin{equation}\label{strong2}
		\chi_{\Omega^\epsilon}h(\widetilde{v^\epsilon}) \overset{2}{\to} \chi_{Y^\ast}h( v)\text{ strongly two-scale in} \ L^2(\Omega_{T}\times Y).
	\end{equation}
\end{corollary}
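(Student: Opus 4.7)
The plan is to derive Corollary~\ref{cor} directly from Proposition~\ref{prop} by absorbing the oscillating factor $\chi_{\Omega^\epsilon}$ into the admissible test function used in the definition of strong two-scale convergence.

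First I would unwind the definition of strong two-scale convergence recalled in Proposition~\ref{t-s}: fix an arbitrary sequence $w^\epsilon$ with $w^\epsilon \overset{2}{\rightharpoonup} w^0$ in $L^2(\Omega_T\times Y)$, together with a bounded admissible test function $\phi$. The task is to compute the limit of
$$ I_\epsilon := \int_{\Omega_T} \chi_{\Omega^\epsilon}(x)\, h(\widetilde{v^\epsilon})(t,x)\, w^\epsilon(t,x)\, \phi\Bigl(t,x,\frac{x}{\epsilon}\Bigr) \xdif x\, \xdif t . $$

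Second, using the definition of $\Omega^\epsilon$ given in Subsection~\ref{notata}, one has $\chi_{\Omega^\epsilon}(x) = \chi_{Y^\ast}(x/\epsilon)$ except on the union of cells $\epsilon(k+Y)$ intersecting $\partial\Omega$, a set of Lebesgue measure $O(\epsilon)$. Since $h$ is Lipschitz, the pointwise bound $|h(\widetilde{v^\epsilon})|\leq |h(0)|+C_h |\widetilde{v^\epsilon}|$ combined with Lemma~\ref{lem2} shows that $h(\widetilde{v^\epsilon})$ is uniformly bounded in $L^2(\Omega_T)$; the sequence $w^\epsilon$ is likewise bounded in $L^2$. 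A Cauchy--Schwarz estimate therefore controls the contribution of the boundary layer by a constant times $\epsilon^{1/2}$, hence
$$ I_\epsilon = \int_{\Omega_T} h(\widetilde{v^\epsilon})\, w^\epsilon\, \tilde\phi\Bigl(t,x,\frac{x}{\epsilon}\Bigr) \xdif x\, \xdif t + o(1), \qquad \tilde\phi(t,x,y) := \chi_{Y^\ast}(y)\, \phi(t,x,y) . $$
Since $\chi_{Y^\ast}\in L^\infty_\sharp(Y)$, the function $\tilde\phi$ is again bounded and admissible. Proposition~\ref{prop} applied with the sequence $w^\epsilon$ and the test function $\tilde\phi$ then yields
$$ \lim_{\epsilon\to 0} I_\epsilon = \int_{\Omega_T\times Y} h(\chi_{Y^\ast}(y)\, v)\, w^0\, \chi_{Y^\ast}(y)\, \phi(t,x,y)\, \xdif y\, \xdif x\, \xdif t . $$

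Third, I would exploit the pointwise identity $\chi_{Y^\ast}(y)\, h(\chi_{Y^\ast}(y) v) = \chi_{Y^\ast}(y)\, h(v)$, valid because $\chi_{Y^\ast}(y) = 1$ on $Y^\ast$ while the outer factor vanishes on $\mathcal{H}$. The limit above thus reads $\int_{\Omega_T\times Y} \chi_{Y^\ast}(y)\, h(v)\, w^0\, \phi\, \xdif y\, \xdif x\, \xdif t$, which is exactly the statement that $\chi_{\Omega^\epsilon}\, h(\widetilde{v^\epsilon})$ strongly two-scale converges to $\chi_{Y^\ast}\, h(v)$.

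There is no genuine obstacle beyond Proposition~\ref{prop}. The only mildly technical point is the boundary-layer estimate for $\chi_{\Omega^\epsilon} - \chi_{Y^\ast}(\cdot/\epsilon)$, which is standard in periodic homogenization on perforated domains and relies only on the $O(\epsilon)$-measure estimate of the boundary layer together with the uniform $L^2$ bounds established in Section~\ref{sec4}.
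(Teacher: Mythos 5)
Your overall route is viable and, in substance, close to the paper's: both arguments reduce the corollary to Proposition \ref{prop} through the identity \eqref{47} relating $\chi_{\Omega^\epsilon}$ to the periodic function $\chi_{Y^\ast}(\cdot/\epsilon)$, together with the pointwise identity $\chi_{Y^\ast}\,h(\chi_{Y^\ast}v)=\chi_{Y^\ast}\,h(v)$. The paper works on the unfolded side: since $\chi_{\Omega^\epsilon}\circ S^\epsilon=\chi_{Y^\ast}(y)$ by \eqref{47} and \eqref{tra}, the claim \eqref{strong2} becomes $\chi_{Y^\ast}\,h(\widetilde{v^\epsilon}\circ S^\epsilon)\to\chi_{Y^\ast}h(v)$ in $L^2(\Omega_T\times Y)$, obtained from Proposition \ref{prop} by a triangle inequality. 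You instead verify the defining product property of strong two-scale convergence (Proposition \ref{t-s}) directly, absorbing $\chi_{\Omega^\epsilon}$ into the test function $\tilde\phi=\chi_{Y^\ast}\phi$. That is legitimate, but your justification of the admissibility of $\tilde\phi$ is too quick: $\chi_{Y^\ast}\in L^\infty_\sharp(Y)$ alone does not make a function admissible (bounded periodic functions can fail Definition \ref{def}); what saves the argument is that $\partial\mathcal{H}$ is smooth, so $\chi_{Y^\ast}$ is admissible and so is its product with the continuous-in-$y$ test function $\phi$.

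The genuine gap is the boundary-layer estimate. Writing $B_\epsilon$ for the union of the cells meeting $\partial\Omega$, you must control $\int_{B_\epsilon}h(\widetilde{v^\epsilon})\,w^\epsilon\,\phi$ with $|B_\epsilon|=O(\epsilon)$, and Cauchy--Schwarz only gives the bound $\Vert\phi\Vert_{\infty}\Vert h(\widetilde{v^\epsilon})\Vert_{L^2(B_\epsilon)}\Vert w^\epsilon\Vert_{L^2}$. Since $w^\epsilon$ is an \emph{arbitrary} weakly two-scale convergent sequence, it may concentrate precisely on $B_\epsilon$, so the claimed $O(\epsilon^{1/2})$ rate would require $\Vert h(\widetilde{v^\epsilon})\Vert_{L^2(B_\epsilon)}\le C|B_\epsilon|^{1/2}$, i.e.\ a uniform $L^\infty$ bound on $h(\widetilde{v^\epsilon})$ that you do not have; a uniform $L^2$ bound restricted to a set of measure $O(\epsilon)$ gives nothing by itself. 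What you actually need is $\Vert h(\widetilde{v^\epsilon})\Vert_{L^2(B_\epsilon)}\to0$, which is true but must be derived from the equi-integrability of $|h(\widetilde{v^\epsilon})|^2$ --- available, for instance, from the strong $L^2(\Omega_T\times Y)$ convergence of $h(\widetilde{v^\epsilon}\circ S^\epsilon)$ established in the proof of Proposition \ref{prop} (via Lemma \ref{strong1}) combined with the measure-preserving property \eqref{tra} --- and not from mere boundedness. Alternatively, you may follow the paper, which in \eqref{47} simply identifies $\chi_{\Omega^\epsilon}$ with the periodic characteristic function so that no boundary term appears; your extra care about the mismatch near $\partial\Omega$ is welcome, but it then has to be justified along the lines above.
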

\begin{proof} 
	First, notice  that (see Corollary 1.2. of \cite{vin})
	\begin{equation}\label{47}
		\ds\chi_{\Omega^\epsilon}(x)=\chi\left(\frac{x}{\epsilon}\right)=\chi\left(\mathcal{R}\left(\frac{x}{\epsilon}\right)\right) .
	\end{equation}
	Hence proving \eqref{strong2} is equivalent to proving that
	\begin{equation*}
		\chi^\ast(y) h(\widetilde{v^\epsilon}(S_\epsilon(t,x,y)) \to \chi_{Y^\ast}(y)h( v(t,x,y)) \ \text{ in} \ L^2(\Omega_{T}\times Y).	
	\end{equation*}
	The latter results from using Proposition \ref{prop}  in the following decomposition  
	\begin{eqnarray*}
		&&		\ds\Vert \chi_{Y^\ast}h(\widetilde{v^\epsilon}\circ S_\epsilon)-\chi_{Y^\ast}h(v) \Vert_{L^2(\Omega_{T}\times Y)}\leq \Vert \chi_{Y^\ast}\Vert_{L^\infty(Y)}
		\ds\Vert h(\widetilde{v^\epsilon}\circ S_\epsilon)-h(\chi_{Y^\ast} v) \Vert_{L^2(\Omega_{T}\times Y)}
		\nonumber \\
		&& \qquad \qquad \quad
		+\Vert \chi_{Y^\ast}h(\chi_{Y^\ast}v)-\chi_{Y^\ast}h(v) \Vert_{L^2(\Omega_{T}\times Y)},
	\end{eqnarray*}
	where $\chi_{Y^\ast}h(\chi_{Y^\ast}v)=\chi_{Y^\ast}h(v)$.
\end{proof}
\subsection{Derivation of the homogenized problem}\label{passage}
In this subsection, we finally derive the homogenized system associated with \eqref{modelfit1}. We aim at proving that the limit functions defined in Subsection \ref{2s}  are ruled by the following problem, which  is actually a reformulation of Theorem \ref{main}:
\begin{equation}\label{modelfit100}
	\left\{\begin{array}{ll}
		\gamma\dep_t \m=\dvg(\A^{\ast}\nabla\m)-\theta_{c}(|\m|^{2}-1)\m-G(v)\m+\bar{\bs{\mu}}\nabla \varphi +\mathbf{H}_2\m&\mbox{in}~\Omega_{T}\\
		\dep_t v-\m\cdot\dep_t\m=\dvg(\K^{\ast}g\circ G(v)\nabla v))&\mbox{in}~\Omega_{T},\\
		\dvg(\bs{\mu}^{\ast}\nabla \varphi + \mathbf{H}_1\m)=0&\mbox{on}~(0,T)\times \R^3, \\
		\A^{\ast}\nabla\m\cdot\nu=0    &\mbox{on}~(0,T)\times\dep\Omega,  \\
		(\K^{\ast}g\circ G(v)\nabla v)\cdot\nu=0   &\mbox{on}~(0,T)\times\dep\Omega , \\
		(\bs{\mu}^{\ast}\nabla \varphi + \mathbf{H}_1\m)\cdot\nu=0   &\mbox{on}~(0,T)\times\dep\Omega,\\
		v(x,0)=\mathcal{M}_Y(\chi^\ast v_{0}(x)), \quad \m(x,0)=\overline\chi \m_{0}(x) &\mbox{in}~ \Omega, \\
	\end{array}\right.
\end{equation}
with $\A^\ast,\ \K^\ast,\ \bs{\mu}^\ast,\ \bar{\bs{\mu}},\ \mathbf{H}_1$ and $\mathbf{H}_2$
defined in Theorem \ref{main}.

\begin{proof} 
	We multiply the first and second equations of $(\ref{modelfit1})$ by $r^\epsilon(t,x)=r(t,x,x/\epsilon)$ and $\p^\epsilon(t,x)=p(t,x,x/\epsilon)$, respectively, where the test functions $r$ and $\p$ belong to $C^{\infty}(\overline{\Omega_{T}}) \otimes C^{\infty}_\sharp(Y)$. Additionally, we multiply the third equation of $(\ref{modelfit1})$ by $\Psi^\epsilon \in \mathcal{D}((0,T)\times\R^3) \otimes C^{\infty}_\sharp(Y)$. 
	Then,  integrating by parts and considering that
	\begin{eqnarray*}
		&& \nabla \p^\epsilon(t,x)=\nabla_x \p(t,x,x/\epsilon)+\epsilon^{-1}\nabla_y \p(t,x,x/\epsilon),\\
		&& \nabla r^\epsilon(t,x)=\nabla_x r(t,x,x/\epsilon)+\epsilon^{-1}\nabla_y r(t,x,x/\epsilon),\\
		&& \nabla \Psi^\epsilon(t,x)=\nabla_x \Psi(t,x,x/\epsilon)+\epsilon^{-1}\nabla_y \Psi(t,x,x/\epsilon),\\
	\end{eqnarray*}
	one gets 
	\begin{eqnarray*}
		&& \int_{\Omega_{T}}(\gamma\dep_t \M+\theta_{c}(|\M|^{2}-1)\M+G(v^{\epsilon})\M-\nabla \varphi^{\epsilon})\cdot\p^\epsilon \xdif x\xdif t
		\nonumber \\
		&& \qquad\qquad
		+\int_{\Omega^{\epsilon}_{T}}\A^{\epsilon}\nabla\M\cdot\nabla_{x} \p^\epsilon\xdif x\xdif t+\ds\frac{1}{\epsilon}\int_{\Omega^{\epsilon}_{T}}\A^{\epsilon}\nabla\M\cdot\nabla_{y} \p^\epsilon \xdif x\xdif t=0,
		\\
		&& \int_{\Omega^{\epsilon}_{T}}(\dep_{t}v^{\epsilon}-
		\M\cdot\dep_{t}\M)r^\epsilon \xdif x\xdif t+\ds\int_{\Omega^{\epsilon}_{T}}\K^{\epsilon}g(G(v^{\epsilon}))\nabla v^{\epsilon}\cdot\nabla_{x} r^\epsilon \xdif x\xdif t
		\nonumber \\
		&& \qquad\qquad
		+\frac{1}{\epsilon}\int_{\Omega^{\epsilon}_{T}}\K^{\epsilon}g(G(v^{\epsilon}))\nabla v^{\epsilon}\cdot\nabla_{y} r^\epsilon \xdif x\xdif t=0,
		\\
		&& \int_{\R^3\times(0,T)}[\bs{\mu}^\epsilon(\chi_{\R^{3}\setminus\overline{\Omega}}+\chi_{\Omega^\epsilon})\nabla \widetilde{\varphi^\epsilon }+\chi_{\Omega^\epsilon}\M]\cdot\Bigl[\nabla_x \Psi^\epsilon+\frac{1}{\epsilon}\nabla_y \Psi^\epsilon\Bigr] \xdif x\xdif t=0.
	\end{eqnarray*}
	Using the extension operator $P^\epsilon$ and noting that for any function $f$ defined at $0$,
	$$\ds \chi_{\Omega^\epsilon}P^\epsilon f(v^\epsilon)=\chi_{\Omega^\epsilon}f(P^\epsilon v^{\epsilon})=\chi_{\Omega^\epsilon}f(\widetilde{v^{\epsilon}}),$$
	the latter equations are transformed into
	\begin{equation}\label{equ11.0}
		\begin{array}{ll}
			\ds\int_{\Omega_{T}}(\gamma\dep_t\widetilde{\M} +\theta_{c}(|\widetilde{\M}|^{2}-1)\widetilde{\M}+ \chi_{\Omega^\epsilon}G(\widetilde{v^{\epsilon}})\widetilde{\M}-\chi_{\Omega^\epsilon}\nabla \varphi^\epsilon)\cdot\p\left(t,x,\frac{x}{\epsilon}\right) \xdif x\xdif t
			\\
			\qquad \quad  +\ds\int_{\Omega_{T}}\chi_{\Omega^\epsilon}\A^{\epsilon}\nabla P^\epsilon\M\cdot\nabla_{x} \p\left(t,x,\frac{x}{\epsilon}\right)\ \xdif x\xdif t
			\\
			\qquad \quad  +\ds\frac{1}{\epsilon}\int_{\Omega_{T}}\chi_{\Omega^\epsilon}\A^{\epsilon}\nabla P^\epsilon\M\cdot\nabla_{y} \p\left(t,x,\frac{x}{\epsilon}\right) \xdif x\xdif t=0,
		\end{array}
	\end{equation}
	\begin{equation}\label{equ10}
		\begin{array}{ll}
			\ds\int_{\Omega_{T}}(\dep_{t}\widetilde{v^{\epsilon}}-
			\widetilde{\M}\cdot\dep_{t}\widetilde{\M})r\Bigl(t,x,\frac{x}{\epsilon}\Bigr) \xdif x\xdif t
			\\
			\qquad \quad 
			+\ds\int_{\Omega_{T}}\chi_{\Omega^\epsilon}\K^{\epsilon}g(G(\widetilde{v^{\epsilon}}))\nabla P^\epsilon v^{\epsilon}\cdot\nabla_{x} r\Bigl(t,x,\frac{x}{\epsilon}\Bigr) \xdif x\xdif t\\
			\qquad \quad  +\ds\frac{1}{\epsilon}\int_{\Omega_{T}}\chi_{\Omega^\epsilon}\K^{\epsilon}g(G(\widetilde{v^{\epsilon}}))\nabla P^\epsilon v^{\epsilon}\cdot\nabla_{y} r\Bigl(t,x,\frac{x}{\epsilon}\Bigr) \xdif x\xdif t=0 ,
		\end{array}
	\end{equation}
	%
	%
	\begin{equation}\label{equ20}
		\begin{array}{ll}
			\ds\int_{\R^3\times(0,T)}[\chi_{\R^{3}\setminus\overline{\Omega}}\bs{\mu}^\epsilon\nabla \widetilde{\varphi^\epsilon }+\chi_{\Omega}(\chi_{\Omega^\epsilon}\bs{\mu}^\epsilon\nabla\widetilde{\varphi^\epsilon }+\widetilde{\M})]
			\\
			\qquad \quad \ds \cdot\Bigl[\nabla_x \Psi\Bigl(t,x,\frac{x}{\epsilon}\Bigr)+\frac{1}{\epsilon}\nabla_y \Psi\Bigl(t,x,\frac{x}{\epsilon}\Bigr)\Bigr] \xdif x\xdif t=0.
		\end{array}
	\end{equation}
	The demonstration now consists in three main steps:
	
	\medskip
	
	\noindent\textbf{Step 1.} The first stage is devoted to  the convergence analysis of the nonlinear terms, based on the transformation $S^\epsilon$.
	
	\noindent\textbf{Limit of the term $\theta_{c}(|\widetilde{\M}|^{2}-1)\widetilde{\M}$:} Our estimates are not sufficient for relying on Lebesgue's dominated convergence theorem, as Berti did in \cite{berti}. 
	We thus pave the way for using Vitali's convergence theorem.
	Using the transformation $S^\epsilon$ and the properties \eqref{tra} and \eqref{47}, we obtain:
	\begin{eqnarray}
		&&\!\!\!  \!\!\!  \int_{\Omega_{T}}\theta_{c}(|\widetilde{\M}|^{2}-1)\widetilde{\M} \cdot\p\Bigl(t,x,\frac{x}{\epsilon}\Bigr) 			\nonumber \\
		&&  \!\!\! \!\!\! 
		=\theta_{c}\int_{\Omega_{T}}\int_{Y}(|\widetilde{\M}\circ S^\epsilon|^{2}\widetilde{\M}\circ S^\epsilon\cdot\p(S^\epsilon,y) -\theta_{c}\int_{\Omega_{T}}\int_{Y}\widetilde{\M}\circ S^\epsilon \cdot\p(S^\epsilon,y)    . 
		\label{48}
	\end{eqnarray}
	Convergences $\eqref{CON}_2$ and \eqref{conu} lead straightforward to:
	\begin{equation}\label{CVD1}
		\ds \lim_{\epsilon\to 0}\theta_{c}\int_{\Omega_{T}}\int_{Y}\widetilde{\M}\circ S^\epsilon \cdot\p\left(S^\epsilon,y\right) =\theta_{c}\int_{\Omega_{T}}\int_{Y}\chi_{Y^\ast} (y)\m(t,x) \cdot\p\left(t,x,y\right)  .
	\end{equation}
	Additional arguments are required to calculate the asymptotic behavior of the second term in the right-hand side of \eqref{48}.
	First, the strong convergence result stated in Lemma \ref{strong1} implies that, \textcolor{blue}{ up to a subsequence,} $\widetilde{\M}\circ S^\epsilon$ also converges almost everywhere in $\Omega_T \times Y$ to $\chi_{Y^\ast}\m$.
	Then, \textcolor{blue}{up to the same subsequence,} $|\widetilde{\M}\circ S^\epsilon|^{2}\widetilde{\M}\circ S^\epsilon$ converges almost everywhere in $\Omega_T \times Y$ to $\chi_{Y^\ast}\vert \m\vert^2\m$, hence also converges in measure. 
	Next, $|\widetilde{\M}\circ S^\epsilon|^{2}\widetilde{\M}\circ S^\epsilon$ is uniformly bounded in $L^q(\Omega_T \times Y)$ for some $q>1$ (see \textcolor{blue}{\eqref{L4} in} Lemma \ref{lem11}: $\widetilde{\M} \circ S^\epsilon$ is bounded in $L^4(\Omega_T \times Y)$).  It follows that the sequence of functions $(|\widetilde{\M}\circ S^\epsilon|^{2}\widetilde{\M}\circ S^\epsilon)$ has uniformly absolutely continuous integrals. 
	Vitali's convergence theorem applies: 
	$$ |\widetilde{\M}\circ S^\epsilon|^{2}\widetilde{\M}\circ S^\epsilon \to \chi_{Y^\ast}\vert \m\vert^2\m \mbox{ in } L^1(\Omega_T \times Y).$$
	The latter result is sufficient for proving that
	$$
	\lim_{\epsilon \to 0}  \int_{\Omega_{T}} \int_{Y} \bigl( |\widetilde{\M} \circ S^\epsilon|^{2} \, \widetilde{\M} \circ S^\epsilon \cdot \p(S^\epsilon, y) \bigr) = \int_{\Omega_{T}} \int_{Y} \chi_{Y^\ast} \vert \m \vert^{2}  \m \cdot \p(t,x, y) .
	$$
	Inserting the latter result in \eqref{CVD1} gives
	\begin{equation}
		\lim_{\epsilon \to 0} \int_{\Omega_{T}} \theta_{c} \bigl( |\widetilde{\M}|^{2} - 1 \bigr) \, \widetilde{\M} \cdot \p\Bigl( t,x, \frac{x}{\epsilon} \Bigr) = \int_{\Omega_{T}} \int_{Y} \chi_{Y^\ast} \theta_{c} \left( \vert \m |^{2} - 1 \right) \, \m \cdot \p(t,x,  y) .
	\end{equation}

	\noindent\textbf{Convergence of the term $\chi_{\Omega^\epsilon}G(\widetilde{v^{\epsilon}})\widetilde{\M}$:} 
	The  computation relies on Proposition \ref{prop} and Corollary \ref{cor}. 
	Indeed, a direct computation gives $0< G'\le 1/c_2$ and the Lipschitz property for $G$ follows from  the Mean Value Theorem.
	Therefore, according to Corollary \ref{cor}, the following convergence hold:
	\begin{equation}\label{conv4}
		\chi_{\Omega^\epsilon}G(\widetilde{v^\epsilon}) \overset{2}{\to} \chi_{Y^\ast}G( v)\text{ strongly two-scale in} \ L^2(\Omega_{T}\times Y).
	\end{equation}
	Bearing in mind that $\widetilde{\M}$ also two-scale converges  to $\chi_{Y^\ast} \m$ in $L^2(\Omega_{T} \times Y)$, one infers from  \eqref{conv4} (using $\ds(\chi_{Y^\ast})^2=\chi_{Y^\ast}$) that:
	\begin{equation}
		\ds\lim_{\epsilon\to 0}\int_{\Omega_{T}} \chi_{\Omega^\epsilon}G(\widetilde{v^{\epsilon}})\widetilde{\M}\cdot\p\Bigl(x,t,\frac{x}{\epsilon}\Bigr)=\int_{\Omega_{T}}\int_{Y} \chi_{Y^\ast}G(v)\m\cdot\p\left(t,x,y\right) .
	\end{equation}

	\noindent\textbf{Convergence of the term $\widetilde{\M}\cdot\dep_{t}\widetilde{\M}$:} The passage to the limit in this term is obtained directly by the convergences $\eqref{CON}_2$ and $\eqref{CON}_3$, along with the transformation $S^\epsilon$. Therefore, we obtain:
	\begin{equation}\label{dept}
		\lim_{\epsilon\to 0}\int_{\Omega_{T}} (\widetilde{\M}\cdot\dep_{t}\widetilde{\M}) r(x,t,y) =\int_{\Omega_{T}}\int_{Y} \chi_{Y^\ast}(\m\cdot\dep_{t}\m) r(x,t,y) .
	\end{equation}

	\noindent\textbf{Step 2.} 
	This is the classical step of any homogenization process where corrector results and auxiliary problems are derived.
	We begin with the two equations \eqref{equ11.0} and \eqref{equ20} multiplied by $\epsilon$ for capturing the microscale oscillations. 
	By  using results of Step 1, particularly to pass to the limit in equation \eqref{equ11.0},  applying the transformation $S^\epsilon$, along with \eqref{47} and the convergences $\eqref{correc}_2$ and $\eqref{correc}_3$ and the convergence $\eqref{CON}_2$, one obtains:  
	for a.e. $(t,x) \in (0,T)\times\mathbb{R}^3$, for any $\bar{\p}$ and $\bar{\Psi}$ in $C^{\infty}_\sharp(Y)$, 
	\begin{eqnarray}\label{E1}
		&& \int_{Y^{\ast}}\A(\nabla_{x} \m+\nabla_{y} \m_1)\cdot\nabla_{y} \bar{\p}(y) \xdif y=0,
		\\
		\label{E2}
		&& \int_{Y}[ \bs{\mu}(\chi_{\R^{3}\setminus\overline{\Omega}}+\chi_{\Omega}\chi_{Y^\ast})(\nabla_x \varphi+\nabla_y \varphi_1)+\chi_{\Omega}\chi_{Y^\ast}\m]\cdot\nabla_y \bar{\Psi}(y) \xdif y=0.
	\end{eqnarray}
	Equation \eqref{E1} being the variational formulation of the following problem:
	\begin{equation*}
		\left\{
		\begin{array}{ll}
			\ds -\dvg_y(\A(\nabla_{x} \m+\nabla_{y} \m_1))=0 &\mbox{in } Y^{\ast},\\
			\ds\A(\nabla_{x} \m+\nabla_{y} \m_1)\cdot\nu=0& \mbox{on }\dep \mathcal{H},
		\end{array}
		\right.
	\end{equation*}
	\textcolor{blue}{where function $\m_1$ is $Y$-periodic and may be chosen such that $\ds\mathcal{M}_{Y}(\m_1)=0$.} It can be expressed in the  form
	\begin{equation}\label{m1}
		\ds \m_1(t,x,y)=\sum_{i=1}^{3}\omega_i(y)\dep_{x_i}\m(t,x),
	\end{equation}
	where $\omega_i \in H^1_\sharp(Y)$ are the solution of the following cell problem
	\begin{equation}\label{echm}
		\left\{
		\begin{array}{ll}
			\ds -\dvg_y(\A\nabla_{y}( \omega_i+y_i))=0 &\mbox{in } Y^{\ast},\\
			\ds\A\nabla_{y}( \omega_i+y_i)\cdot\nu=0& \mbox{on }\dep \mathcal{H},\\
			\ds\mathcal{M}_{Y}(\omega_i)=0, \ \omega_i \mbox{ is $Y$-periodic}.
		\end{array}
		\right.
	\end{equation}
	Next, equation \eqref{E2} is the variational formulation of the following problem:
	\begin{equation*}
		\left\{
		\begin{array}{ll}
			\ds -\dvg_y(\bs{\mu}(\chi_{\R^{3}\setminus\overline{\Omega}}+\chi_{\Omega}\chi_{Y^\ast})(\nabla_x \varphi+\nabla_y \varphi_1)+\chi_{\Omega}\chi_{Y^\ast}\m))=0 &\mbox{in } \R^3\times Y,\\
			\ds\chi_{\Omega}(\bs{\mu}(\nabla_x \varphi+\nabla_y \varphi_1) +\m)\cdot\nu=0& \mbox{on }\R^3\times\dep \mathcal{H},
		\end{array}
		\right.
	\end{equation*}
	where  $\varphi_1$ is $Y$-periodic. Consequently,  $\varphi_1$ can be expressed in the form
	\begin{equation}\label{phi}
		\ds\varphi_1(t,x,y)=\sum_{i=1}^{3} \bar{\omega}^{1}_i(y)\dep_{x_i} \varphi(t,x) +\bar{\omega}^{2}_i(\m(t,x)\cdot\e_i),
	\end{equation}
	where $\bar{\omega}^{k}_i \in H^{1}_\sharp(Y),\ k=1,2,\ i= 1,2,3$, are the solutions of the cell problems:
	\begin{equation}\label{omega1}
		\left\{
		\begin{array}{ll}
			\ds -\dvg_y(\bs{\mu}(\chi_{\R^{3}\setminus\overline{\Omega}}+\chi_{\Omega}\chi_{Y^\ast})\nabla_{y}( \bar{\omega}^{1}_i+y_i))=0 &\mbox{in } \R^3\times Y,\\
			\ds\chi_{\Omega}\bs{\mu}\nabla_{y}(\bar{\omega}^{1}_i+y_i)\cdot\nu=0& \mbox{on }\R^3\times \dep \mathcal{H},\\
			\ds\mathcal{M}_{Y}(\omega^{1}_i)=0
			, \ \omega^{1}_i \mbox{ is $Y$-periodic}.
		\end{array}
		\right.
	\end{equation}
	and 
	\begin{equation}\label{omega2}
		\left\{
		\begin{array}{ll}
			\ds -\dvg_y(\bs{\mu}\nabla_{y}( \bar{\omega}^{2}_i+y_i))=0 &\mbox{in }  Y^{\ast},\\
			\ds \bs{\mu}\nabla_{y}(\bar{\omega}^{2}_i+y_i)\cdot\nu=0& \mbox{on }\dep \mathcal{H},\\
			\ds\mathcal{M}_{Y}(\bar{\omega}^{2}_i)=0, \ \bar{\omega}^{2}_i \mbox{ is $Y$-periodic}.
		\end{array}
		\right.
	\end{equation}

	We conclude this step with the expression of the corrector $v_1$. 
	However, before passing to the limit in equation \eqref{equ10}, one needs some compactness result for handling the nonlinear term $ \chi_{\Omega^\epsilon} g(G(\widetilde{v^{\epsilon}})) \nabla P^\epsilon v^{\epsilon}$. 
	Using  the Mean Value Theorem, one checks that $g \circ G$ is Lipschitz.
	Hence Corollary \ref{cor} applies:
	$$	\chi_{\Omega^\epsilon}g\circ G(\widetilde{v^\epsilon}) \overset{2}{\to} \chi_{Y^\ast}g\circ G( v)\text{ strongly two-scale in} \ L^2(\Omega_{T}\times Y),$$
	which is equivalent to
	$$ \chi(y) g\circ G(\widetilde{v^\epsilon}\circ S_\epsilon) \to \chi_{Y^\ast}g\circ G( v) \ \text{  in} \ L^2(\Omega_{T}\times Y).$$
	Now, multiplying equation \eqref{equ20} by $\epsilon$,  using the previous result along with the convergences $\eqref{strong1}_1$, $\eqref{correc}_1$, and \eqref{dept}, one obtains:
	\begin{equation*}
		\ds\int_{\Omega_{T}}\int_{Y}\chi_{Y^\ast}\K g(G(v))(\nabla_x v+\nabla_y v_1)\cdot\nabla_{y} r(t,x,y) \xdif x\xdif y \xdif t=0.
	\end{equation*}
	Notice that $G\circ g(v) >0$ and does not depend on $y$. Similar computations \textcolor{blue}{as} before lead to express $v_1$ as
	\begin{equation}\label{v11}
		\ds v_1(t,x,y)=\sum_{i=1}^{3}\widehat{\omega}_i(y)\dep_{x_i} v(t,x),
	\end{equation}
	where $\widehat{\omega}_i\in H^1_\sharp(Y)$, $i=1,2,3,$ is the solution of the following cell problem: 
	\begin{equation}\label{omegav}
		\left\{
		\begin{array}{ll}
			\ds -\dvg_y(\K\nabla_{y}(\widehat{\omega}_i+y_i))=0 &\mbox{in } Y^{\ast},\\
			\ds \K\nabla_{y}(\widehat{\omega}_i+y_i)\cdot\nu=0& \mbox{on }\dep \mathcal{H},\\
			\ds\mathcal{M}_{Y}(\widehat{\omega}_i)=0, \ \widehat{\omega}_i \mbox{ is $Y$-periodic}.
		\end{array}
		\right.
	\end{equation}

	\noindent\textbf{Step 3.} Finally, we gather all the previous results for passing to the limit $\epsilon \to 0$  in equations \eqref{equ11.0}-\eqref{equ20}. 
	We pick test functions $\p^{\epsilon}=\p$, $r^{\epsilon}=r$ in $C^{\infty}(\overline{\Omega_{T}})$, and $\Psi^{\epsilon}=\Psi$ in $\mathcal{D}((0,T)\times\R^3)$. 
	We obtain at the limit
	\begin{eqnarray*}
		&& \int_{\Omega_{T}}(\gamma\dep_t\m +\theta_{c}(\m|^{2}-1)\m+ G(v)\m-\mathcal{M}_{Y^\ast}(\nabla_x \varphi+\nabla_x  \varphi_1) )\cdot\p\, \xdif x\xdif t
		\\
		&& \qquad		\qquad
		+\int_{\Omega_{T}}(\mathcal{M}_{Y^\ast}(\A(x,y)(\nabla_x \m+\nabla_y \m_1))\cdot\nabla \p\ \xdif x\xdif t=0,
		\\
		&& \int_{\Omega_{T}}(\dep_{t}v-
		\m\cdot\dep_{t}\m)r\, \xdif x\xdif t+\ds\int_{\Omega_{T}}\mathcal{M}_{Y^\ast}(\K(x,y) G(v)(\nabla v+\nabla_y v_1)) \cdot\nabla r\xdif x\xdif t=0,
		\\
		&& \int_{\R^3\times(0,T)}\Bigr[\int_{Y}(\chi_{\R^{3}\setminus\overline{\Omega}}+\chi_{\Omega}\chi^\ast)\bs{\mu}(x,y)(\nabla_x\varphi+\nabla_y\varphi_1)\xdif y+\chi_{\Omega}\bar{\chi}\m\Bigr]\cdot\nabla \Psi \xdif x\xdif t=0.
	\end{eqnarray*}
	Using the expressions \eqref{m1}, \eqref{phi} and \eqref{v11}, bearing in mind \eqref{convinit} for  the initial conditions, one recovers the homogenized system presented in \eqref{modelfit100}.
	According to \eqref{convinit}, the limit of the initial conditions are such that
	\begin{eqnarray*}
		&& \chi^\epsilon \widetilde \m_0^\epsilon \rightharpoonup \overline \chi \m_0 \mbox{ and }
		\chi^\epsilon \widetilde v_0^\epsilon \rightharpoonup \int_Y \chi^\ast v_0 \, dy \mbox{ weakly in } L^2(\Omega_T). 
	\end{eqnarray*}

	One turns back to the result announced in Theorem \ref{main}, by setting $\theta = G(v)$.
\end{proof}

\section{Conclusion}\label{conclusion}
	The Curie temperature of ferromagnetic materials is known to depend on several factors, including the precise material composition, the presence of impurities and the manufacturing process.  Porosity, including perforations, is another such factor.  In this article, we derive an effective model that enables the accurate calculation of the parameters governing the ferromagnetic-paramagnetic transition, explicitly accounting for the microscopic geometry and the magnetic permeability tensor. 
It is worth noticing that this explicit calculation, which can be done with standard numerical tools, is valid for any type of material and can avoid or be compared to the vast literature giving experimental results on the Curie temperature in nano-porous materials (we only quote \cite{shi} which is focused on the influence of geometric characteristics).
This is all the more important as we have demonstrated in this article that, in all generality, the concept of Curie temperature in a perforated material has to be replaced by a Curie temperature tensor which  essentially averages the coupling of macro- and micro-scales (even if we assume a micro-model with a scalar Curie temperature contrary to a more proper definition as in \cite{Cza}). While the assumption of periodicity might appear restrictive, studies of homogenization in other areas ({\it e.g.}, porous media flows) have demonstrated that our results can be extended to more general configurations, such as stochastic distributions (see \cite{Bourgeat}). Furthermore, the accuracy of the effective model can be assessed as a function of the parameter $\epsilon$, which characterizes the inclusion size (see, {\it e.g.}, work on correctors).  Future work will address similar analyses with other inclusion morphologies.\vskip6pt

\enlargethispage{20pt}



\vskip2pc


%
%
%

\end{document}